\newcommand{\E}{\mathbb{E}}
\newcommand{\Pro}{\mathbb{P}}
\newcommand{\no}{\noindent}
\numberwithin{equation}{section}
\begin{document}

	\newcommand{\bea}{\begin{eqnarray}}
		\newcommand{\ena}{\end{eqnarray}}
	\newcommand{\beas}{\begin{eqnarray*}}
		\newcommand{\enas}{\end{eqnarray*}}
	\newcommand{\beq}{\begin{align}}
		\newcommand{\enq}{\end{align}}
	\def\qed{\hfill \mbox{\rule{0.5em}{0.5em}}}
	\newcommand{\bbox}{\hfill $\Box$}
	\newcommand{\ignore}[1]{}
	\newcommand{\ignorex}[1]{#1}
	\newcommand{\wtilde}[1]{\widetilde{#1}}
	\newcommand{\mq}[1]{\mbox{#1}\quad}
	\newcommand{\bs}[1]{\boldsymbol{#1}}
	\newcommand{\qmq}[1]{\quad\mbox{#1}\quad}
	\newcommand{\qm}[1]{\quad\mbox{#1}}
	\newcommand{\nn}{\nonumber}
	\newcommand{\Bvert}{\left\vert\vphantom{\frac{1}{1}}\right.}
	\newcommand{\To}{\rightarrow}
	\newcommand{\supp}{\mbox{supp}}
	\newcommand{\law}{{\cal L}}
	\newcommand{\Z}{\mathbb{Z}}
	\newcommand{\mc}{\mathcal}
	\newcommand{\mbf}{\mathbf}
	\newcommand{\tbf}{\textbf}
	\newcommand{\lp}{\left(}
	\newcommand{\limm}{\lim\limits}
	\newcommand{\limminf}{\liminf\limits}
	\newcommand{\limmsup}{\limsup\limits}
	\newcommand{\rp}{\right)}
	\newcommand{\mbb}{\mathbb}
	\newcommand{\rainf}{\rightarrow \infty}
	\newtheorem{theorem}{Theorem}[section]
	\newtheorem{problem}[theorem]{Problem}
	\newtheorem{exercise}[theorem]{Exercise}
	\newtheorem{corollary}[theorem]{Corollary}
	\newtheorem{conjecture}[theorem]{Conjecture}
	\newtheorem{claim}[theorem]{Claim}
	\newtheorem{proposition}[theorem]{Proposition}
	\newtheorem{lemma}[theorem]{Lemma}
	\newtheorem{definition}[theorem]{Definition}
	\newtheorem{example}[theorem]{Example}
	\newtheorem{remark}[theorem]{Remark}
	\newtheorem{solution}[theorem]{Solution}
	\newtheorem{case}{Case}[theorem]
	\newtheorem{condition}[theorem]{Condition}
	\newtheorem{assumption}[theorem]{Assumption}
	\newtheorem{question}[theorem]{Question}
	\newtheorem{note}[theorem]{Note}
	\newtheorem{notes}[theorem]{Notes}	\newtheorem{observation}[theorem]{Observation}
	\newtheorem{readingex}[theorem]{Reading exercise}
	\frenchspacing

	\newenvironment{proofclaim}[1][Proof of claim]{\begin{proof}[#1]\renewcommand*{\qedsymbol}{\(\blacksquare\)}}{\end{proof}}
	
	\newcommand{\indicate}{\mathbbm{1}}

	\usetikzlibrary{positioning}
	\usetikzlibrary{decorations.markings}
	\usetikzlibrary{shapes.geometric}
	\usetikzlibrary{patterns}
	\usetikzlibrary{calc, babel}

	\tikzstyle{vertex}=[circle,fill=red!24,minimum size=20pt,inner sep=0pt]
	\tikzstyle{selected vertex} = [vertex, fill=red!24]
	\tikzstyle{edge} = [draw,thick,-]
	\tikzstyle{weight} = [font=\small]
	\tikzstyle{selected edge} = [draw,line width=5pt,-,red!50]
	\tikzstyle{ignored edge} = [draw,line width=5pt,-,black!20]
	
	\pgfdeclarelayer{edgelayer}
	\pgfdeclarelayer{nodelayer}
	\pgfsetlayers{edgelayer,nodelayer,main}
	
	\tikzstyle{none}=[inner sep=0pt]
	\definecolor{RED}{rgb}{1.000,0.000,0.000}
	\definecolor{ORANGE}{rgb}{1.000,0.700,0.000}
	\definecolor{BLUE}{rgb}{0.000,0.000,1.000}
	\definecolor{GREEN}{rgb}{0.000,0.700,0.300}
	\definecolor{GRAY}{rgb}{0.5,0.5,0.5}
	\definecolor{SHADEDGRAY}{rgb}{0.9,0.9,0.9}
	\definecolor{WHITE}{rgb}{1.000,1.000,1.000}
	\definecolor{BLACK}{rgb}{0.000,0.000,0.000}

	\tikzstyle{whitecircle}=[circle,fill=WHITE,draw=BLACK]
	\tikzstyle{whiterectangle}=[rectangle,fill=WHITE,draw=BLACK]
	\tikzstyle{blackcircle}=[circle,fill=BLACK,draw=BLACK]
	\tikzstyle{grayrectangle}=[rectangle,fill=GRAY,draw=BLACK]
	
	\tikzstyle{thinarrow}=[-latex,draw=BLACK,line width=0.700]
	\tikzstyle{thickedge}=[-,draw=BLACK,line width=2.500]
	
	\tikzstyle{REDARROW}=[-latex,draw=RED,line width=2.00]
	\tikzstyle{ORANGEARROW}=[-latex,draw=ORANGE,line width=2.00]
	\tikzstyle{BLUEARROW}=[-latex,draw=BLUE,line width=2.00]
	\tikzstyle{GREENARROW}=[-latex,draw=GREEN,line width=2.00]
	\tikzstyle{GRAYARROW}=[-latex,draw=GRAY,line width=2.00]
	\tikzstyle{BLACKARROW}=[-latex,draw=BLACK,line width=2.00]
	\tikzstyle{THINBLACKARROW}=[-latex,draw=BLACK,line width=1.00]


	\newcommand{\DELETE}[1]{\textcolor{yellow}{#1}}
	
	\newcommand{\ADD}[1]{\textcolor{red}{#1}}
	
	\newcommand{\MAY}[1]{\textcolor{blue}{Akif: #1}}

	\tikzstyle{level 1}=[level distance=2.75cm, sibling distance=5.65cm]
	\tikzstyle{level 2}=[level distance=3cm, sibling distance=2.75cm]
	\tikzstyle{level 3}=[level distance=3.9cm, sibling distance=1.5cm]
	
	\tikzstyle{bag} = [text width=10em, text centered] 
	\tikzstyle{end} = [circle, minimum width=3pt,fill, inner sep=0pt]

	\title{Path Decompositions of Oriented Graphs}
	\author{Viresh Patel\footnote{School of Mathematical Sciences, Queen Mary University of London, United Kingdom,  Email: viresh.patel@qmul.ac.uk. V.~Patel was partially supported by the Netherlands Organisation for Scientific Research (NWO)
			through the Gravitation project NETWORKS (024.002.003).} \hspace{0.2in} Mehmet Akif Yıldız\footnote{Centrum Wiskunde \& Informatica, Amsterdam, The Netherlands. Email: m.akif.yildiz@cwi.nl. M.A.~Yıldız was supported by a Marie Skłodowska-Curie Action from the EC (COFUND grant no. 945045) and by the NWO Gravitation project NETWORKS (grant no. 024.002.003). Part of this work was carried out during the tenure of an `ERCIM Alain Bensoussan' Fellowship Programme. }   } \vspace{0.2in}
	
	\maketitle
	\begin{abstract}
		We consider the problem of decomposing the edges of a digraph into as few paths as possible. A natural lower bound for the number of paths in any path decomposition of a digraph $D$ is $\frac{1}{2}\sum_{v\in V(D)}|d^+(v)-d^-(v)|$; any digraph that achieves this bound is called consistent. 
		Alspach, Mason, and Pullman \cite{Directed-Path-Tournaments} conjectured in 1976 that every tournament of even order is consistent and this was recently verified for large tournaments by Girão, Granet, Kühn, Lo, and Osthus \cite{Tournaments}. 
		A more general conjecture of Pullman \cite{CUBIC} states that for  odd $d$, every orientation of a $d$-regular graph is consistent. We prove that the conjecture holds for random $d$-regular graphs with high probability i.e. for fixed odd $d$ and as $n \to \infty$ the conjecture holds for almost all $d$-regular graphs. 
		Along the way, we verify Pullman's conjecture for graphs whose girth is sufficiently large (as a function of the degree).
		
		\bigskip
		
		\textbf{Keywords:} path decomposition, orientation, regular, consistent
		
	\end{abstract}
	
	\section{Introduction}\label{sec:intro}
	
	\no Decomposing an object into smaller parts that share some common structure is a frequent theme in mathematics. Many classical and fundamental problems in graph theory can be phrased as \textit{decomposition} problems, i.e.\ problems of partitioning the edge set of a graph into parts with certain properties, where the goal is usually to minimize the number of parts.  \\

	
	\no
	A famous result of Lov{\'a}sz~\cite{Gallai} says that every $n$-vertex graph can be decomposed into at most $\lfloor n/2 \rfloor$ paths and cycles. This result was inspired by a conjecture of Gallai (see~\cite{Gallai}) which says that every connected $n$-vertex graph can be decomposed into at most $\lceil n/2 \rceil$ paths, and this is sharp for complete graphs with an odd number of vertices. The conjecture remains open, where currently the best general bound is $\lfloor 2n/3 \rfloor$
	due to Dean and Kouider \cite{Gallai-2n-over-3-first} and independently Yan \cite{Gallai-2n-over-3-second}. The conjecture is also known to hold for several graph classes (see e.g.~\cite{Gallai-planar} and references therein). \\
	
	\no
	In this paper, we are interested in the problem of decomposing \textit{directed} graphs into as few \textit{paths} as possible.
	The obvious analogue of Gallai's conjecture for directed graphs is false because there are directed graphs that need many more than $\lceil n/2 \rceil$ paths to decompose them. For example $\lfloor n^2/4 \rfloor$ paths are needed to decompose the balanced complete bipartite digraph that sends all its edges from one vertex class to the other since each edge must be a path; in fact $\lfloor n^2/4 \rfloor$ paths are enough to decompose any digraph on at least four vertices as shown by O'Brien~\cite{Path-Decomposition-Upper-Bound} (building on work of Alspach and Pullman~\cite{Directed-Path-Decomposition}). However, whereas Gallai's conjecture is close to sharp for many graphs (e.g. graphs where all degrees are odd require at least $n/2$ paths to decompose them), $\lfloor n^2/4 \rfloor$ is far from sharp for most digraphs (e.g.\ a bound of $n^2/4$ is meaningless for sparse digraphs). Therefore one is interested in more subtle bounds for digraphs. \\

	\no In this direction, Alspach and Pullman \cite{Directed-Path-Decomposition} gave a natural lower bound on the number of paths needed to decompose a digraph in terms of the degrees of the vertices.
	Let $\mathrm{pn}(D)$ be the minimum number of paths required to decompose the edges of a digraph $D$. As observed in \cite{Directed-Path-Decomposition}, $\mathrm{pn}(D)$ can be bounded below by the the excess of $D$, which is defined to be
	\begin{align*}
		\mathrm{ex}(D):= \dfrac{1}{2}\sum_{v\in V(D)} |d^+(v) - d^-(v)|.
	\end{align*}  
	Indeed, we have $\mathrm{pn}(D) \geq \mathrm{ex}(D)$ because, for any path decomposition of $D$, the number of paths that start (resp.\ end) at a vertex $v$ is at least $\max(d^+(v) - d^-(v), 0)$ (resp.\ $\max(d^-(v) - d^+(v), 0)$), and summing this over all vertices counts each path exactly twice. 
	Any digraph satisfying $\mathrm{pn}(D)=\mathrm{ex}(D)$ is said to be \textit{consistent}. We say an undirected graph $G$ is \textit{strongly consistent} if every orientation of $G$ is consistent.\\
	
	\no 
	It is clear that not every digraph is consistent (e.g.~a directed cycle or any Eulerian digraph has excess zero), and in fact it is NP-complete to determine whether a digraph is consistent; see \cite{NP-complete-paper} or \cite{NP-complete-thesis}. 
	A conjecture of Alspach, Mason, and Pullman from 1976 states that every tournament of even order is consistent; in our terminology the conjecture  can be stated as follows. 	
	\begin{conjecture}[\cite{Directed-Path-Tournaments}]\label{conjecture:Tournaments}
		For each odd $d$, the complete graph on $d+1$ vertices is strongly consistent.
	\end{conjecture}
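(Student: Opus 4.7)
The plan is to proceed by induction on $d$, with trivial base case $d=1$. Since $n := d+1$ is even, for any vertex $v$ in a tournament $T$ on $n$ vertices we have $d^+(v)+d^-(v) = n-1$ odd, so $|d^+(v)-d^-(v)|$ is odd and at least $1$, giving $\mathrm{ex}(T) \geq n/2$. This parity observation is crucial: the excess is exactly attained at every vertex only if each $|d^+(v) - d^-(v)|$ is odd, which is automatic here.

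The key reformulation I would exploit is the standard equivalence between consistency and Eulerian extensions: $D$ is consistent iff one can add $\mathrm{ex}(D)$ ``virtual'' arcs, each directed from a vertex $u$ with $d^+(u) > d^-(u)$ to a vertex $v$ with $d^-(v) > d^+(v)$ (with multiplicities at each vertex matching its excess), so that every weakly connected component of the augmented digraph $D^+$ is Eulerian. Given such an extension, $D^+$ decomposes into edge-disjoint directed circuits; cutting each circuit at its virtual arcs yields exactly $\mathrm{ex}(D)$ directed paths in $D$. Under this reformulation, the problem splits cleanly into (i) a bipartite matching between positive- and negative-excess vertices, which is always solvable, and (ii) a global connectivity condition on $T^+$.

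The main strategy is then to pick any initial matching of virtual arcs and apply local swap operations: replacing two virtual arcs $u_1v_1, u_2v_2$ by $u_1v_2, u_2v_1$ preserves the vertex-balance condition while potentially merging weakly connected components. Tournaments are rich in such swaps because of their high connectivity (every strongly connected tournament is Hamiltonian, by Camion's theorem) and the abundance of arcs between any two large subsets. The hope is to show that these swaps suffice to merge all components of $T^+$ into one.

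The main obstacle, and the reason this conjecture has resisted resolution for decades, is handling the structured extremes uniformly. For quasi-random or ``well-mixed'' tournaments, there is enormous flexibility and an absorbing/regularity approach (as in Girão, Granet, Kühn, Lo and Osthus) works for large $n$. But near-transitive tournaments concentrate excess at a few vertices, leaving very little combinatorial room for swaps; here one needs tailored constructions that exploit the order structure to find Hamiltonian-like paths between prescribed endpoints. I expect that making the swap argument work down to small $n$, rather than just asymptotically, will require a delicate stability-type dichotomy separating the quasi-random and near-transitive regimes, and this is where the real work lies.
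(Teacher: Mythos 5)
This is stated in the paper as a \emph{conjecture}, not a theorem: the paper attributes it to Alspach, Mason and Pullman and notes it was verified only for large $d$ by Gir\~ao, Granet, K\"uhn, Lo and Osthus via robust expansion, so there is no proof in the paper to compare against, and you yourself concede at the end that "the real work lies" in a regime you do not handle. So the proposal is not a proof; at most it is a plan.

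Beyond that, the central reformulation you rely on is incorrect. Adding $\mathrm{ex}(D)$ virtual arcs from minus-excess to plus-excess vertices so that every weakly connected component of $D^+$ is Eulerian, and then cutting an Euler circuit at the virtual arcs, does \emph{not} yield $\mathrm{ex}(D)$ directed paths: it yields edge-disjoint \emph{trails}, which may repeat vertices. Consistency requires a decomposition into genuine (simple) directed paths, and minimizing the number of trails is a fundamentally easier problem than minimizing the number of paths. Indeed, the existence of such an Eulerian extension can be tested in polynomial time (it reduces to a degree-constrained bipartite subgraph plus a connectivity check on components), whereas the paper points out that deciding consistency is NP-complete. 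If your claimed equivalence were true, these complexities would coincide, a contradiction. So the "clean split into (i) bipartite matching and (ii) a connectivity condition" is not a reformulation of the conjecture, and the swap-based merging of weak components, even if carried out, would only produce a trail decomposition. To salvage the approach you would need an additional (and genuinely hard) step converting a circuit decomposition of $D^+$ into one whose segments between consecutive virtual arcs are vertex-simple, and that is precisely where the difficulty of the problem lives.
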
	
	
	\no Girão, Granet, Kühn, Lo, and Osthus \cite{Tournaments} recently resolved Conjecture~\ref{conjecture:Tournaments} for large $d$, building on earlier work of the second author with Lo, Skokan, and Talbot~\cite{Tournaments-Viresh}. In fact, Conjecture~\ref{conjecture:Tournaments} is a generalisation of Kelly’s conjecture about decomposing regular tournaments into Hamilton cycles, which was solved by Kühn and Osthus~\cite{Kelly-Proof} for large tournaments using their robust expanders technique.
	The following conjecture attributed to Pullman (see \cite{CUBIC}) considerably generalizes Conjecture~\ref{conjecture:Tournaments} from complete graphs to all regular graphs.
	
	\begin{conjecture}\label{conjecture:Regular}
		For odd $d$, every $d$-regular graph is strongly consistent.
	\end{conjecture}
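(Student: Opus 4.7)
The plan is to combine a classical Eulerian-extension reduction with an induction on the degree $d$, supplemented by probabilistic and expansion-based techniques for the boundary cases. First, given an orientation $D$ of a connected $d$-regular graph $G$, I would construct the balanced multidigraph $D^{\ast}$ by adjoining a dummy vertex $v_0$ with $d^{+}(v)-d^{-}(v)$ arcs from $v_0$ to each source $v$ and $d^{-}(v)-d^{+}(v)$ arcs from each sink $v$ to $v_0$. Then $D^{\ast}$ is weakly connected and balanced at every vertex, hence admits an Euler tour; cutting the tour at its $\mathrm{ex}(D)$ passes through $v_0$ gives a decomposition of $E(D)$ into exactly $\mathrm{ex}(D)$ trails. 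This settles the \emph{trail} version of Pullman's conjecture and reduces the path version to choosing the Euler tour --- equivalently, a transition system pairing in-arcs with out-arcs at each vertex --- so that every resulting trail is a simple directed path.

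To produce such a transition system, the second step is an induction on $d$. The base case $d=1$ is immediate since the oriented edges of a perfect matching realize $\mathrm{ex}(D)=n/2$. For the inductive step with odd $d\ge 3$, extract a $2$-factor $F\subseteq G$ (for instance by removing a perfect matching from the bridgeless case and applying Petersen's theorem to the resulting $(d-1)$-regular graph) and apply the inductive hypothesis to $(G\setminus F,\, D|_{G\setminus F})$, producing a path decomposition $\mathcal{P}$ with $\mathrm{ex}(D|_{G\setminus F})$ paths. Then absorb the oriented $2$-factor $D|_F$ into $\mathcal{P}$: $D|_F$ splits into directed cycles and monotone source-to-sink arcs; the monotone arcs can be appended to paths of $\mathcal{P}$ at shared endpoints, and each directed cycle of $F$ must be spliced into some path of $\mathcal{P}$. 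The odd-$d$ hypothesis is crucial here, as it forces every vertex to be a $D$-source or $D$-sink and so every vertex of every directed cycle of $F$ is an endpoint of some path in $\mathcal{P}$, giving many candidate splicing points.

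For the boundary cases where the $2$-factor/splicing scheme cannot be carried out --- cubic graphs with bridges, or graphs where every $2$-factor yields too many mutually obstructing directed cycles --- I would invoke the robust-expanders machinery of \cite{Kelly-Proof} and the absorbing method of \cite{Tournaments} for the dense regime, together with the large-girth and random $d$-regular graph arguments developed later in this paper for the sparse regime.

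The main obstacle is making the splicing step rigorous: incorporating a directed cycle $C$ of $F$ into a path $P \in \mathcal{P}$ without breaking the simple-path property is delicate because the naive detour from $P$ into $C$ creates a walk that revisits the splicing vertex. A workable splicing lemma must identify, for each directed cycle of $F$, a pair of consecutive cycle-vertices and a nearby path that can be locally re-routed to absorb $C$ without collision, and this appears to demand a structural theorem about how directed cycles of a $2$-factor interact with a $(d-2)$-regular path decomposition. It is precisely this local-to-global combinatorial obstacle, together with the gap between the dense and sparse regimes above, that has kept Conjecture~\ref{conjecture:Regular} open since 1976.
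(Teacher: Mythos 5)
The statement you are addressing is Conjecture~\ref{conjecture:Regular}, which the paper does \emph{not} prove: it is attributed to Pullman, explicitly described as ``wide open'' beyond $d\in\{1,3\}$, and the paper only establishes it in two special cases, namely for random $d$-regular graphs (\Cref{thm:RANDOM-REGULAR}) and for $d$-regular graphs with girth at least $200d^2$ (\Cref{corollary:main}). Your proposal, by your own admission in its final paragraph, also does not close the conjecture. So there is no proof in the paper to compare against; what follows is an assessment of where your sketch actually breaks, beyond the splicing difficulty you flag yourself.

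The central unacknowledged gap is that the 2-factor induction does not respect the excess functional. Writing $a_v = d^+_{D|_F}(v) - d^-_{D|_F}(v)$ and $b_v = d^+_{D|_{G\setminus F}}(v) - d^-_{D|_{G\setminus F}}(v)$, one has $\mathrm{ex}_D(v) = \tfrac12|a_v+b_v| \le \tfrac12(|a_v|+|b_v|)$ with equality only when $a_v$ and $b_v$ share a sign. Summing over $v$, it is entirely possible that $\mathrm{ex}(D) < \mathrm{ex}(D|_{G\setminus F})$, in which case the inductive hypothesis already hands you \emph{too many} paths and no amount of appending arcs or splicing cycles can lower the count; you would need to merge paths of $\mathcal{P}$, which is exactly the hard part. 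Relatedly, your claim that odd $d$ ``forces every vertex to be a $D$-source or $D$-sink'' is false; it only forces $d^+(v)\ne d^-(v)$. And being a plus vertex of $D$ does not make $v$ an endpoint of a path in the inductive decomposition $\mathcal{P}$ of $D|_{G\setminus F}$, since $v$ may be excess-zero or a minus vertex there --- so the ``many candidate splicing points'' need not exist. Finally, invoking the robust-expander machinery of \cite{Kelly-Proof,Tournaments} for the residual cases is not a licensed move: as the paper notes, that machinery does not apply in the sparse setting. Where the paper does make progress, it uses a quite different local absorption: choose a maximal cycle family $\mathcal{C}$ whose cycles have no chords into the acyclic remainder (\Cref{prop:CYCLE-SELECTION}), take a perfect path decomposition of that acyclic remainder, and for each cycle find two tangent plus-minus subpaths with correct signs (\Cref{prop:KEY}) so that cycle plus two subpaths rewrites as two paths --- absorbing one cycle at a time with no global excess bookkeeping of the kind your induction would require.
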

	
	\no We note that Conjecture~\ref{conjecture:Regular} is immediate for $d=1$, and was verified for $d=3$ by Reid and Wayland \cite{CUBIC}, but is wide open beyond that. (Note that Eulerian digraphs show that Conjecture~\ref{conjecture:Regular} cannot hold for even $d$.)
	Our main result is to verify Conjecture~\ref{conjecture:Regular} for random regular graphs.
	
	\begin{theorem}\label{thm:RANDOM-REGULAR}
		For each fixed odd $d$, a uniformly random $n$-vertex, $d$-regular graph is strongly consistent with probability tending to $1$ as $n \to \infty$.    
	\end{theorem}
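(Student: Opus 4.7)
The plan proceeds in two stages: first establish Pullman's conjecture for $d$-regular graphs of girth at least $g = g(d)$ (the intermediate result announced in the abstract), then transfer this to random $d$-regular graphs using the fact that such graphs are locally tree-like outside a small and well-separated exceptional set of short cycles.

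For the large-girth stage, fix an orientation $D$ of a $d$-regular graph $G$ of girth $\geq g$. Since $d$ is odd, every vertex is either a source ($d^+ > d^-$) or a sink ($d^- > d^+$), so $\mathrm{ex}(D) \geq n/2$. I would build the decomposition iteratively: repeatedly extract a directed source-to-sink path $P$ of length less than $g/2$, remove it from $D$, and continue until the remaining digraph is balanced. A directed trail of length less than $g/2$ is automatically a path, since any vertex revisit would create a closed subwalk of length less than $g$, contradicting the girth. Existence of such a short source-to-sink path should follow from a Moore-bound argument: the ball of radius $g/2$ around any source contains $\Theta(d^{g/2})$ vertices arranged as a tree, and the source/sink dichotomy together with the tree orientation should force a greedy directed walk from the source to meet a sink within $g/2$ steps. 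Each extraction reduces $\mathrm{ex}(D)$ by exactly one, so after $\mathrm{ex}(D)$ steps no edges remain.

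For the random-graph stage, use the standard fact that for fixed $d \geq 3$ and any fixed $g$, a uniformly random $d$-regular graph $G_{n,d}$ satisfies, with high probability, that its cycles of length less than $g$ are bounded in number, pairwise vertex-disjoint, and mutually at graph distance at least (say) $10g$. Thus $G_{n,d}$ decomposes into a bulk of girth $\geq g$ together with $O(1)$ well-separated exceptional regions, each of size depending only on $d$ and $g$. Given any orientation $D$, apply the large-girth result to the bulk and handle each exceptional region via a direct local analysis (only finitely many local configurations arise for fixed $d$ and $g$), then stitch the decompositions together at the boundary.

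The principal obstacle is the large-girth stage: proving that a short source-to-sink path always exists at every iteration. The iteratively modified digraph may develop pockets of sources without nearby sinks, potentially obstructing the greedy approach, and one must also ensure the iteration does not destroy the large-girth structure. Overcoming this may require a more global construction, such as an absorption argument, or tracking a finer parameter (e.g.\ the local distribution of imbalances) along the iteration. A secondary obstacle is the stitching step in the random-graph stage: ensuring the bulk and local decompositions meet at boundary vertices without introducing extra paths requires some flexibility at the boundary, which must be built into the large-girth decomposition up front.
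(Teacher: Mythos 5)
Your two-stage plan — first a large-girth theorem, then a transfer to random regular graphs via local tree-likeness — matches the paper's high-level strategy, but the execution you describe in Stage~1 has a fundamental gap that goes beyond the obstacles you flag, and your Stage~2 differs substantially from what the paper actually does.

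In Stage~1, the claim that ``after $\mathrm{ex}(D)$ steps no edges remain'' is false. Removing $\mathrm{ex}(D)$ plus-minus paths does reduce the excess to zero, but a digraph with zero excess need not be edgeless — it is an Eulerian digraph and can contain many cycles. In fact, this residual Eulerian part is exactly the crux of the problem; dealing with it is the whole content of the theorem, and the paper's Section~3 is devoted to an absorption mechanism for it. The paper does not iterate at all: it splits $D$ into a \emph{maximal} cycle family $\mathcal{C}$ plus an acyclic digraph $A$ (chosen via Proposition~\ref{prop:CYCLE-SELECTION} so that no cycle has a chord in $A$), takes a perfect path decomposition of $A$ (automatic for acyclic digraphs, Proposition~\ref{prop:acyclic-consistent}), and then absorbs each $C\in\mathcal{C}$ into two of these paths; the large-girth hypothesis, Hall's theorem, and the Lov\'asz Local Lemma (Propositions~\ref{prop:HALL-CONSEQUENCE} and~\ref{prop:MATCHING}) are used to make this absorption possible. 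Separately, your Moore-bound heuristic for locating a short source-to-sink path is unsound: one can orient the tree-like ball of radius $r<g/2$ around a vertex entirely ``outward,'' making every vertex inside a source (out-degree at least $d-1 \geq 2$), so no source-to-sink path of length $\leq r$ exists. You correctly identify this as a potential obstacle, but it is a fatal one, not something fixable by a finer greedy tracking.

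In Stage~2, the ``bulk plus finitely many exceptional regions, analyze each and stitch'' plan is not what the paper does and is genuinely hard to make work, since the correct number of paths is a global quantity and the path endpoints in an exceptional region must match the bulk decomposition exactly. The paper instead (Theorem~\ref{thm:discrete-orientation}) carefully selects for each short cycle $C_i$ one plus-minus path $P_i$ that passes through an edge of $C_i$ — destroying all short cycles at once — while ensuring that the endpoints of the chosen paths, which become excess-zero vertices in the residual digraph, form a $k$-sparse set; then the strengthened large-girth theorem~\ref{theorem:KEY} (which tolerates a sparse set of excess-zero vertices) applies to the residual. Building these paths requires the expansion estimate of Proposition~\ref{prop:PM(D)-relating-SD(v)-V0(D)} together with the $(n,d,p)$-discreteness properties. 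So the ``flexibility at the boundary'' you correctly ask for is realized not by a separate local analysis but by an integrated choice of paths that removes the exceptional structure before the large-girth machinery is invoked.
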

	
	\no Secondly, we verify Conjecture~\ref{conjecture:Regular} for graphs with no short cycles. Write $g(G)$ for the girth of a graph $G$.
	
	\begin{theorem}\label{corollary:main}
		For odd $d$, every $d$-regular graph $G$ with $g(G)\geq 200d^2$ is strongly consistent.
	\end{theorem}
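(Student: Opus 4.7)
The plan is to construct, for any orientation $D$ of $G$, a transition system at the vertices that yields a decomposition of $D$ into exactly $\mathrm{ex}(D)$ simple paths. Specifying, at each vertex $v$, a matching $M_v$ between in-edges and out-edges of size $\min(d^+(v),d^-(v))$ gives a decomposition of $E(D)$ into walks: matched pairs at $v$ are transitions of walks through $v$, while the remaining $|d^+(v)-d^-(v)|$ unmatched edges are walk endpoints. Any such system automatically produces exactly $\mathrm{ex}(D)$ open walks, possibly accompanied by closed trails and by walks that revisit vertices. The goal is to choose $\{M_v\}$ so that there are no closed trails and every open walk is a simple path.

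The main tools are two local switching operations. First, if the current matching yields a closed trail $C$, pick any vertex $v\in V(C)$; since $d$ is odd, $v$ has an unmatched edge (as $|d^+(v)-d^-(v)|\geq 1$) and is therefore the endpoint of some open walk $P$. Swapping a transition of $C$ at $v$ with the unmatched endpoint of $P$ absorbs $C$ into $P$, eliminating the closed trail while preserving the open-walk count. Second, if an open walk $W$ revisits some vertex $v$, swapping the two $W$-transitions at $v$ splits $W$ into a shorter walk plus a new closed trail, which can then be re-absorbed by the first operation. Starting from any initial matching, I would alternate these two operations until all walks are simple paths.

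The main obstacle is termination. A naive iteration can oscillate: absorbing a closed trail $C$ into a walk $P$ at a vertex $v$ makes the combined walk revisit $v$, potentially introducing new revisits and further collisions wherever $C$ and $P$ share other vertices. This is where the girth bound $g(G)\geq 200d^2$ enters crucially. Every closed trail contains a cycle of $G$, so every closed trail (and every closed subwalk of a non-simple walk) has length at least $200d^2$. I would design a potential function — for instance, one lexicographically ordered by the multiset of lengths of non-simple walks and closed trails, or a weighted count of revisits — and show that by choosing the absorption vertex $v$ carefully (for example, to minimize $|V(P)\cap V(C)|$ or to lie in a tree-like neighbourhood), each iteration strictly decreases the potential. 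The tree-like local structure afforded by the girth bound controls the number of possible collisions between $P$ and $C$ in balls of radius less than $100d^2$, yielding the quantitative estimate needed to guarantee progress; the factor $d^2$ accounts for nested pairings of in/out-edges at individual vertices, and the constant $200$ provides the slack required across iterations.
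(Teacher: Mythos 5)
Your transition-system framework (pair in-edges with out-edges at each vertex to decompose $E(D)$ into $\mathrm{ex}(D)$ open walks plus closed trails, then iteratively repair defects via local switches) is a genuinely different route from the paper's, which strips out a maximal chord-free cycle family $\mathcal{C}$, decomposes the acyclic remainder into $\mathrm{ex}(D)$ paths, and then performs a \emph{one-shot} absorption of each cycle into two tangent plus--minus paths with same-sign contact points, selected globally via Hall's theorem and the Lov\'asz Local Lemma. But the heart of your argument --- termination of the alternating switch process --- is not established, and your sketch does not close the gap. Operation~1 (absorbing a closed trail $C$ into an open walk $P$ at a shared endpoint $v$) deliberately creates a revisit at $v$, and simultaneously creates a revisit at \emph{every other} vertex of $V(P)\cap V(C)$. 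Operation~2 (splitting a non-simple walk at a revisited vertex) re-creates a closed trail, the very defect operation~1 removed; indeed applying operation~2 at $v$ right after operation~1 returns the previous configuration. So neither the number of closed trails, nor the number of non-simple walks, nor the total number of revisits is monotone under the two moves, and an explicit decreasing potential is required but never exhibited.

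The girth condition does not supply one in the way you suggest. It guarantees that every closed trail and every closed subwalk has length at least $200d^2$, but it gives no control on $|V(P)\cap V(C)|$: an open walk $P$ may be arbitrarily long and can meet a cycle $C$ at up to $|V(C)|$ vertices, and these intersection points are not confined to any ball of radius $100d^2$ around the absorption vertex. Absorbing $C$ into $P$ can therefore create $\Theta(|V(C)|)$ new revisits scattered far along $P$, so the ``tree-like local structure'' and the proposed lexicographic potential do not obviously yield progress. The paper sidesteps exactly this issue by engineering \emph{tangency} before absorbing: each absorbing path meets its cycle in precisely one vertex, and the two contact points per cycle are chosen with the same sign and with globally distinct path endpoints, so the reroute produces two genuine simple paths with no residual defects and no iteration is needed. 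Your approach could plausibly be salvaged by imposing a similar tangency condition before applying operation~1, but as written the scheme is not shown to terminate.
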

	
	\no In fact, our results are more general. 
	We prove that for any graph (not necessarily regular), every orientation without short cycles is consistent, provided that $d^+(v) \not= d^-(v)$ for every vertex $v$ (see \Cref{theorem:LARGE-GIRTH}). Then \Cref{corollary:main} follows as an immediate corollary. Furthermore, we show that it is possible to allow some vertices satisfying $d^+(v) = d^-(v)$, as long as they are sufficiently far apart from each other (see \Cref{theorem:KEY}). \\
	
	
	\no While the robust expanders technique was used heavily in the previous related works \cite{Tournaments, Kelly-Proof, Tournaments-Viresh} mentioned above, it is not applicable in our setting of sparse graphs. 
	Instead, we develop a new absorption idea tailored to the sparse setting of Pullman's Conjecture.

	\subsection*{Further related work}
	\no Path and cycle decomposition problems are among the most widely studied problems in extremal graph theory. Such problems go as far back as the 19th century with Walecki \cite{Walecki} showing that the edges of a complete graph of odd order can be decomposed into Hamilton cycles. There are many beautiful problems and results in the area, and we mention only a few of them first for graphs and then for digraphs. \\
	
	\no 
	We have already mentioned Gallai's conjecture about decomposing graphs into paths. For cycles, Erdős and Gallai~\cite{Erdos-Gallai} conjectured that any $n$-vertex graph can be decomposed into $O(n)$ cycles and edges. There has been extensive work on this conjecture~(see e.g.~references in \cite{Towards-Erdos-Gallai}) including recent progress of Conlon, Fox and Sudakov~\cite{Conlon-Fox-Sudakov} giving a bound of $O(n \log\log n)$ cycles and currently the best bound of $O(n\log^* n)$ cycles due to Bucić and Montgomery \cite{Towards-Erdos-Gallai} (here $\log^* n$ is the iterated logarithm function). Another important problem, due to Akiyama, Exoo and Harary~\cite{Arboricity}, is the \textit{Linear Arboricity Conjecture}, which asks for a decomposition of a $d$-regular graph into at most $\frac{d+1}{2}$ linear forests (a linear forest is a vertex disjoint union of paths). Again, this conjecture has received considerable attention; we mention here only that it is known to hold asymptotically as shown by Alon \cite{Alon-Arboricity}, and currently the best  {bounds are} $\frac{d}{2} + 3\sqrt{d} \log^4 d$  due to Lang and Postle \cite{Arboricity-Lang-Postle}, and $\frac{d}{2}+O(\log n)$ due to  Christoph, Draganić, Girão, Hurley, Michel, and Müyesser \cite{NEW-LINEAR-ARBORICITY}, where the comparison between the two best bounds depends on whether $d=\Omega(\log^2 n)$ or not. In a related direction, one can ask for a path decomposition in which path lengths are specified; for example Kotzig~\cite{KOTZIG} asked which $d$-regular graphs (for odd $d$) can be decomposed into paths of length $d$. Recently Montgomery, Müyesser, Pokrovskiy, and Sudakov~\cite{ALP} have made progress on this problem by showing all $d$-regular graphs can be almost decomposed into paths of length roughly $d$.
	\\
	
	\no In the directed setting, we have already mentioned the long-standing conjecture of Kelly (now settled for large tournaments), which eventually led to Conjectures~\ref{conjecture:Tournaments} and~\ref{conjecture:Regular} a few years later. 
	A directed analogue of the   {Erdős-Gallai} Conjecture says that any directed Eulerian graph\footnote{Note that the  {Erdős-Gallai} Conjecture is equivalent to the conjecture that every Eulerian graph can be decomposed into $O(n)$ cycles since every graph can be written as an edge-disjoint union of an Eulerian graph and a forest.  {Here we use a slightly general definition for Eulerian (directed) graphs; we say a graph (resp.~directed graph) $G$ is Eulerian if $d(v)$ is even (resp.~$d^+(v)=d^-(v)$) for all $v\in V(G)$.}} can be decomposed into $O(n)$ cycles (in fact an exact number of cycles was conjectured by Jackson~\cite{JACKSON-DECOMPOSITION} and later updated 
	by Dean~\cite{DEAN-DECOMPOSITION}.) The best bound here is $O(n \log n)$ \cite{EULERIAN-DIRECTED-LOG-DELTA}.
	A directed analogue of the linear arboricity conjecture was first mentioned by Nakayama and Peroch\'e \cite{Directed-Arboricity-First} who conjectured that every $d$-regular digraph can be decomposed into at most $d+1$ linear diforests. He, Li, Bai, and Sun~\cite{Directed-Arboricity-Complete} observed that the conjecture does not hold for complete digraphs on three or five vertices, but conjectured that these are the only exceptions. The conjecture is known to be true for digraphs with large girth due to  Alon \cite{Directed-Arboricity-Alon}, and for complete digraphs due to He, Li, Bai, and Sun~\cite{Directed-Arboricity-Complete}, but not much is known beyond this. There does not seem to be any formulation of Kotzig's question in the directed setting, but it is worth noting that while Kotzig's question for (even) cliques asks to decompose the clique into Hamilton paths, the obvious analogue in the oriented setting would ask to decompose even tournaments that are almost regular into Hamilton paths; this is a much harder problem which is in fact equivalent to Kelly's conjecture.
	
	\subsection*{The basic idea}	
	
	Given an orientation $D$ of some graph (with some further properties), we wish to decompose $D$ into $\mathrm{ex}(D)$ paths. By iteratively removing cycles from $D$ until it is no longer possible, we can write $D$ as the edge-disjoint union $D = E \cup A$, where $E$ is an Eulerian digraph and $A$ is an acyclic digraph.  In fact, $\mathrm{ex}(A) = \mathrm{ex}(D)$ (and $\mathrm{ex}(E) = 0$) because each removal of a cycle preserves $|d^+(v) - d^-(v)|$ for every vertex $v$. Furthermore, it is easy to show that any acyclic digraph is consistent simply by iteratively removing maximal paths and observing that the excess of the digraph decreases by one with each removal. Putting all this together, we see that $D$ can be decomposed into an Eulerian digraph $E$ together with a family $\mathcal{P}$ of $\mathrm{ex}(D) = \mathrm{ex}(A)$ paths. While we have the correct number of paths, not all of the edges (namely those in $E$) have been covered by these paths. We know $E$ can be decomposed into a set of cycles $\mathcal{C}$ and the challenge now is to absorb each cycle in $\mathcal{C}$ into some of the paths in $\mathcal{P}$. More precisely, for each cycle $C \in \mathcal{C}$, we try to find paths $P_1, \ldots, P_k \in \mathcal{P}$ so that $C \cup P_1 \cup \cdots \cup P_k$ can be rewritten as the edge-disjoint union of $k$ new paths $P_1' \cup \cdots \cup P_k'$; in this way the total number of paths remains $\mathrm{ex}(D)$.
	By repeating this, we eventually use up all the edges in $E$ and end up with the correct number (i.e.\ $\mathrm{ex}(D)$) of paths.\\
	
	\no In order to make the absorption step work, we must choose $\mathcal{P}$ and 
	$\mathcal{C}$ carefully. In 
	fact $\mathcal{P}$ (as described above) is not mentioned explicitly in our proofs and is constructed somewhat 
	indirectly. On the other hand, $\mathcal{C}$ is given explicitly. One might expect that it is desirable to 
	have as few cycles as possible in $\mathcal{C}$ so that less absorption is needed; however, surprisingly, we actually maximize the number of cycles in $\mathcal{C}$ because it 
	gives us some structural control over the cycles.

	\section{Preliminaries}\label{sec:preliminaries}
	Throughout the paper, we use standard graph theory notation and terminology. For $k\in\mathbb{N}$, we sometimes denote the set $\{1,2,\ldots,k\}$ by $[k]$. Throughout, a \textit{digraph} $D$ is a directed graph without loops where, for any distinct vertices $u$ and $v$ in $D$, there are at most two edges between $u$ and $v$, at most one in each direction. A digraph $D$ is an \textit{oriented} graph if it has at most one edge between any two distinct vertices, i.e.\ $D$ can be obtained by orienting the edges of an undirected graph. For a (di)graph $G$, we denote the vertex set and the edge set of $G$ by $V(G)$ and $E(G)$, respectively. For $a,b\in V(G)$, we write $ab$ for the (directed) edge from $a$ to $b$. For an oriented graph $D$, we denote the \textit{underlying undirected graph of $D$} by $D^\diamond$. We say an oriented graph $D$ is a \textit{tournament} if $D^\diamond$ is a complete graph, that is $ab\in E(D^\diamond)$ for every distinct $a,b\in V(D^\diamond)$. We write $H\subseteq G$ to mean $H$ is a \textit{sub(di)graph} of the (di)graph $G$, that is $V(H)\subseteq V(G)$ and $E(H)\subseteq E(G)$. The \textit{(di)graph induced} by $F\subseteq E(G)$, denoted by $G[F]$, is the sub(di)graph of $G$ with vertex set consisting of those vertices incident to edges in $F$ and with edge set $F$. Similarly, the \textit{(di)graph induced} by $S\subseteq V(G)$, denoted by $G[S]$, is the sub(di)graph of $G$ with the vertex set $S$ and edge set consisting of those edges which have both endpoints in $S$. For simplicity, we sometimes write $F$  instead of $G[F]$. A subset $S\subseteq V(G)$ is called an \textit{independent set} if $G[S]$ has no edges. For a collection $\mathcal{Q}$ of (di)graphs, we write $V(\mathcal{Q})=\bigcup_{Q\in \mathcal{Q}}V(Q)$ and $E(\mathcal{Q})=\bigcup_{Q\in \mathcal{Q}}E(Q)$. For a collection $\mathcal{Q}$ of sub(di)graphs of $G$, we write $G-\mathcal{Q}$ for the (di)graph obtained from $G$ by deleting all the edges in $E(\mathcal{Q})$. We say a graph $G$ is \textit{bipartite with bipartition $(A,B)$} if $V(G)=A\cup B$ and $E(G)\subseteq \{ab:a\in A, b\in B\}$, and we indicate $G$ is such a bipartite graph by writing $G=G(A,B)$.\\
	
	\no \textbf{Neighborhoods and degrees:} For a graph $G$ and a vertex $v\in V(G)$, $N_G(v)$ denotes the set of \textit{neighbours} of $v$, that is the set of vertices $w\in V(G)$  for which $wv\in E(G)$. We denote the \textit{degree} of a vertex $v$ by $d_G(v)$, that is $d_G(v)=|N_G(v)|$. Similarly, for a digraph $D$ and $v\in V(D)$, let $N_{D}^{+}(v)$ and $N_{D}^{-}(v)$ denote the set of \textit{outneighbours} and \textit{inneighbours} of $v$, that is the set of vertices $w\in V(D)$ for which $vw\in E(D)$ and $wv\in E(D)$, respectively. We denote the \textit{outdegree} and \textit{indegree} of $v$ by $d_{D}^{+}(v)$ and $d_{D}^{-}(v)$, respectively, that is, $d_{D}^{+}(v)=|N_{D}^+(v)|$ and $d_{D}^{-}(v)=|N_{D}^-(v)|$. For $S\subseteq V(D)$, we write $N_{D}^+(S):=\bigcup_{v\in S}N^+(v)$ and $N_{D}^-(S):=\bigcup_{v\in S}N^-(v)$. We often omit subscripts when they are clear from the context. For directed graphs, we define $d(v):=d^+(v)+d^-(v)$ as the \emph{total degree} of $v$. The \textit{minimum degree} of a graph $G$ is $\delta(G):=\min\{d(v):v\in V(G)\}$. Similarly, the \textit{maximum degree} of $G$ is $\Delta(G):=\max\{d(v):v\in V(G)\}$. For a digraph $D$, the \textit{minimum semi-degree} of $D$ is $\delta^0(D):=\min\{ \min\{d^+(v),d^-(v)\}:v\in V(D)\}$, and, the \textit{maximum semi-degree} of $D$ is defined as $\Delta^0(D):=\max\{ \max\{d^+(v),d^-(v)\}:v\in V(D)\}$. We say a graph (resp. digraph) $G$ is \textit{$k$-regular} if $d(v)=k$ (resp. $d^{+}(v)=d^{-}(v)=k$) for all vertices $v\in V(G)$. \\ 
	
	\no \textbf{Paths and cycles:} A \textit{directed path} $P$ in a digraph $D$ is a subdigraph of $D$ such that $V(P)=\{v_1,\ldots,v_k\}$ for some $k\in\mathbb{N}$ and $E(P)=\{v_1v_2,v_2v_3,\ldots,v_{k-1}v_k\}$. We denote such a directed path by its vertices in order, i.e.~we write $P=v_1v_2\ldots v_k$. The \textit{length} of a path $P$ is denoted by $|P|$, that is the number of edges in $P$. A \textit{directed cycle} in $D$ is the same except that it also includes the edge $v_kv_1$. We sometimes omit the word ``directed" and only say path or cycle if it is obvious from the context. We write $\ell(P)$ and $r(P)$ for the starting and ending point of a path $P$, that is $\ell(P)=v_1$ and $r(P)=v_k$. We sometimes treat an edge $ab$ as a path with $\ell(P)=a$ and $r(P)=b$. For a directed path $P=v_1v_2\ldots v_k$ and $1\leq i<j\leq k$, we write $v_iPv_j$ for the directed path $v_iv_{i+1}\ldots v_{j-1}v_j$, that is the unique directed path from $v_i$ to $v_j$ along $P$. Similarly, for a directed cycle $C$ and $x,y\in V(C)$, we write $xCy$ for the unique directed path from $x$ to $y$ along $C$. For two directed paths $P$ and $P'$ with $\ell(P)=a$, $r(P)=\ell(P')=b$, $r(P')=c$, {and $V(P)\cap V(P')=\{b\}$,} we write $aPbP'c$ for the directed path that is obtained by \textit{gluing} paths $P$ and $P'$ at the vertex $b$. For a (di)graph $G$ and $x,y\in V(G)$, we denote the \textit{distance} from $x$ to $y$ by $\mathrm{dist}_G(x,y)$, that is the length of a shortest path $P$ in $G$ with $\ell(P)=x$ and $r(P)=y$. For a (di)graph $G$, the length of the shortest (directed) cycle is called \textit{girth}, and denoted by $g(G)$. We say a (di)graph $G$ is \textit{acyclic} if it has no (directed) cycle, and in that case, we set $g(G)$ to be infinity. A collection of edge-disjoint (directed) paths (resp. cycles) $\mathcal{Q}$ in a (di)graph $G$ is called a \textit{path family} (resp. \textit{cycle family}) in $G$. The definitions of \textit{path} and \textit{cycle} extend to graphs in the obvious ways. \\
	
	
	\no The following are non-standard definitions but will be used frequently in the paper.\\
	
	\no \textbf{Tangent and incident paths:}  We say a path $P$ is \textit{incident} to a vertex $v$ if $v\in \{\ell(P),r(P)\}$. We say $P$ is \textit{incident} to a cycle $C$ (at $v$) if $E(C)\cap E(P)=\emptyset$ and there exists $v\in V(C)$ such that $P$ is incident to $v$. We say $P$ is \textit{tangent} to $C$ if either $V(P)\cap V(C)=\{\ell(P)\}$ or $V(P)\cap V(C)=\{r(P)\}$.  \\
	
	\no We now discuss some basic results we require. We postpone the preliminaries on random graphs to the start of \Cref{sec:random-regular}. The first proposition is a standard application of Hall's theorem.
	
	\begin{proposition}\label{prop:HALL-CONSEQUENCE}
		Let $G=G(A,B)$ be a bipartite graph. Assume there exists a positive integer $t$ such that $d(a)\geq t\cdot d(b)$  for all $a\in A$ and $b\in B$. Then, there exists a collection $\{R_a: a\in A\}$ of pairwise disjoint sets such that $R_a\subseteq N(a)$ and $|R_a|=t$ for all $a\in A$.
	\end{proposition}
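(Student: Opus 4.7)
The plan is to reduce to Hall's marriage theorem by a standard blow-up. Form an auxiliary bipartite graph $G'=G'(A',B)$ where each $a \in A$ is replaced by $t$ copies $a^{(1)}, \ldots, a^{(t)}$, each adjacent to $N_G(a)$. A matching in $G'$ saturating $A'$ yields, for each $a \in A$, a set $R_a$ of $t$ distinct vertices in $N_G(a)$, with all $R_a$ pairwise disjoint because the copies are matched to distinct vertices of $B$. So it suffices to produce such a matching via Hall's condition on $G'$.

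The main step is verifying Hall's condition. Given $S' \subseteq A'$, let $S = \{a \in A : \text{some copy of } a \text{ lies in } S'\}$. Then $N_{G'}(S') = N_G(S)$ and $|S'| \leq t|S|$, so it is enough to show $|N_G(S)| \geq t|S|$ for every $S \subseteq A$. This is where the degree hypothesis enters by double counting. Setting $D^{*} := \max_{b \in B} d_G(b)$ (and handling the trivial case $D^{*}=0$, for which the hypothesis forces $d(a)=0$ for all $a\in A$ and hence the statement is vacuous unless $A=\emptyset$), the assumption $d_G(a) \geq t \cdot d_G(b)$ for all $a\in A, b\in B$ gives $d_G(a) \geq t D^{*}$ for every $a\in A$. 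Then
\begin{align*}
t D^{*} |S| \;\leq\; \sum_{a\in S} d_G(a) \;=\; e_G(S, N_G(S)) \;\leq\; \sum_{b\in N_G(S)} d_G(b) \;\leq\; D^{*} \cdot |N_G(S)|,
\end{align*}
and dividing by $D^{*}$ yields $|N_G(S)| \geq t|S|$, as required.

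With Hall's condition verified, Hall's theorem produces a matching in $G'$ saturating $A'$; reading off, for each $a\in A$, the vertices of $B$ matched to $a^{(1)}, \ldots, a^{(t)}$ gives the desired sets $R_a$. There is no real obstacle here: the only subtlety is the degenerate case $D^{*}=0$, but this is easy to dispose of separately, and the heart of the argument is the one-line double counting shown above.
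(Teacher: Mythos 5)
Your proof is correct and follows essentially the same route as the paper's own (commented-out) argument: replace each $a\in A$ by $t$ copies, observe that the resulting bipartite graph satisfies Hall's condition, and read off the sets $R_a$ from a saturating matching. The only difference is cosmetic: the paper simply notes that after the blow-up every vertex of $A'$ has degree at least that of every vertex of $B$ and invokes the standard corollary of Hall's theorem, whereas you unwind that corollary and carry out the double-counting verification of Hall's condition explicitly. Your remark about the degenerate case $D^{*}=0$ is a fair observation (the statement as written is actually false if $A\neq\emptyset$ and $G$ has no edges, though this never arises in the paper's applications, where $d(a)>0$); the paper's sketch glosses over it in the same way.
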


	\no We will use the following form of Lovász Local Lemma.
	\begin{lemma}[see e.g.~\cite{LocalLemma}]\label{prop:LOCAL-LEMMA}
		Let $\{A_i:i\in [t]\}$ be a set of events such that each event occurs with probability at most $p$ and such that each event is independent of all the other events except for at most $d$ of them. If $epd\leq 1$, then there is a positive probability that none of the events occurs.    
	\end{lemma}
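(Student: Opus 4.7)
The plan is to prove the symmetric Lovász Local Lemma by induction on the size of a conditioning set, following the classical route. For each $i \in [t]$, let $D_i \subseteq [t]\setminus\{i\}$ denote the indices $j$ such that $A_i$ is not independent of $A_j$; by hypothesis $|D_i| \leq d$. The key inductive claim I would establish is that for every $i \in [t]$ and every $S \subseteq [t]\setminus\{i\}$,
\[
\Pr\!\left(A_i \,\middle|\, \bigcap_{j \in S} \bar{A}_j \right) \leq \kappa
\]
for a suitable constant $\kappa$ matched to the hypothesis (under $epd \leq 1$ the natural target is $\kappa = ep$). The base case $S = \emptyset$ is immediate from $\Pr(A_i) \leq p \leq \kappa$.

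For the inductive step I would split $S = S_1 \cup S_2$ with $S_1 = S \cap D_i$ and $S_2 = S \setminus D_i$, and write
\[
\Pr\!\left(A_i \,\middle|\, \bigcap_{j \in S} \bar{A}_j \right) = \frac{\Pr\!\left(A_i \cap \bigcap_{j \in S_1} \bar{A}_j \,\middle|\, \bigcap_{j \in S_2} \bar{A}_j \right)}{\Pr\!\left(\bigcap_{j \in S_1} \bar{A}_j \,\middle|\, \bigcap_{j \in S_2} \bar{A}_j \right)}.
\]
The numerator is at most $\Pr(A_i \mid \bigcap_{j \in S_2} \bar{A}_j) = \Pr(A_i) \leq p$, since $A_i$ is independent of the events indexed by $S_2 \subseteq [t]\setminus(D_i \cup \{i\})$. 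For the denominator I apply a chain-rule decomposition over $j \in S_1$; by the inductive hypothesis (each appearing conditional probability has a strictly smaller conditioning set), every factor is at least $1 - \kappa$, so the denominator is at least $(1-\kappa)^{|S_1|} \geq (1-\kappa)^d$. Hence the ratio is at most $p/(1-\kappa)^d$, and the hypothesis $epd \leq 1$ is precisely what guarantees this bound is itself at most $\kappa$, closing the induction.

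With the uniform bound $\Pr(A_i \mid \bigcap_{j \in S}\bar{A}_j) \leq \kappa < 1$ in hand, the desired conclusion follows by telescoping:
\[
\Pr\!\left(\bigcap_{i=1}^{t} \bar{A}_i \right) = \prod_{i=1}^{t} \Pr\!\left(\bar{A}_i \,\middle|\, \bigcap_{j<i} \bar{A}_j \right) \geq (1-\kappa)^t > 0.
\]
The main subtlety is confirming that $\kappa$ can be chosen so that $p/(1-\kappa)^d \leq \kappa$ is implied by $epd \leq 1$; this reduces to the standard estimate $(1 - 1/d)^d \geq 1/e$ (suitably interpreted). Once the correct value of $\kappa$ is identified the rest is routine manipulation of conditional probabilities, and no features of the probability space beyond the independence assumption are used.
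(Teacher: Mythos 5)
The paper does not actually prove this lemma; it cites Shearer~\cite{LocalLemma}, and the constant $epd\leq 1$ (with $d$ the dependency degree, not $d+1$) is sharper than what the textbook induction yields. Your inductive step does not close under this hypothesis. With $\kappa=ep$, closing the step requires $p/(1-ep)^d\leq ep$, i.e.\ $(1-ep)^d\geq 1/e$; when $epd=1$ this reads $(1-1/d)^d\geq 1/e$, which is \emph{false} for every finite $d$, since $(1-1/d)^d$ increases to $1/e$ from below. Concretely, $d=2$ and $p=1/(2e)$ give $p/(1-ep)^2=2/e\approx 0.74>1/2=ep$. The inequality you actually have at your disposal is $(1+1/d)^d\leq e$, equivalently $\bigl(1-\tfrac{1}{d+1}\bigr)^{d}\geq 1/e$, and with the matching choice $\kappa=1/(d+1)$ the induction closes exactly under $ep(d+1)\leq 1$, not $epd\leq 1$.

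Nor can the scheme be rescued by a cleverer $\kappa$: the step needs $p\leq\kappa(1-\kappa)^d$ for some $\kappa\in(0,1)$, and $\max_{\kappa\in(0,1)}\kappa(1-\kappa)^d=d^d/(d+1)^{d+1}$, which is strictly less than $1/(ed)$ because $(1+1/d)^{d+1}>e$. So under $epd\leq 1$ no fixed $\kappa$ works. The lemma as stated is nevertheless true --- Shearer's optimal threshold $p<(d-1)^{d-1}/d^d$ for dependency degree $d\geq 2$ strictly exceeds $1/(ed)$ --- but reaching it requires Shearer's sharper analysis, not the textbook induction. The honest fixes are either to prove the weaker $ep(d+1)\leq 1$ form (which in fact suffices for the paper's sole application in \Cref{prop:MATCHING}, where the check $8e/25<1$ has comfortable slack) or to cite Shearer, as the paper does. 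The rest of your argument --- the split of $S$ into $S_1\cup S_2$, the use of mutual independence from $\{A_j:j\in S_2\}$ to bound the numerator by $p$, the chain rule on the denominator, and the final telescoping --- is the standard and correct scaffolding.
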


	\subsection*{Path decomposition in digraphs}\label{subsec:Path-Decomposition}
	\noindent For a digraph $D$, we say a path family $\mathcal{P}$ is a \textit{path decomposition} for $D$ if $\bigcup_{P\in \mathcal{P}}E(P)=E(D)$. Using this notation, the \textit{path number} of $D$ is given by 
	\begin{align*}
		\mathrm{pn}(D):=\min\{r:\text{there exists a path decomposition }\mathcal{P}\text{ of }D\text{ with }|\mathcal{P}|=r\}.    
	\end{align*}
	For a vertex $v\in V(D)$, recall that the \textit{excess} of $v$ is
	\begin{align*}
		\mathrm{ex}_D(v):=\max\{\mathrm{ex}_D^+(v),\mathrm{ex}_D^-(v)\},
	\end{align*}
	where $\mathrm{ex}_D^+(v)=\max\{d_D^{+}(v)-d_D^{-}(v),0\}$ and $\mathrm{ex}_D^-(v)=\max\{d_D^{-}(v)-d_D^{+}(v),0\}$. The \textit{excess} of $D$ is
	\begin{align*}
		\mathrm{ex}(D):=\sum_{v\in V(D)}\mathrm{ex}_D^+(v)=\sum_{v\in V(D)}\mathrm{ex}_D^-(v)=\dfrac{1}{2}\sum_{v\in V(D)}\mathrm{ex}_D(v).
	\end{align*} We say $v$ is a \textit{plus} (resp. \textit{minus}) vertex if $ex^+(v)>0$ (resp. $ex^-(v)>0$). Let $V^{+}(D)$ and $V^{-}(D)$ be the set of plus and minus vertices in $D$, respectively. Similarly, let $V^0(D):=\{v\in V(D):\mathrm{ex}(v)=0\}$ be the set of \textit{excess-zero} vertices. We say a path $P$ is a \textit{plus-minus path (in $D$)} if $\ell(P)\in V^+(D)$ and $r(P)\in V^-(D)$. For convenience, we state the observation discussed in \Cref{sec:intro} formally, and expand on it. 
	
	\begin{observation}[see~\cite{Directed-Path-Decomposition}]\label{prop:basic}
		Let $D$ be a digraph, and $\mathcal{P}$ be any path decomposition for $D$. Then, for all $v\in V(D)$, we have $|\{P\in \mathcal{P}:\ell(P)=v\}|\geq ex^+(v)$ and $|\{P\in \mathcal{P}:r(P)=v\}|\geq ex^-(v)$, which in particular implies that $|\mathcal{P}|\geq \mathrm{ex}(D)$. Moreover, if $|\mathcal{P}|=\mathrm{ex}(D)$, then we have
		\begin{enumerate}
			\item[(i)] $|\{P\in \mathcal{P}:\ell(P)=v\}|= ex^+(v)$ and $|\{P\in \mathcal{P}:r(P)=v\}|= ex^-(v)$ for all $v\in V(D)$;
			\item[(ii)] Every $P\in \mathcal{P}$ is a plus-minus path in $D$.
		\end{enumerate}
	\end{observation}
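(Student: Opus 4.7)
The plan is to track, for each vertex $v$ and each path $P \in \mathcal{P}$, how $P$ contributes to $d_D^+(v)$ and $d_D^-(v)$. Write $s(v) := |\{P \in \mathcal{P} : \ell(P) = v\}|$ and $e(v) := |\{P \in \mathcal{P} : r(P) = v\}|$, and let $p(v)$ denote the number of paths in $\mathcal{P}$ that pass through $v$ (i.e.\ with $v \in V(P)$). Since $\mathcal{P}$ is an edge decomposition of $D$, every out-edge at $v$ belongs to exactly one path in $\mathcal{P}$, and a path containing $v$ contributes one out-edge at $v$ unless it ends at $v$. This gives $d_D^+(v) = p(v) - e(v)$, and symmetrically $d_D^-(v) = p(v) - s(v)$, hence the key identity $d_D^+(v) - d_D^-(v) = s(v) - e(v)$.

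From this identity, if $d_D^+(v) \geq d_D^-(v)$ then $s(v) - e(v) = \mathrm{ex}^+(v) \geq 0$, so $s(v) \geq \mathrm{ex}^+(v)$ and trivially $e(v) \geq 0 = \mathrm{ex}^-(v)$; the opposite case is symmetric. Summing $s(v) \geq \mathrm{ex}^+(v)$ over all $v \in V(D)$ and using that each $P \in \mathcal{P}$ has exactly one starting vertex gives
\[
|\mathcal{P}| \;=\; \sum_{v \in V(D)} s(v) \;\geq\; \sum_{v \in V(D)} \mathrm{ex}^+(v) \;=\; \mathrm{ex}(D).
\]

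For the moreover part, assume $|\mathcal{P}| = \mathrm{ex}(D)$. Then the chain of inequalities above must be tight, forcing $s(v) = \mathrm{ex}^+(v)$ for every $v$, and an identical argument with $e(v)$ gives $e(v) = \mathrm{ex}^-(v)$ for every $v$; this proves (i). For (ii), if $P \in \mathcal{P}$ with $\ell(P) = v$, then $s(v) \geq 1$, so by (i) we have $\mathrm{ex}^+(v) \geq 1$ and $v \in V^+(D)$; symmetrically $r(P) \in V^-(D)$, making $P$ a plus-minus path. The whole argument is bookkeeping, and the only subtle point to be careful about is correctly handling how a path containing $v$ contributes to $d_D^\pm(v)$ depending on whether $v$ is internal or an endpoint; no substantive obstacle is expected.
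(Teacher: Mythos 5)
Your proof is correct and matches the argument sketched in the paper's introduction (the paper itself does not spell out a proof of this observation, attributing it to Alspach and Pullman). The key identity $d_D^+(v)-d_D^-(v)=s(v)-e(v)$, the derivation of $s(v)\ge \mathrm{ex}^+(v)$ and $e(v)\ge \mathrm{ex}^-(v)$, the summation $|\mathcal{P}|=\sum_v s(v)\ge \mathrm{ex}(D)$, and the tightness argument for (i) and (ii) are all handled correctly, so no issues.
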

	
	\no We say a path family $\mathcal{P}$ is a \textit{perfect path decomposition} for $D$ if $\mathcal{P}$ is a path decomposition for $D$ with $|\mathcal{P}|=\mathrm{ex}(D)$. A digraph that admits a perfect path decomposition is said to be \textit{consistent}. Not all graphs are consistent, e.g.~a directed cycle has excess zero, but it is easy to see that every acyclic digraph is consistent by successively removing maximal paths.
	
	\begin{proposition}[{\cite[Theorem~4]{Directed-Path-Decomposition}}]\label{prop:acyclic-consistent}
		Any acyclic digraph is consistent.    
	\end{proposition}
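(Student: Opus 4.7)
The lower bound $\mathrm{pn}(D) \geq \mathrm{ex}(D)$ holds in full generality by \Cref{prop:basic}, so the task reduces to constructing, for an arbitrary acyclic digraph $D$, a path decomposition of $D$ of size exactly $\mathrm{ex}(D)$. I will do this by induction on $|E(D)|$. The base case $|E(D)| = 0$ is trivial: in this case $\mathrm{ex}(D) = 0$ and the empty family is a perfect path decomposition of $D$.

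For the inductive step, the key step is to extract a single path $P$ from $D$ such that the digraph $D' := D - P$ (which is again acyclic, since deleting edges preserves acyclicity) satisfies $\mathrm{ex}(D') = \mathrm{ex}(D) - 1$. Once this is done, the induction hypothesis provides a perfect path decomposition $\mathcal{P}'$ of $D'$, and $\mathcal{P}' \cup \{P\}$ is a path decomposition of $D$ of size $\mathrm{ex}(D') + 1 = \mathrm{ex}(D)$, as required.

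To produce such a $P$, take any maximal directed path $P = v_1 v_2 \cdots v_k$ in $D$, meaning one that cannot be extended at either end. Since $D$ is acyclic, $v_1$ has no in-neighbour in $V(P)$ (such an edge would create a directed cycle), and by maximality $v_1$ has no in-neighbour outside $V(P)$ either. Therefore $d_D^-(v_1) = 0$ and $d_D^+(v_1) \geq 1$, so $v_1 \in V^+(D)$; symmetrically $v_k \in V^-(D)$. Deleting the edges of $P$ decreases $d^+(v_1)$ by exactly $1$ while leaving $d^-(v_1) = 0$ unchanged, so $\mathrm{ex}_{D'}^+(v_1) = \mathrm{ex}_D^+(v_1) - 1$; similarly $\mathrm{ex}_{D'}^-(v_k) = \mathrm{ex}_D^-(v_k) - 1$. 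For each internal vertex $v_i$ with $1 < i < k$, the path $P$ uses one incoming and one outgoing edge at $v_i$, so both $d^+(v_i)$ and $d^-(v_i)$ decrease by $1$ and $d^+(v_i) - d^-(v_i)$ is preserved. Substituting into the formula $\mathrm{ex}(\cdot) = \tfrac{1}{2}\sum_v |d^+(v) - d^-(v)|$ yields $\mathrm{ex}(D') = \mathrm{ex}(D) - 1$, closing the induction.

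There is no serious obstacle here: the only observation required is that a maximal directed path in an acyclic digraph must begin at a source and end at a sink, after which the excess accounting is immediate.
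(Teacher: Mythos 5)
Your proof is correct and takes essentially the same approach as the (commented-out) proof sketch in the paper's source: induction on the number of edges, extract a maximal directed path, observe that its start is a source and its end is a sink by acyclicity plus maximality, and conclude that removing the path drops the excess by exactly one. The only cosmetic difference is that you phrase the base case as $|E(D)|=0$ while the paper's sketch starts at one edge; both are fine.
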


	\no We say a path family $\mathcal{P}$ is a \textit{partial path decomposition} for $D$ if $|\{P\in \mathcal{P}:\ell(P)=v\}|\leq ex^+(v)$ and $|\{P\in \mathcal{P}:r(P)=v\}|\leq ex^-(v)$ for all $v\in V(D)$. Note that, if $\mathcal{P}$ is a partial path decomposition, then every $P\in \mathcal{P}$ is a plus-minus path.
	
	\begin{proposition}\label{prop:PARTIAL-COMPLETION}
		Let $D$ be a digraph, and $\mathcal{Q}$ be a partial path decomposition for $D$. If $D-\mathcal{Q}$ is consistent, then $D$ has a perfect path decomposition $\mathcal{P}$ with $\mathcal{Q}\subseteq \mathcal{P}$. 
	\end{proposition}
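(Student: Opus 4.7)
The plan is almost purely bookkeeping: I take a perfect path decomposition of the leftover digraph $D - \mathcal{Q}$, append it to $\mathcal{Q}$, and verify the sizes work out. The only real content is checking that removing a partial path decomposition lowers the excess by exactly the right amount, namely by $|\mathcal{Q}|$.

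First, since $D - \mathcal{Q}$ is consistent, it admits a perfect path decomposition $\mathcal{P}'$ with $|\mathcal{P}'|=\mathrm{ex}(D-\mathcal{Q})$. Set $\mathcal{P}:=\mathcal{Q}\cup \mathcal{P}'$. By construction $E(\mathcal{P}')=E(D)\setminus E(\mathcal{Q})$, so $\mathcal{P}$ is a path decomposition of $D$; it remains to show $|\mathcal{P}|=\mathrm{ex}(D)$, which reduces to showing $\mathrm{ex}(D-\mathcal{Q})=\mathrm{ex}(D)-|\mathcal{Q}|$.

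For each $v\in V(D)$, write $q^+(v):=|\{P\in \mathcal{Q}:\ell(P)=v\}|$ and $q^-(v):=|\{P\in \mathcal{Q}:r(P)=v\}|$. Every path in $\mathcal{Q}$ contributes equally to the in-degree and out-degree of its internal vertices, so removing the edges of $\mathcal{Q}$ from $D$ decreases $d^+(v)$ by $q^+(v)$ more than it decreases $d^-(v)$ at every vertex $v$. Hence
\begin{align*}
d^+_{D-\mathcal{Q}}(v)-d^-_{D-\mathcal{Q}}(v) \;=\; \bigl(d^+_D(v)-d^-_D(v)\bigr)-\bigl(q^+(v)-q^-(v)\bigr).
\end{align*}
Because $\mathcal{Q}$ is a partial path decomposition, every path in $\mathcal{Q}$ is a plus-minus path, so $q^+(v)>0$ forces $v\in V^+(D)$, and $q^-(v)>0$ forces $v\in V^-(D)$; moreover $q^+(v)\leq \mathrm{ex}^+(v)$ and $q^-(v)\leq \mathrm{ex}^-(v)$. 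Splitting into the three cases $v\in V^+(D)$, $v\in V^-(D)$, $v\in V^0(D)$, one obtains in each case $\mathrm{ex}^+_{D-\mathcal{Q}}(v)=\mathrm{ex}^+_D(v)-q^+(v)$ (and no minus-vertex in $D$ becomes a plus-vertex in $D-\mathcal{Q}$, nor vice versa).

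Summing over $v$ and using $\sum_v q^+(v)=|\mathcal{Q}|$ gives
\begin{align*}
\mathrm{ex}(D-\mathcal{Q})=\sum_{v}\mathrm{ex}^+_{D-\mathcal{Q}}(v)=\sum_{v}\mathrm{ex}^+_D(v)-\sum_v q^+(v)=\mathrm{ex}(D)-|\mathcal{Q}|.
\end{align*}
Therefore $|\mathcal{P}|=|\mathcal{Q}|+|\mathcal{P}'|=|\mathcal{Q}|+\mathrm{ex}(D)-|\mathcal{Q}|=\mathrm{ex}(D)$, so $\mathcal{P}$ is a perfect path decomposition of $D$ containing $\mathcal{Q}$. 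There is no genuine obstacle here; the only thing to be careful about is the case split on $V^+(D),V^-(D),V^0(D)$ that guarantees $q^+(v)$ and $q^-(v)$ cannot both be positive at the same vertex, which is exactly what makes the excess subtract cleanly.
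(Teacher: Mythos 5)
Your proof is correct and follows essentially the same route as the paper: show that removing a partial path decomposition lowers the excess by exactly $|\mathcal{Q}|$, then concatenate $\mathcal{Q}$ with a perfect path decomposition of $D-\mathcal{Q}$. The paper states the key identity $\mathrm{ex}^+_{D-\mathcal{Q}}(v)=\mathrm{ex}^+_D(v)-|\{Q\in\mathcal{Q}:\ell(Q)=v\}|$ more tersely, while you spell out the three-way case split on $V^+,V^-,V^0$; both amount to the same verification.
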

	\begin{proof}
		Let $D':=D-\mathcal{Q}$. 
		Note that for all $v\in V^+(D)$, $\mathrm{ex}^+_{D'}(v)=\mathrm{ex}_D^+(v)-|\{Q\in \mathcal{Q}:\ell(Q)=v\}|\geq 0$ as $\mathcal{Q}$ is a partial path decomposition. Hence, summing over all $v\in V(D')$ gives $\mathrm{ex}(D')=\mathrm{ex}(D)-|\mathcal{Q}|$.
		If $D'$ has a path decomposition $\mathcal{P}_0$ with exactly $\mathrm{ex}(D')$ many paths, then the result follows by setting $\mathcal{P}=\mathcal{Q}\cup \mathcal{P}_0$.
	\end{proof}
	
	\begin{remark}\label{remark:ACYCLIC-COMPLETION}
		If $A$ is an acyclic digraph, \Cref{prop:acyclic-consistent,prop:PARTIAL-COMPLETION} imply that  any partial path decomposition for $A$ can be completed to a perfect path decomposition. 
	\end{remark}

	\section{Graphs with no short cycles}\label{sec:main}
	
	In this section, we prove the following theorem, from which \Cref{corollary:main} follows immediately.
	
	\begin{theorem}\label{theorem:LARGE-GIRTH}
		Let $d\in \mathbb{N}$, and $D$ be a digraph with $\Delta^0(D)\leq d$ and $g(D)\geq 200d^2$ such that $\mathrm{ex}_D(v)\neq 0$ for all $v\in V(D)$. Then, $D$ is consistent.
	\end{theorem}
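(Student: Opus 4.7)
By \Cref{prop:PARTIAL-COMPLETION} together with \Cref{remark:ACYCLIC-COMPLETION}, it suffices to produce a partial path decomposition $\mathcal{Q}$ of $D$ such that $D - \mathcal{Q}$ is acyclic: then $\mathcal{Q}$ will extend to a perfect path decomposition of $D$. I would begin by iteratively deleting directed cycles from $D$ until none remains, thereby writing $D = E \cup A$ where $E$ is Eulerian and $A$ is acyclic. Since deleting cycles preserves $d^+(v) - d^-(v)$ at every vertex, we have $\mathrm{ex}_A(v) = \mathrm{ex}_D(v) \neq 0$ for every $v \in V(D)$, and in particular the sets $V^+, V^-$ are unchanged. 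Next, fix a cycle decomposition $\mathcal{C}$ of $E$ that \emph{maximizes} $|\mathcal{C}|$. The girth hypothesis forces every $C \in \mathcal{C}$ to have $|E(C)| \geq 200 d^2$, and maximality of $|\mathcal{C}|$ gives structural control: for any two $C_1, C_2 \in \mathcal{C}$, the subdigraph $C_1 \cup C_2$ admits no cycle decomposition into more than two cycles (otherwise one could enlarge $\mathcal{C}$), which restricts how cycles of $\mathcal{C}$ share vertices or connect through short subpaths of $E$.

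The heart of the proof is to \emph{absorb} each $C \in \mathcal{C}$ into a constant-size family of plus-minus paths whose edges together cover $E(C)$ plus some edges of $A$. Concretely, for each $C$ I would designate an edge $e_C = x_C y_C \in E(C)$ and build two plus-minus paths: a \emph{cycle-arc path}, which begins at a plus vertex, follows an $A$-tail to $y_C$, traverses the arc $C - e_C$ around to $x_C$, then follows an $A$-tail to a minus vertex; and an \emph{edge path}, which begins at a plus vertex, reaches $x_C$ via an $A$-tail, crosses the single edge $e_C$ to $y_C$, and exits via an $A$-tail to a minus vertex. Since $A$ is acyclic and inherits the sign of $d^+ - d^-$ at each vertex from $D$, the sources of $A$ are plus vertices and the sinks are minus vertices, so every required $A$-tail can in principle be constructed by walking backward (respectively forward) from $x_C$ or $y_C$ inside $A$.

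The principal obstacle is to make these $A$-tail choices \emph{simultaneously for all $C \in \mathcal{C}$} in an edge-disjoint fashion while respecting the partial-decomposition constraints $|\{P \in \mathcal{Q} : \ell(P)=v\}| \leq \mathrm{ex}^+_D(v)$ and $|\{P \in \mathcal{Q} : r(P)=v\}| \leq \mathrm{ex}^-_D(v)$ at every vertex. Here the large-girth assumption does the real work: with $g(D) \geq 200 d^2$ and $\Delta^0 \leq d$, the ball of radius $\sim d^2$ around any vertex is tree-like and meets each cycle of $\mathcal{C}$ in at most one vertex (else one would find a shorter directed cycle, contradicting $g(D)$); combined with the structural control from maximality of $|\mathcal{C}|$, this isolates the local regions in which distinct $A$-tails must live. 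Since every vertex has $\mathrm{ex}_D(v) \geq 1$, the endpoint budget is always positive, so a careful greedy procedure, or a Hall-type matching via \Cref{prop:HALL-CONSEQUENCE}, or an application of the Lov\'asz Local Lemma (\Cref{prop:LOCAL-LEMMA}), should allow a globally compatible choice of all the tails. Once $\mathcal{Q}$ is in place, $D - \mathcal{Q} \subseteq A$ is acyclic, and \Cref{prop:PARTIAL-COMPLETION} delivers the desired perfect path decomposition of $D$.
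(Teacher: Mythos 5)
Your high-level plan — write $D$ as an Eulerian part plus an acyclic part $A$, pick a maximum-size cycle family $\mathcal{C}$, build plus-minus paths that absorb each cycle, and finish with \Cref{prop:PARTIAL-COMPLETION} — is the right skeleton and matches the paper's strategy in spirit. But the absorption step, which is the entire content of the theorem, is not actually carried out, and at least one of the structural claims you lean on is false.

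First, maximizing $|\mathcal{C}|$ alone is not enough: the paper's \Cref{prop:CYCLE-SELECTION} picks, among all maximum-size cycle families, one with the \emph{smallest total number of edges}, and this extra minimization is exactly what yields the no-chord property $xy\notin E(A)$ for $x,y\in V(C)$. Your substitute observation (that $C_1\cup C_2$ cannot be re-decomposed into three or more cycles) is a genuinely weaker statement and does not control chords. Second, your assertion that a ball of radius $\sim d^2$ ``meets each cycle of $\mathcal{C}$ in at most one vertex'' does not follow from $g(D)\ge 200d^2$: two vertices $u,v\in V(C_i)$ can be joined by a short directed path $P\subseteq A$ without creating a short directed cycle, since $P$ together with the arc $vC_iu$ gives a cycle of length $|P|+|vC_iu|$, which is large because the \emph{arc} is long, not because $P$ is. This is precisely the difficulty that the paper's ``precise part'' construction $\mathrm{pp}(P,C,v)$, combined with the no-chord property, is designed to handle: it truncates a path that re-enters the cycle at the last sign change, and when that sign change happens to lie back on the cycle, the no-chord property guarantees an intermediate vertex at which to cut.

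Third, and most importantly, the edge-disjointness of all the $A$-tails across all cycles is not something that ``a careful greedy procedure or Hall or LLL'' handles automatically; this is where the real work lives. The paper's resolution is to \emph{first} fix a perfect path decomposition $\mathcal{P}$ of $A$ and take each tangent path $Q_{ij}$ as a subpath of some $P_j\in\mathcal{P}$, so disjointness is inherited for free; the bipartite Hall argument (\Cref{prop:HALL-CONSEQUENCE}, using that each $P_j$ is incident to $\le 2d$ cycles while each $C_i$ has $\ge g\ge 200d^2$ incident paths) allocates $100d$ such paths to each cycle, and then the Local Lemma (\Cref{prop:MATCHING} inside \Cref{prop:KEY}) selects two per cycle whose tangency points have the \emph{same sign} and whose endpoints are globally distinct. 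That same-sign requirement is what lets $C_i\cup\{Q_{ia_i},Q_{ib_i}\}$ be decomposed into exactly two plus-minus paths by splitting the cycle at the two tangency points. Your alternative ``cycle-arc path plus edge path'' scheme is a different absorption, and you have not verified that the resulting two walks are simple paths, that they are plus-minus, or that the endpoint budgets $\mathrm{ex}^\pm(v)$ are respected simultaneously over all cycles; without the subpath-of-$\mathcal{P}$ trick, none of these come for free.
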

	
	\no In fact, it is possible to allow excess-zero vertices in \Cref{theorem:LARGE-GIRTH} as long as they are far away from each other. For a digraph $D$, we call $S\subseteq V(D)$ \textit{$k$-sparse} in $D$ if for all $u\in S$, there is at most one vertex $v\in S\setminus\{u\}$ satisfying $\mathrm{dist}_{D^\diamond}(u,v)\leq k$. Our general result is the following. 

	
	\begin{theorem}\label{theorem:KEY}
		Let $d\in \mathbb{N}$, and $D$ be a digraph with $\Delta^0(D)\leq d$ and $g(D)\geq 1000d^2$ such that $V^0(D)$ is $k$-sparse in $D$ for some $k\geq 20d^2+2$. Then, $D$ is consistent.
	\end{theorem}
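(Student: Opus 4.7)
The plan is to apply \Cref{prop:PARTIAL-COMPLETION} via \Cref{remark:ACYCLIC-COMPLETION}: it suffices to construct a partial path decomposition $\mathcal{Q}$ of $D$ such that $D-\mathcal{Q}$ is acyclic, for then $\mathcal{Q}$ extends to a perfect path decomposition. We iteratively remove directed cycles from $D$ to obtain a decomposition $D=A\cup E$ where $A$ is acyclic and $E$ is Eulerian. Fix a cycle decomposition $\mathcal{C}$ of $E$ maximizing $|\mathcal{C}|$, so as to obtain structural control over the cycles (as suggested in \Cref{sec:intro}); the girth bound $g(D)\geq 1000d^2$ forces every $C\in\mathcal{C}$ to have length at least $1000d^2$. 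Following the idea in \Cref{sec:intro}, for each $C\in\mathcal{C}$ we build a small family $\mathcal{Q}_C$ of plus-minus paths whose edges cover $E(C)$ (possibly together with a few edges of $A$ locally near $C$), and we aim to choose the $\mathcal{Q}_C$'s so that $\mathcal{Q}=\bigcup_{C\in \mathcal{C}}\mathcal{Q}_C$ is edge-disjoint and forms a partial path decomposition. Since then $E(\mathcal{Q})\supseteq E(E)$, we have $D-\mathcal{Q}\subseteq A$, which is acyclic, as required.

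For the local construction at each $C$, the $k$-sparsity of $V^0(D)$ with $k\geq 20d^2+2$ and $|C|\geq 1000d^2$ ensures that $V(C)$ is mostly non-zero and contains only a few well-spaced excess-zero vertices. The cleanest case is when $C$ contains an edge $uv$ with $u\in V^-(D)$ and $v\in V^+(D)$: then $C-uv$ is itself a single plus-minus path covering $E(C)$, and we set $\mathcal{Q}_C=\{C-uv\}$. When no such edge exists (e.g.\ when $V(C)\cap V^-(D)=\emptyset$), we split $C$ into arcs and use two or more absorbing paths, extending the ends through $A$-edges to reach vertices of the required type. Here we crucially use that $A$ is acyclic: any directed $A$-path from a vertex terminates at a sink $v$ with $d_A^+(v)=0$, hence $d_D^+(v)\leq d_D^-(v)$, so $v\in V^-(D)\cup V^0(D)$; symmetrically every $A$-source is in $V^+(D)\cup V^0(D)$. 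The sparsity of $V^0$ lets us route the $A$-extensions to reach vertices in $V^+(D)$ and $V^-(D)$ specifically rather than zero-excess vertices, and the large girth provides tree-like local neighborhoods of $C$ in $D^\diamond$ so that extensions can be carried out without revisiting vertices. Any zero vertices lying on $C$ itself are simply routed through as internal vertices, which is possible because they have equal in- and out-degree in $D$ and are sparsely spaced.

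The main obstacles are (i) making the $\mathcal{Q}_C$'s pairwise edge-disjoint across different cycles, and (ii) satisfying the start/end degree bounds of a partial path decomposition, namely $|\{P\in\mathcal{Q}:\ell(P)=v\}|\leq \mathrm{ex}^+(v)$ and the symmetric bound at every vertex $v$. For (i), each absorber uses only edges within bounded distance $O(d^2)$ of $C$, and since $g(D)\geq 1000d^2$ is much larger, the local neighborhoods used by distinct cycles are essentially separate, so a greedy choice of $A$-extensions avoids conflicts. For (ii), while $v$ may lie on up to $d_E^+(v)\leq d$ cycles of $\mathcal{C}$, only $\mathrm{ex}^+(v)=d_A^+(v)-d_A^-(v)$ paths may start at a plus vertex $v$ in $\mathcal{Q}$; the maximality of $|\mathcal{C}|$ is exactly what gives enough structural control on the in/out-edge pairings of $v$ in $E$ to allocate starts and ends consistently (using the available $A$-surplus at plus and minus vertices). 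The technically hardest step is therefore to carry out the local cycle-by-cycle absorption with all the necessary choices—which plus/minus vertices on $C$ to use, which $A$-extensions to route through, and how to treat zero vertices on $C$—in a way that is globally consistent with both edge-disjointness and the per-vertex start/end bounds.
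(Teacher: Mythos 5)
Your high-level reduction is fine: it does suffice to produce a partial path decomposition $\mathcal{Q}$ of $D$ whose removal leaves an acyclic digraph, by \Cref{prop:acyclic-consistent} and \Cref{prop:PARTIAL-COMPLETION}. But the heart of the matter is carrying out the construction of $\mathcal{Q}$, and there the proposal has genuine gaps.

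The most serious problem is the claim that ``the local neighborhoods used by distinct cycles are essentially separate'' because of the girth bound. This is false. The cycles in a maximal cycle family are edge-disjoint, but they can freely share vertices --- a single vertex $v$ can lie on up to $\min(d^+(v),d^-(v))\leq d$ of them, and two cycles can intersect in many vertices. A large girth of $D$ gives no separation whatsoever between distinct cycles of $\mathcal{C}$; it only ensures each individual cycle is long. So the greedy ``local absorbers around each $C$ are independent'' argument does not get off the ground. The paper deals precisely with this difficulty by first fixing a perfect path decomposition $\mathcal{P}$ of the acyclic remainder $A$, extracting for each $C_i$ a large family $\mathcal{R}_i$ of tangent subpaths, applying Hall's theorem (\Cref{prop:HALL-CONSEQUENCE}) to make the $\mathcal{R}_i$'s disjoint, and then invoking the Lov\'asz Local Lemma (via \Cref{prop:MATCHING} inside \Cref{prop:KEY}) to select two tangent paths per cycle with globally distinct endpoints. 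None of this machinery appears in the proposal, and without it the edge-disjointness of $\bigcup_C\mathcal{Q}_C$ and the per-vertex start/end bounds are not achievable by local reasoning.

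Relatedly, the appeal to ``maximality of $|\mathcal{C}|$ gives enough structural control on the in/out-edge pairings of $v$ in $E$'' is not justified; maximality (as used in \Cref{prop:CYCLE-SELECTION}) yields the no-chord property, which is needed for a different purpose (truncating a path of $\mathcal{P}$ that leaves $C_i$ and immediately re-enters it, so that the resulting subpath is genuinely tangent). It does not control how many absorber paths must start or end at a given vertex; that is controlled in the paper by the fact that the $Q_{ij}$'s are subpaths of an already-perfect path decomposition of $A$ (hence respect the excess bounds by \Cref{prop:basic}). Finally, the ``clean case'' that $C-uv$ with $u\in V^-(D)$, $v\in V^+(D)$ is a single plus-minus path covering $E(C)$ overlooks that the edge $uv$ itself remains uncovered, and $uv$ alone is a minus-plus, not plus-minus, path --- so even this base case does not close. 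In short, the proposal captures the informal absorption idea from the introduction but misses the Hall's theorem and Local Lemma steps that make the allocation of absorbers globally consistent, and it rests on a false separation claim about the cycles in $\mathcal{C}$.
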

	
	\no We note that \Cref{theorem:KEY} will be an important part in the proof of   {\Cref{thm:RANDOM-REGULAR}. In~\Cref{sec:random-regular}, we use \Cref{theorem:KEY} in the proof of a stronger result \Cref{thm:discrete-orientation}, from which the proof of \Cref{thm:RANDOM-REGULAR} will be immediate}. We first need a proposition that has a similar flavour to finding an independent transversal.
	
	\begin{proposition}\label{prop:MATCHING}
		Let $d\in \mathbb{N}$, and $G$ be a graph with $\Delta(G)\leq d$. Let $\{X_i:i\in[t]\}$ be a collection of disjoint subsets of $V(G)$ such that $|X_i|\geq 25d$ for each $i\in [t]$. Then $G$ has an independent set $X$ of size $2t$ with $|X\cap X_i|=2$ for all $i\in [t]$. 
	\end{proposition}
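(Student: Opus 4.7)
The plan is to apply the Lovász Local Lemma (\Cref{prop:LOCAL-LEMMA}): the statement is essentially a ``2-per-part independent transversal'', and the hypothesis $|X_i| \geq 25d$ is comfortably larger than what the symmetric LLL needs. First I would trim each $X_i$ down to exactly $25d$ vertices (this is harmless since a smaller independent set with two vertices per $X_i$ still meets the conclusion), so that the per-event probability and the dependency degree balance cleanly across events. Then, for each $i \in [t]$, I would sample a uniformly random $2$-subset $S_i \subseteq X_i$, independently across $i$, and set $X := \bigcup_{i \in [t]} S_i$. By construction $|X| = 2t$ with $|X \cap X_i| = 2$, so it suffices to show that with positive probability $X$ is independent in $G$.

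The bad events will be indexed by edges: for each $e = \{u, v\} \in E(G)$ with $u \in X_i$ and $v \in X_j$ (possibly $i = j$), let $A_e$ denote the event that $\{u, v\} \subseteq X$. A direct calculation gives $\Pr(A_e) \leq 4/(25d)^2$ in both the intra-set ($i = j$) and inter-set ($i \neq j$) cases. Moreover, $A_e$ is measurable with respect to $(S_i, S_j)$ alone, hence mutually independent of every $A_{e'}$ whose edge $e'$ has no endpoint in $X_i \cup X_j$. Since $\Delta(G) \leq d$, the number of edges with at least one endpoint in $X_i \cup X_j$ is at most $d(|X_i| + |X_j|) = 50 d^2$, which bounds the dependency degree.

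The LLL condition $e \cdot p \cdot D \leq 1$ then reduces to $e \cdot \frac{4}{(25d)^2} \cdot 50 d^2 = \frac{8e}{25}$, which is less than $1$ since $8e \approx 21.7$; \Cref{prop:LOCAL-LEMMA} then delivers the desired random outcome and hence the required independent set. The only mild subtlety is the normalization step at the beginning: without first fixing $|X_i|$ to a common value, the dependency degree $d(|X_i| + |X_j|)$ could be arbitrarily large compared to the uniform probability bound $4/(25d)^2$, which would break a direct symmetric LLL application; trimming to $|X_i| = 25d$ is precisely what makes the product constant work out.
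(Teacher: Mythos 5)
Your proof is correct and follows essentially the same approach as the paper's: trim each $X_i$ to exactly $25d$ vertices, sample two random vertices per part, bound the per-edge bad event probability by $4/(625d^2)$ in both the intra-part and inter-part cases, bound the dependency degree by $50d^2$, and close with the symmetric Lovász Local Lemma since $e\cdot(4/(625d^2))\cdot 50d^2 = 8e/25 < 1$. The paper phrases the sampling as picking two vertices without replacement rather than a uniform 2-subset, but that is the same distribution, so the arguments coincide.
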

	\begin{proof}
		Without loss of generality, we can assume that $|X_i|=25d$ for each $i\in [t]$ (or else we can remove some vertices
		from $X_i$), and that $V(G)=\bigcup_{i\in [t]}V(X_i)$. For each $i\in [t]$, pick two vertices $a_i,b_i\in X_i$ uniformly at random (without replacement) {, where the choices are made independently of one another}. Let $X=\bigcup_{i\in [t]}\{a_i,b_i\}$. For any edge $f\in E(G)$, let $A_f$ be the event that both endpoints of $f$ lie in $X$. Note that $X$ is an independent set if and only if none of the events $\{A_f:f\in E(G)\}$ occur. Hence, it suffices to show that $\mathbb{P}\left(\bigcap_{f\in E(G)}A_f^c\right)>0$. \\
		
		\no Consider an edge $f=uv\in E(G)$ with $u\in X_i$ and $v\in X_j$ for some $i,j\in [t]$. Note that if $i\neq j$, we have
		\begin{align*}
			\Pro(A_f)=\Pro(u,v\in X)=\left(1-\dfrac{\binom{25d-1}{2}}{\binom{25d}{2}}\right)^2=\dfrac{4}{625d^2}.   
		\end{align*}
		Similarly, if $i=j$, we have
		\begin{align*}
			\Pro(A_f)=\Pro(u,v\in X)=\dfrac{1}{\binom{25d}{2}}=\dfrac{2}{25d(25d-1)}\leq \dfrac{4}{625d^2}.
		\end{align*}
		As a result, we have $\Pro(A_f)\leq 4d^{-2}/625$. Moreover, if $A_{f}$ and $A_{g}$ are dependent for some $g\in E(G)$, then $g$ should be incident to a vertex in $X_i\cup X_j$. This means that the event $A_f$ is independent of all the other events $\{A_g:g\in E(G)\}$ except for at most $2\cdot 25d\cdot d=50d^2$ of them. By \Cref{prop:LOCAL-LEMMA}, the result follows as $e\cdot (4d^{-2}/625)\cdot 50d^2= 8e/25 <1$. 
	\end{proof}

	\no The following will be the key lemma in the proofs of \Cref{theorem:LARGE-GIRTH,theorem:KEY}. We say a cycle family $\mathcal{C}$ in a digraph $D$ is \textit{maximal} if $D-\mathcal{C}$ is acyclic.

	\begin{lemma}\label{prop:KEY}
		Let $d\in \mathbb{N}$, and $D$ be a digraph with $\Delta^0(D)\leq d$. Suppose that there exists a maximal cycle family $\mathcal{C}=\{C_i:i\in [t]\}$ in $D$ and a path family $\{Q_{ij}:i\in [t],\, j\in [100d]\}$ of plus-minus paths in $D-\mathcal{C}$ such that $Q_{ij}$ is tangent to $C_i$ for all $i\in [t]$ and $j\in [100d]$. Then, $D$ is consistent.
	\end{lemma}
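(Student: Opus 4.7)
The plan is to implement the absorption strategy outlined in the introduction. Since $\mathcal{C}$ is maximal, the digraph $D - \mathcal{C}$ is acyclic; moreover, removing a directed cycle does not change $d^+(v) - d^-(v)$ at any vertex, so $\mathrm{ex}(D - \mathcal{C}) = \mathrm{ex}(D)$. The goal is to select a subfamily $\mathcal{X} \subseteq \{Q_{ij}\}$ containing exactly two paths per cycle such that (a) $\mathcal{X}$ is a partial path decomposition of $D - \mathcal{C}$, and (b) for each $i$, the two selected paths $\{Q, Q'\} = \mathcal{X} \cap \{Q_{ij}\}_{j}$, together with $C_i$, decompose into exactly two plus-minus paths whose edges partition $E(C_i) \cup E(Q) \cup E(Q')$. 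Given such $\mathcal{X}$, \Cref{remark:ACYCLIC-COMPLETION} extends it to a perfect path decomposition $\mathcal{P}_0$ of $D - \mathcal{C}$ with $|\mathcal{P}_0| = \mathrm{ex}(D)$; letting $\mathcal{X}^*$ denote the $2t$ rearranged paths, the family $\mathcal{X}^* \cup (\mathcal{P}_0 \setminus \mathcal{X})$ is then a path decomposition of $D$ with $2t + (\mathrm{ex}(D) - 2t) = \mathrm{ex}(D)$ paths, witnessing the consistency of $D$.

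To construct $\mathcal{X}$, for each $(i,j)$ let $z_{ij}$ be the unique vertex in $V(Q_{ij}) \cap V(C_i)$. Since $Q_{ij}$ is plus-minus and tangent to $C_i$, either $z_{ij} = \ell(Q_{ij}) \in V^+(D)$ (call this \emph{outgoing} type) or $z_{ij} = r(Q_{ij}) \in V^-(D)$ (\emph{incoming} type). Pigeonhole yields, for each $i$, a subfamily $\mathcal{Q}_i \subseteq \{Q_{ij}\}_{j \in [100d]}$ with $|\mathcal{Q}_i| \geq 50d$ whose members share a common type. Build an auxiliary graph $H$ on $\bigsqcup_i \mathcal{Q}_i$ in which two distinct paths are adjacent whenever they share a start vertex or share an end vertex. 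Because the paths are edge-disjoint and $\Delta^0(D) \leq d$, at most $d - 1$ other paths share any given start (and similarly for ends), so $\Delta(H) \leq 2(d-1) \leq 2d$. Applying \Cref{prop:MATCHING} with parameter $2d$ (noting $|\mathcal{Q}_i| \geq 50d = 25 \cdot 2d$) produces an independent set $\mathcal{X}$ in $H$ with $|\mathcal{X} \cap \mathcal{Q}_i| = 2$ for each $i$. Independence forces the starts of paths in $\mathcal{X}$ to be pairwise distinct and the ends to be pairwise distinct; since each such path is plus-minus in $D - \mathcal{C}$, this makes $\mathcal{X}$ a partial path decomposition of $D - \mathcal{C}$.

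For the rearrangement step, fix $i$ and write $\mathcal{X} \cap \mathcal{Q}_i = \{Q, Q'\}$ with tangent vertices $z, z' \in V(C_i)$; independence forces $z \neq z'$. If $\mathcal{Q}_i$ is of incoming type, so $r(Q) = z$ and $r(Q') = z'$, replace $\{Q, Q', C_i\}$ with the two paths $\ell(Q)\,Q\,z\,C_i\,z'$ and $\ell(Q')\,Q'\,z'\,C_i\,z$; if $\mathcal{Q}_i$ is of outgoing type, so $\ell(Q) = z$ and $\ell(Q') = z'$, use $z'\,C_i\,z\,Q\,r(Q)$ and $z\,C_i\,z'\,Q'\,r(Q')$ instead. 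The tangent conditions $V(Q) \cap V(C_i) = \{z\}$ and $V(Q') \cap V(C_i) = \{z'\}$, together with $z \neq z'$, ensure that these concatenations are genuine plus-minus paths, and their edges partition $E(C_i) \cup E(Q) \cup E(Q')$ as required. The main subtlety here is that monochromaticity keeps $Q$ and $Q'$ internally separate: each new path combines only one of the two tangent paths with a single arc of $C_i$, so any possible vertex overlap between $Q$ and $Q'$ outside $V(C_i)$ does not create self-intersections. A mixed incoming/outgoing pair would instead splice $Q$ and $Q'$ through the cycle into a single long path, which could be invalid if $V(Q) \cap V(Q') \neq \emptyset$; this is precisely why the pigeonhole step is applied and why the hypothesis supplies $100d$ (rather than $50d$) tangent paths per cycle.
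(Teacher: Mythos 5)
Your proposal is correct and follows essentially the same approach as the paper: remove $\mathcal{C}$ to obtain an acyclic digraph with the same excess, use the pigeonhole on the tangent-vertex sign to extract $50d$ same-type paths per cycle, apply \Cref{prop:MATCHING} with parameter $2d$ to get a rainbow-like independent set $\mathcal{X}$ selecting two paths per cycle, extend $\mathcal{X}$ to a perfect decomposition of the acyclic part via \Cref{remark:ACYCLIC-COMPLETION}, and finally rearrange each $C_i$ together with its two selected same-type tangent paths into two plus-minus paths via the explicit crossover splices. The key observations you highlight — that independence forces $z\neq z'$, that tangency confines the overlap of each arc of $C_i$ with its attached $Q$ to a single vertex, and that same-sign selection avoids splicing $Q$ and $Q'$ through the cycle (which would risk self-intersection) — are precisely the ones the paper relies on, though the paper leaves the last of these implicit.
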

	\begin{proof}
		
		\no Let $A:=D-\mathcal{C}$, and $z_{ij}$ be the unique vertex in $V(Q_{ij})\cap V(C_i)$ for $i\in [t]$ and $j\in [100d]$. Since $\mathcal{C}$ is a cycle family, we have $\mathrm{ex}_D(v)=\mathrm{ex}_A(v)$ for all $v\in V(D)$, which, in particular, implies that $\mathrm{ex}(D)=\mathrm{ex}(A)$. Note that $A$ is acyclic as $\mathcal{C}$ is maximal. We emphasize that $\{Q_{ij}:i\in [t],\, j\in [100d]\}$ is not necessarily a partial path decomposition for $A$. We claim that for each $i\in [t]$, there exist distinct $a_i,b_i\in [100d]$ such that
		\begin{enumerate}[label={\rm (\alph*)}]
			\item $\bigcup_{i\in [t]}\{Q_{ia_i},Q_{ib_i}\}$ is a partial path decomposition for $A$, \label{item:Q-is-partial-decomposition}
			\item for all $i\in[t]$, {we have $z_{ia_i}\neq z_{ib_i}$, and} either $z_{ia_i},z_{ib_i}\in V^+(D)$ or $z_{ia_i},z_{ib_i}\in V^-(D)$. \label{item:endpoints-same-sign}
		\end{enumerate}
		
		\no {For each $i\in [t]$, let} $\mathcal{Q}^{+}_i=\{Q_{ij}:z_{ij}\in V^{+}(D),\,j\in [100d]\}$ and $\mathcal{Q}^{-}_i=\{Q_{ij}:z_{ij}\in V^{-}(D),\,j\in [100d]\}$. Note that either $|\mathcal{Q}^{+}_i|\geq 50d$ or $|\mathcal{Q}^{-}_i|\geq 50d$. Let $\mathcal{Q}_i$ be the largest one among $\mathcal{Q}^+_i$ and $\mathcal{Q}^-_i$.  {Next}, we define an undirected graph $G$ with vertex set $\mathcal{Q}:=\bigcup_{i\in [t]}\mathcal{Q}_i$ and such that for any distinct $Q,Q'\in \mathcal{Q}$, we have $QQ'\in E(G)$ if and only if either $\ell(Q)=\ell(Q')$ or $r(Q)=r(Q')$. It is easy to see that $\Delta(G)\leq 2d-2$ as $\Delta^0(D)\leq d$. Since $|\mathcal{Q}_i|\geq 50d$ for all $i\in [t]$, by applying Proposition~\ref{prop:MATCHING} (where $2d$ plays the role of $d$), we find an independent set $\mathcal{X}$ of size $2t$ in $G$ such that $|\mathcal{X}\cap \mathcal{Q}_i|=2$ for all $i\in [t]$. Letting $\mathcal{X}\cap \mathcal{Q}_i=\{Q_{ia_i},Q_{ib_i}\}$, we see that \ref{item:Q-is-partial-decomposition} holds as $\mathcal{X}$ is an independent set in $G$. Moreover, {we have $z_{ia_i}\neq z_{ib_i}$ for all $i\in [t]$. Then,} \ref{item:endpoints-same-sign} holds by the definition of $\mathcal{Q}_i$, so the claim follows.\\
		
		\no Using \ref{item:Q-is-partial-decomposition}, $A$ has a perfect path decomposition $\mathcal{P}_0$ with $\mathcal{X}\subseteq \mathcal{P}_0$ by \Cref{remark:ACYCLIC-COMPLETION}. Then, 
		$\mathcal{P}_0$ is a partial path decomposition for $D$ as $\mathrm{ex}_D(v)=\mathrm{ex}_A(v)$ for all $v\in V(D)$, and hence so is $\mathcal{P}_0\setminus \mathcal{X}$. By \Cref{prop:PARTIAL-COMPLETION}, it suffices to prove that the digraph $D'$ induced by the edges $\bigcup_{i\in [t]}\left(E(C_i)\cup E(Q_{ia_i})\cup E(Q_{ib_i})\right)$ is consistent. It is clear that $\mathrm{ex}(D')=2t$, and we will show that for each $i\in [t]$, $C_i\cup \{Q_{ia_i},Q_{ib_i}\}$ can be decomposed into two paths. Fix $i\in [t]$. Using \ref{item:endpoints-same-sign}, we have either $z_{ia_i},z_{ib_i}\in V^+(D)$ or $z_{ia_i},z_{ib_i}\in V^-(D)$. Then, if $z_{ia_i},z_{ib_i}\in V^+(D)$, we define $U_i:=z_{ia_i}C_iz_{ib_i}Q_{ib_i}r(Q_{ib_i})$ and $V_i:=z_{ib_i}C_iz_{ia_i}Q_{ia_i}r(Q_{ia_i})$. Similarly, if $z_{ia_i},z_{ib_i}\in V^-(D)$, we define $U_i:=\ell(Q_{ia_i})Q_{ia_i}z_{ia_i}C_iz_{ib_i}$ and $V_i:=\ell(Q_{ib_i})Q_{ib_i}z_{ib_i}C_iz_{ia_i}$. In either case, $\{U_i,V_i\}$ decomposes $C_i\cup \{Q_{ia_i},Q_{ib_i}\}$, so the result follows.
	\end{proof}	
	
	\no The following proposition gives us a cycle family with a simple but crucial structural property that is needed for our absorption method.
	
	\begin{proposition}\label{prop:CYCLE-SELECTION}
		Let $D$ be digraph. Then, there exists a maximal cycle family $\mathcal{C}$ in $D$ such that, writing $A:=D-\mathcal{C}$, we have $xy\notin E(A)$ for every $C\in \mathcal{C}$ and distinct $x,y\in V(C)$.   
	\end{proposition}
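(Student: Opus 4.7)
The plan is to construct $\mathcal{C}$ greedily by iteratively extracting shortest (directed) cycles. Concretely, I would set $D_0 := D$ and, as long as $D_{i-1}$ contains a directed cycle, let $C_i$ be a shortest such cycle in $D_{i-1}$ and set $D_i := D_{i-1} - C_i$. The process terminates at some step $t$ with $D_t$ acyclic. Taking $\mathcal{C} := \{C_1, \ldots, C_t\}$ and $A := D_t = D - \mathcal{C}$, the family $\mathcal{C}$ is automatically a maximal cycle family in $D$.

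To verify the chord-freeness property, I would argue by contradiction. Suppose some $C_i \in \mathcal{C}$ has distinct $x, y \in V(C_i)$ with $xy \in E(A)$. Since $E(A) \subseteq E(D_{i-1})$, the edge $xy$ is present in $D_{i-1}$, and the path $yC_ix$ (from $y$ to $x$ along $C_i$) also lies in $D_{i-1}$. Gluing them produces a directed cycle $C'$ in $D_{i-1}$ with $|C'| = |yC_ix| + 1$, while $|C_i| = |yC_ix| + |xC_iy|$. Hence $|C'| < |C_i|$ whenever $|xC_iy| \geq 2$, which holds because $|xC_iy| = 1$ would force $xy \in E(C_i)$, contradicting $xy \in E(A) = E(D) \setminus E(\mathcal{C})$. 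This contradicts the choice of $C_i$ as a shortest cycle in $D_{i-1}$.

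I do not expect a real obstacle here; the argument is essentially a one-line exchange. The only points of care are directional: the edge $xy$ must be combined with the arc $yC_ix$ (not $xC_iy$) to produce a directed cycle, and the fact that $xy \notin E(C_i)$ — which follows purely from $xy \in E(A)$ and edge-disjointness of $\mathcal{C}$ and $A$ — is what strictly decreases the length and triggers the contradiction.
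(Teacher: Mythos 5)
Your proof is correct, but it takes a genuinely different route from the paper's. The paper argues globally: among all cycle families of maximum size (equivalently, all maximal cycle families), it picks one $\mathcal{C}$ minimizing $\sum_{C\in\mathcal{C}}|E(C)|$, and shows that a chord $xy\in E(A)$ of some $C_0\in\mathcal{C}$ would let it swap $C_0$ for the shorter cycle obtained by replacing the arc $xC_0y$ with $xy$, contradicting minimality of the total edge count. Your approach is greedy and local: build $\mathcal{C}$ one cycle at a time by always extracting a \emph{shortest} cycle from the remaining digraph, and observe that a chord $xy\in E(A)\subseteq E(D_{i-1})$ of $C_i$ would have produced a shorter cycle $yC_ix + xy$ in $D_{i-1}$, contradicting the choice made at step $i$. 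Both hinge on the same exchange (a chord shortens a cycle, and $xy\notin E(C_i)$ guarantees the shortening is strict), but they differ in where the contradiction is anchored: the paper's is against a global potential function over the whole family, while yours is against the local optimality at the step where $C_i$ was chosen. The paper's formulation avoids tracking the sequence $D_0,\ldots,D_t$ and makes the maximality of $\mathcal{C}$ immediate from the definition of $t$; your formulation is more algorithmic and makes maximality immediate from the termination condition. Both are clean and essentially equally short.
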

	\begin{proof}
		Let $t\geq 0$ be the maximum integer such that $D$ has $t$ many edge-disjoint cycles. Then, take a cycle family $\mathcal{C}$ with $|\mathcal{C}|=t$ such that $\sum_{C\in \mathcal{C}}|E(C)|$ is as small as possible. Since $\mathcal{C}$ has $t$ cycles, it is clear that $A$ is acyclic.  {Assume for a contradiction that there exists a cycle $C_0\in \mathcal{C}$ and distinct vertices $x,y\in V(C_0)$ such that $xy\in E(A)$. As there are no double edges, we have $xy\notin E(C_0)$.} Let $C_1$ be the cycle obtained from $C_0$ by replacing the arc $xC_0y$ by the edge $xy$. It is clear that $|E(C_1)|<|E(C_0)|$. Then, $(\mathcal{C}-C_0)\cup C_1$ is another cycle family consisting of $t$ many cycles, having strictly less than $\sum_{C\in \mathcal{C}}|E(C)|$ edges in total, a contradiction.
	\end{proof}
	
	\no We will use the following notation in the proofs of \Cref{theorem:LARGE-GIRTH,theorem:KEY}. For a cycle $C$ and a plus-minus path $P$ that is incident to $C$ at $v$ in a digraph $D$, we define the \textit{precise part of $P$ with respect to $C$ through $v$}, denoted by $\mathrm{pp}(P,C,v)$, as follows. If $v\in V^+(D)$ (so if $P$ starts at $v$), we define $\mathrm{pp}(P,C,v)$ as $zPy$ where $y$ is the first minus vertex along $P$ and $z\neq y$ is the last vertex along $vPy$ lying in $C$ (possibly $z=v$). Similarly, if $v\in V^-(D)$ (so if $P$ ends at $v$), we define $\mathrm{pp}(P,C,v)$ as $yPz$ where $y$ is the last plus vertex along $P$ and $z\neq y$ is the first vertex along $yPv$ lying in $C$ (possibly $z=v$). Note by the definition that $\mathrm{pp}(P,C,v)\subseteq P$, and that either $\mathrm{pp}(P,C,v)$ is tangent to $C$ (if $y\notin V(C)$) or $V(\mathrm{pp}(P,C,v))\cap V(C)=\{y,z\}$. Moreover, $\mathrm{pp}(P,C,v)\subseteq P$ is a plus-minus path in $D$ unless $z\in V^0(D)$, so $\mathrm{pp}(P,C,v)$ will always be a plus-minus path in the proof of \Cref{theorem:LARGE-GIRTH}.
	
	
	\begin{proof}[Proof of \Cref{theorem:LARGE-GIRTH}]
		If $D$ is acyclic, then the result follows by \Cref{prop:acyclic-consistent}. Otherwise, using \Cref{prop:CYCLE-SELECTION}, let us take a cycle family $\mathcal{C}=\{C_i:i\in [t]\}$ in $D$ such that $A:=D-\mathcal{C}$ is acyclic, and
		\begin{align}
			xy\notin E(A)\text{ for all }i\in [t]\text{ and distinct }x,y\in V(C_i). \label{eqn:no-chord-property} \tag{*}
		\end{align}
		By \Cref{prop:acyclic-consistent}, $A$ is consistent. Let $\mathcal{P}=\{P_j:j\in [r]\}$ be a perfect path decomposition for $A$ where $r:=\mathrm{ex}(A)=\mathrm{ex}(D)$. We define an undirected bipartite graph $G=G(\mathcal{C},\mathcal{P})$ such that for any $i\in[t]$ and $j\in [r]$, we have $C_iP_j\in E(G)$ if and only if $P_j$ is incident to $C_i$. Note that $d_{G}(P_j)\leq 2d$ for every $j\in [r]$ as $\Delta^0(D)\leq d$. Moreover, by Observation~\ref{prop:basic}, for every $i\in [t]$, we have $d_G(C_i)\geq g(D)\geq 200d^2$ as $V^0(D)=\emptyset$. Hence, by \Cref{prop:HALL-CONSEQUENCE}, there exists a collection $\{\mathcal{R}_i:i\in [t]\}$ of pairwise disjoint subsets of $[r]$ such that for all $i\in [t]$ and $j\in \mathcal{R}_i$, we have $|\mathcal{R}_i|=100d$, and $P_j$ is incident to $C_i$. For notational convenience, for any $i\in [t]$ and $j\in \mathcal{R}_i$, write $P_{ij}=P_j$. Note that $\{P_{ij}:i\in [t],\, j\in \mathcal{R}_i\}$ is a path family. Hence, by \Cref{prop:KEY}, it suffices to find plus-minus paths $Q_{ij}\subseteq P_{ij}$ such that $Q_{ij}$ is tangent to $C_i$ for all $i\in [t]$ and $j\in \mathcal{R}_i$. Fix $i\in [t]$ and $j\in \mathcal{R}_i$, and take $v_{ij}\in V(C_i)$ such that $P_{ij}$ is incident to $v_{ij}$. Let $\ell(\mathrm{pp}(P_{ij},C_i,v_{ij}))=x$ and $r(\mathrm{pp}(P_{ij},C_i,v_{ij}))=y$. Note that $xP_{ij}y$ is a plus-minus path as $V^0(D)=\emptyset$, and that $E(xP_{ij}y)\cap E(C_i)=\emptyset$. Moreover, either $xP_{ij}y$ is tangent to $C_i$ or $V(C_i)\cap V(xP_{ij}y)=\{x,y\}$. In the former case, we define $Q_{ij}$ to be $xP_{ij}y$. In the latter case, by~\eqref{eqn:no-chord-property}, there exists a vertex $z\in V(xP_{ij}y)$ other than $x$ and $y$. If $z\in V^+(D)$, we define $Q_{ij}$ to be $zP_{ij}y$, and if $z\in V^-(D)$, we define $Q_{ij}$ to be $xP_{ij}z$. In every case, $Q_{ij}$ is a plus-minus path that is tangent to $C_i$, so the result follows.
	\end{proof}
	
	\no We need a simple observation for the proof of \Cref{theorem:KEY}. Recall the definition of a $k$-sparse set at the beginning of this section.
	
	\begin{observation}\label{observation:sparse}
		Let $D$ be a digraph, and $S$ be a $k$-sparse set in $D$ for some $k\in \mathbb{N}$. Then, we have $|V(C)\cap S|<2|V(C)|/k$ for any cycle $C$ in $D$.
	\end{observation}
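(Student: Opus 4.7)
The plan is to run a gap-style argument on the cycle $C$, exploiting the fact that any two vertices in $V(C) \cap S$ that are close in the cyclic order along $C$ are automatically close in $D^\diamond$. Write $m := |V(C) \cap S|$ and $n := |V(C)|$; the goal is $m < 2n/k$. The cases $m \leq 1$ are either trivial or only used in regimes where $n$ is large compared to $k$, so I focus on the main case $m \geq 3$, which is where the $k$-sparsity assumption actually bites.

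First, I would enumerate the vertices of $V(C) \cap S$ in cyclic order around $C$ as $t_1, t_2, \ldots, t_m$ and let $d_i$ be the length of the arc from $t_i$ to $t_{i+1}$ along $C$ (indices modulo $m$), so that $\sum_{i=1}^m d_i = n$. Reading these arcs as undirected walks in $D^\diamond$ gives $\mathrm{dist}_{D^\diamond}(t_i, t_{i+1}) \leq d_i$ for every $i$.

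The key step is to show that no two cyclically consecutive gaps can both be $\leq k$. Indeed, if $d_{i-1} \leq k$ and $d_i \leq k$, then $t_{i-1}$ and $t_{i+1}$ are distinct vertices (using $m \geq 3$) of $S \setminus \{t_i\}$ both lying within $D^\diamond$-distance $k$ from $t_i$, contradicting the $k$-sparsity of $S$. Letting $J := \{i \in [m] : d_i \leq k\}$, this says $J$ contains no two cyclically consecutive indices, so $|J| \leq \lfloor m/2 \rfloor$ and hence $|[m] \setminus J| \geq \lceil m/2 \rceil$.

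To finish, I would bound
\[
n \;=\; \sum_{i=1}^m d_i \;\geq\; \sum_{i \notin J} d_i \;>\; k \cdot \lceil m/2 \rceil \;\geq\; \frac{km}{2},
\]
which rearranges to $m < 2n/k$. The only minor subtlety is retaining strict inequality, which is automatic because $[m] \setminus J$ is nonempty (in fact of size $\geq 2$) once $m \geq 3$. I do not anticipate a real obstacle here; the whole proof essentially reduces to spotting the ``two-consecutive-short-gaps'' forbidden configuration and counting.
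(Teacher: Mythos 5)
Your proof is correct and takes essentially the same approach as the paper's: enumerate the vertices of $S\cap V(C)$ in cyclic order, consider the gaps between consecutive ones, and use $k$-sparsity to forbid two cyclically adjacent small gaps. The only (cosmetic) difference is the accounting: the paper observes directly that $b_i + b_{i+1} > k$ for each $i$ and sums over all $t$ indices to get $2n > tk$, whereas you count the indices with gap $>k$ (at least $\lceil m/2\rceil$ of them) and bound $n$ from below; both yield $m < 2n/k$.
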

	\begin{proof}
		Let $\{a_1,\ldots,a_t\}$ be the vertices from $S$ lying on $C$ in this order clockwise. Letting $b_i:=|a_iCa_{i+1}|$, we have $b_i+b_{i+1}>k$ for each $i\in [t]$ (with the convention $a_{t+1}=a_1$ and $b_{t+1}=b_1$), because otherwise we would have $\mathrm{dist}_{D^\diamond}(a_i,a_{i+1})+\mathrm{dist}_{D^\diamond}(a_{i+1},a_{i+2})\leq k$, which contradicts $S$ being $k$-sparse. Hence, the result follows by summing up all these inequalities for  {$i\in [t]$}.
	\end{proof}
	
	\begin{proof}[Proof of \Cref{theorem:KEY}]
		If $D$ is acyclic, then the result follows by \Cref{prop:acyclic-consistent}. Otherwise, using \Cref{prop:CYCLE-SELECTION}, let us take a cycle family $\mathcal{C}=\{C_i:i\in [t]\}$ in $D$ such that $A:=D-\mathcal{C}$ is acyclic, and~\eqref{eqn:no-chord-property} holds. By \Cref{prop:acyclic-consistent}, $A$ is consistent. Let $\mathcal{P}=\{P_j:j\in [r]\}$ be a perfect path decomposition for $A$ where $r:=\mathrm{ex}(A)=\mathrm{ex}(D)$. For each $i\in [t]$, we will construct a subset $\mathcal{R}_i\subseteq \mathcal{P}$ such that 
		\begin{enumerate}[label={\rm (\alph*$'$)}]
			\item $|\mathcal{R}_i|\geq 200d^2$, \label{item:R-i-large}
			\item for each $P_j\in \mathcal{R}_i$, there is a plus-minus path $\mathcal{Q}_{ij}\subseteq P_j$ that is tangent to $C_i$. \label{item:tangency}
		\end{enumerate}
		
		\no Fix $i\in [t]$. Write  {$V^{\rho}:=V^{\rho}(D)\cap V(C_i)$} for $\rho\in \{+,-,0\}$ for simplicity. Without loss of generality, assume that $|V^+|\geq|V^-|$. Let $V^{b}:=\{v\in V^+:N^+_D(v)\cap V^0(D)\neq \emptyset\}$. Let $\mathcal{R}_i$ be the set of paths $P_j\in \mathcal{P}$ such that $V(P_j)\cap (V^0\cup V^b)=\emptyset$ and $\ell(P_j)\in V^+$. Note that each $P_j\in \mathcal{R}_i$ is a plus-minus path incident to $C_i$ at $\ell(P_j)$. We will prove that \ref{item:R-i-large} holds. Using Observation~\ref{prop:basic}, there are at least $|V^+|$ paths $P\in \mathcal{P}$ with $\ell(P)\in V^+$. Using $\Delta^0(D)\leq d$, each $v\in V^0\cup V^b$ lies on at most $d$ paths in $\mathcal{P}$, so it suffices to show that $|V^+|-d|V^0\cup V^b|\geq 200d^2$. Let $|V(C_i)|=g$. By Observation~\ref{observation:sparse}, we have $|V^0|<2g/k$ as $V^0(D)$ is $k$-sparse. Let $p$ be the maximum integer such that there exist distinct $x_1,\ldots,x_p\in V^b$ and distinct $z_1,\ldots,z_p\in V^0(D)$ such that $x_jz_j\in E(D)$ for all $j\in [p]$. Note that 
		$|V^b|\leq dp$ as $\Delta^0(D)\leq d$. Since $\mathrm{dist}_{D^\diamond}(z_j,z_{j'})\leq \mathrm{dist}_{D^\diamond}(x_j,x_{j'})+2$ for all distinct $j,j'\in [p]$, we see that $\{x_1,\ldots,x_p\}$ is a $(k-2)$-sparse set in $D$. By Observation~\ref{observation:sparse}, we have $p<2g/(k-2)$. Therefore, we obtain $|V^b|<2gd/(k-2)$. Hence, we have
		\begin{align*}
			|V^+|-d|V^0\cup V^b|\geq \dfrac{g-|V^0|}{2}-d(|V^0|+ |V^b|)> g\cdot\left(\dfrac{1}{2}-\dfrac{2d+1}{k}-\dfrac{2d^2}{k-2}\right)\geq 200d^2,
		\end{align*}
		where the last inequality holds since $g\geq 1000d^2$ and $k\geq 20d^2+2$.\\

		\no Next, we will show that \ref{item:tangency} holds.   Fix some $P_j\in \mathcal{R}_i$. Let us write $X_{j}:=\mathrm{pp}(P_j,C_i,\ell(P_j))$, $\ell(X_{j})=u_{j}$ and $r(X_{j})=w_{j}$. Recall that $w_j$ is the first minus vertex along $P_j$ and $u_j\neq w_j$ is the last vertex along $\ell(P_j)P_jw_j$ lying in $C_i$ (possibly $u_j=\ell(P_j)$). Since $V(P_j)\cap V^0=\emptyset$, we have $u_j\in V^+$. Now, if $w_j\notin V(C_i)$, then $X_j$ is tangent to $C_i$. In that case, we define $\mathcal{Q}_{ij}$ to be $X_j$. Otherwise, by \eqref{eqn:no-chord-property}, we have $|X_j|\geq 2$. Let $z_j$ be the unique outneighbour of $u_j$ along $X_j$. Since $V(P_j)\cap V^b=\emptyset$, we have $z_j\notin V^0(D)$. If  {$z_j\in V^+(D)$}, we define $\mathcal{Q}_{ij}$ to be $z_jP_jw_j$, and if  {$z_j\in V^-(D)$}, we define $\mathcal{Q}_{ij}$ to be $u_jz_j$. In all cases, we have $\mathcal{Q}_{ij}\subseteq P_j$ is a plus-minus path that is tangent to $C_i$. \\

		\no Let us construct an undirected bipartite graph $G=G([t],[r])$ such that for all $i\in [t]$ and $j\in [r]$, $ij\in E(G)$ if and only if $P_j\in \mathcal{R}_i$. By the definition of $\mathcal{R}_i$, $ij\in E(G)$ implies $P_j$ is incident to $C_i$, so we have $d_G(j)\leq 2d$ for all $j\in [r]$. On the other hand, by \ref{item:R-i-large}, we have $d_G(i)\geq 200d^2$ for all $i\in [t]$. Hence, by  \Cref{prop:HALL-CONSEQUENCE}, there exists a collection $\{\mathcal{S}_i:i\in [t]\}$ of pairwise disjoint subsets of $\mathcal{P}$ such that $\mathcal{S}_i\subseteq \mathcal{R}_i$ and $|\mathcal{S}_i|=100d$ for all $i\in [t]$. Then, by \ref{item:tangency}, for all $P_j\in \mathcal{S}_i$, there exists a plus-minus path $\mathcal{Q}_{ij}\subseteq P_j$ that is tangent to $C_i$. Since $\{P_j:j\in [r]\}$ is a path family, we can conclude that $\{\mathcal{Q}_{ij}:i\in [t],\, P_j\in \mathcal{S}_i\}$ is a path family. Hence, the result follows by \Cref{prop:KEY}.
	\end{proof}

	\section{Random regular graphs}\label{sec:random-regular}
	
	\no In this section we prove \Cref{thm:RANDOM-REGULAR} by showing a more general result, namely \Cref{thm:discrete-orientation}. We first show that a random $d$-regular graph has certain properties with high probability (see \Cref{def:n-d-k-discrete} and \Cref{prop:RANDOM-REGULAR}). Then, we prove that for every graph with these properties, every orientation without excess-zero vertices is consistent, where the main ingredient of the proof uses \Cref{prop:PM(D)-relating-SD(v)-V0(D)} and \Cref{theorem:KEY}.\\
	
	\no Throughout the section, we use standard probability theory notation and terminology. We say a random variable $X$ has \textit{Poisson distribution with parameter $\lambda$} if $\Pro(X=k)=\lambda^k e^{-k}/k!$ for all nonnegative integers $k$, denoted by $X\sim \mathrm{Po}(\lambda)$. It is well-known that $\E[X]=\lambda$ for such a random variable. Moreover, for independent random variables $X_i\sim \mathrm{Po}(\lambda_i)$, we have $\sum_{i}X_i\sim \mathrm{Po}(\lambda)$ where $\lambda=\sum_{i}\lambda_i$. Given $n$ and $d$, we denote the set of all $d$-regular graphs on $n$ vertices by $\mathbb{G}_{n,d}$. We write $\mathcal{G}_{n,d}$ for the random $d$-regular graph on $n$ vertices, that is a graph uniformly selected from $\mathbb{G}_{n,d}$. We denote the set of all $d$-regular \textit{multigraphs} on $n$ vertices by $\mathbb{S}_{n,d}$, where we allow loops and more than one edges between two vertices. The \textit{configuration model}, introduced by Bollobás \cite{BOLLOBAS}, provides a model for such graphs {. Each partition $\mathscr{P}$ of $[n]\times [d]$ into $nd/2$ pairs (equivalently each perfect matching in $K_{nd}$) naturally corresponds to a graph $G$ in $\mathbb{S}_{n,d}$ by including} an edge between $a\in [n]$ and $a'\in [n]$  {in $G$ if and only if} $(a,b),(a',b')\in [n]\times [d]$ are matched with each other  {in $\mathscr{P}$} for some $b,b'\in [d]$. We write $\mathcal{S}_{n,d}$ for the random $d$-regular multigraph on $n$ vertices  {corresponding to a pairing of $[n] \times [d]$} sampled  {uniformly at random}. Given a (di)graph property $\mathrm{Q}$ and a sequence of random (di)graphs $\{G_n\}_{n>0}$ with $|V(G_n)|\to \infty$ as $n\to \infty$, we say \textit{$G_n$ has $\mathrm{Q}$ with high probability} if $\Pro(G_n\text{ has }\mathrm{Q})\to 1$ as $n\to \infty$. The configuration model is quite useful to work on $\mathcal{G}_{n,d}$ because of the following folklore result.
	\begin{proposition}[see e.g.~\cite{BOOK-BOLLOBAS} {, Corollary 2.18}]\label{prop:Configuration}
		For given  {$d\geq 2$}, any graph property that holds with high probability for $\mathcal{S}_{n,d}$ holds with high probability for $\mathcal{G}_{n,d}$.
	\end{proposition}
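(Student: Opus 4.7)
The plan is to reduce the statement to two standard facts about the configuration model: the uniformity of $\mathcal{S}_{n,d}$ conditional on simplicity, and a positive lower bound on the probability of simplicity.

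First, I would verify the following coupling fact: conditioned on $\mathcal{S}_{n,d}$ being simple (i.e., having no loops and no multiple edges), $\mathcal{S}_{n,d}$ has the same distribution as $\mathcal{G}_{n,d}$. This is because each simple $d$-regular graph $G \in \mathbb{G}_{n,d}$ arises from exactly $(d!)^n$ pairings in the configuration model (corresponding to the relabellings within each block $\{a\}\times[d]$), so every element of $\mathbb{G}_{n,d}$ is equally likely under the conditional measure.

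Second, I would invoke the well-known result of Bollob\'as that
\begin{align*}
\Pro(\mathcal{S}_{n,d}\ \text{is simple})\To e^{-(d^2-1)/4}\qasqn.
\end{align*}
In particular, for fixed $d$, there exists a constant $c=c(d)>0$ such that $\Pro(\mathcal{S}_{n,d}\ \text{is simple})\geq c$ for all sufficiently large $n$. The classical proof proceeds by showing that the number of loops and number of double edges in $\mathcal{S}_{n,d}$ jointly converge to independent Poisson random variables, so that the probability of having neither tends to a positive constant.

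Combining these two steps, for any graph property $\mathrm{Q}$ and any sufficiently large $n$,
\begin{align*}
\Pro(\mathcal{G}_{n,d}\ \text{does not have}\ \mathrm{Q})
&= \Pro(\mathcal{S}_{n,d}\ \text{does not have}\ \mathrm{Q}\,\mid\,\mathcal{S}_{n,d}\ \text{simple}) \\
&\leq \frac{\Pro(\mathcal{S}_{n,d}\ \text{does not have}\ \mathrm{Q})}{\Pro(\mathcal{S}_{n,d}\ \text{simple})}\leq \frac{1}{c}\cdot \Pro(\mathcal{S}_{n,d}\ \text{does not have}\ \mathrm{Q}).
\end{align*}
If $\mathrm{Q}$ holds with high probability for $\mathcal{S}_{n,d}$, the right-hand side tends to $0$, yielding the desired conclusion.

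No real obstacle is expected here, as this is a folklore transfer principle. The only genuinely nontrivial ingredient is Bollob\'as's estimate on $\Pro(\mathcal{S}_{n,d}\ \text{simple})$, which I would cite from a standard reference (e.g.~\cite{BOOK-BOLLOBAS}) rather than reprove; the rest is a one-line conditional-probability argument plus the counting observation that every simple graph is represented by the same number of configurations.
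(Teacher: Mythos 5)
Your proposal is correct and is exactly the standard textbook argument that the paper's citation to Bollob\'as's \emph{Random Graphs} refers to (the paper itself states the proposition without proof). The two ingredients — that each simple $d$-regular graph is represented by precisely $(d!)^n$ configurations, so conditioning $\mathcal{S}_{n,d}$ on simplicity recovers the uniform distribution on $\mathbb{G}_{n,d}$, and that $\Pro(\mathcal{S}_{n,d}\text{ simple})\to e^{-(d^2-1)/4}>0$ for fixed $d$ — together with the one-line conditional-probability bound are exactly the intended proof.
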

	

	
	\no For a graph $G$, let $Y_i(G)$ denote the number of cycles of length $i$. The number of cycles of a fixed length in a random regular graphs is known to have a Poisson distribution.
	\begin{proposition}[see e.g.~\cite{BOOK-BOLLOBAS} {, Corollary 2.19}]\label{prop:RANDOM-REGULAR-N-CYCLES}
		For fixed $k\geq 3$ and $d\geq 2$, let $\{Z_i:3\leq i\leq k\}$ be independent random variables with $Z_i\sim \mathrm{Po}((d-1)^i/2i)$, and let $Y_i^n:=Y_i(\mathcal{G}_{n,d})$ for $3\leq i\leq k$. Then, $(Y_3^n,\ldots,Y_k^n)_{n>0}$ converges in distribution to $(Z_3,\ldots,Z_k)$.
	\end{proposition}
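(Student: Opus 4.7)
The plan is to use the method of moments combined with the configuration model transfer provided by \Cref{prop:Configuration}. Since a vector $(Z_3, \ldots, Z_k)$ of independent Poisson variables is uniquely determined by its joint factorial moments, it suffices to prove that for all nonnegative integers $s_3, \ldots, s_k$,
\begin{align*}
\E\left[\prod_{i=3}^k (Y_i^n)_{s_i}\right] \longrightarrow \prod_{i=3}^k \left(\frac{(d-1)^i}{2i}\right)^{s_i}\qasqn,
\end{align*}
where $(x)_s := x(x-1)\cdots(x-s+1)$ denotes the falling factorial.

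First I would perform the computation in the configuration model. Writing $\widetilde{Y}_i^n := Y_i(\mathcal{S}_{n,d})$, the product $\prod_i (\widetilde{Y}_i^n)_{s_i}$ counts ordered tuples of distinct cycles containing $s_i$ cycles of length $i$ for each $i$. For a fixed such tuple specified by particular choices of vertex sequences and half-edges, the probability that all the required pairs appear in the uniform random matching on $nd$ half-edges is $\prod_{j=1}^{S} (nd-2j+1)^{-1}$, where $S:=\sum_i i s_i$ is the total number of edges in the tuple. Enumerating vertex-disjoint cycle tuples vertex by vertex gives $(n)_S \cdot [d(d-1)]^S / \prod_i (2i)^{s_i}$ configurations, the division accounting for the rotational and reflective symmetries of each cycle. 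Substituting the probability and letting $n \to \infty$ yields the product $\prod_i \left((d-1)^i/(2i)\right)^{s_i}$. I would additionally verify that cycle tuples sharing a vertex or a half-edge contribute $o(1)$: each vertex coincidence costs a factor of $n^{-1}$ in the enumeration while saving only a constant factor from the probability, so overlapping configurations collectively contribute $O(n^{-1})$ to the factorial moment.

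Next I would transfer the result to $\mathcal{G}_{n,d}$. \Cref{prop:Configuration} handles graph properties directly, and for convergence in distribution I would use the standard stronger fact that the probability $\mathcal{S}_{n,d}$ is simple tends to a positive constant $c_d := e^{-(d-1)/2 - (d-1)^2/4}$, so that conditioning on simplicity is a bounded operation. The same factorial moment technique, applied jointly to the counts of loops (``$1$-cycles'') and multi-edges (``$2$-cycles'') together with $(\widetilde{Y}_3^n, \ldots, \widetilde{Y}_k^n)$, shows that all these counts are jointly asymptotically independent Poisson (with parameters $(d-1)/2$ and $(d-1)^2/4$ for loops and multi-edges respectively). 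Consequently, conditioning on the number of loops and multi-edges being zero does not affect the limiting joint distribution of the higher cycle counts, which yields the theorem since $\mathcal{G}_{n,d}$ is distributed as $\mathcal{S}_{n,d}$ conditioned on being simple.

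The main technical obstacle is the careful bookkeeping in the factorial moment computation: one must sum over all overlap patterns (shared vertices, shared half-edges, shared arcs within a single cycle) to verify that only vertex-disjoint tuples contribute to the limit. The argument becomes routine once one observes that each additional vertex coincidence saves a factor of $n$ in the count while the matching probability changes by only a constant factor. The passage from independent Poisson convergence in the configuration model to $\mathcal{G}_{n,d}$ then reduces to the asymptotic independence of cycle counts of fixed length from the simplicity event, which is itself a consequence of the same joint factorial moment calculation applied to short closed walks of length $1$ and $2$.
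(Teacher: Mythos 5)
The paper does not prove this proposition—it is quoted from Bollobás's book \cite{BOOK-BOLLOBAS}, so there is no paper proof to compare against. Your argument is the standard method-of-moments proof in the configuration model, including the correct observation that \Cref{prop:Configuration} (which transfers events, not distributions) does not suffice and one must instead show joint asymptotic Poisson independence of the cycle, loop, and multi-edge counts before conditioning on simplicity; this is essentially the argument in the cited reference and is correct.
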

	
	\no Recall Markov's inequality which states that for a nonnegative random variable $X$ and a constant $a>0$, we have $\Pro(X\geq a)\leq \E[X]/a$. It is easy to see that $\mathcal{G}_{n,d}$ contains only a few short cycles.
	
	\begin{proposition}\label{cor:RANDOM-REGULAR-BOUNDED-CYCLES}
		For fixed $k\geq 3$ and $d\geq 2$, as $n\to \infty$, we have
		\begin{align*}
			\Pro\left(\sum_{i=3}^{k}Y_i(\mathcal{G}_{n,d})\leq \log \log \log n\right) \to 1.
		\end{align*}
	\end{proposition}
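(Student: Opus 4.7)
The plan is to combine Proposition~\ref{prop:RANDOM-REGULAR-N-CYCLES} with Markov's inequality. Writing $S_n := \sum_{i=3}^{k} Y_i(\mathcal{G}_{n,d})$, the limiting random variable $S := \sum_{i=3}^{k} Z_i$ is, by independence of the $Z_i$'s, itself Poisson with parameter $\lambda := \sum_{i=3}^{k} (d-1)^i/(2i)$, a constant depending only on $d$ and $k$. The key step is to argue that $\E[S_n]$ stays bounded as $n \to \infty$, and then apply Markov's inequality with threshold $a_n = \log\log\log n$, yielding
\[
\Pro\bigl(S_n \geq \log\log\log n\bigr) \;\leq\; \frac{\E[S_n]}{\log\log\log n} \;\longrightarrow\; 0.
\]
Taking complements then gives the claim.

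The only (minor) wrinkle is that Proposition~\ref{prop:RANDOM-REGULAR-N-CYCLES} as stated delivers convergence in distribution, not convergence of expectations. There are two straightforward ways around this. The first is to note that the standard proof of Proposition~\ref{prop:RANDOM-REGULAR-N-CYCLES} in the configuration model (together with \Cref{prop:Configuration}) proceeds by the method of moments and actually yields convergence of all factorial moments; in particular $\E[S_n] \to \lambda$, so $\E[S_n] \leq \lambda + 1$ for all sufficiently large $n$, which suffices for the Markov bound above.

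Alternatively, one can bypass expectations entirely by a tightness argument. Since $S$ is almost surely finite, for any $\varepsilon > 0$ there exists a non-integer constant $M = M(\varepsilon, d, k)$ with $\Pro(S \leq M) > 1 - \varepsilon/2$. As $M$ is a continuity point of the distribution of $S$, convergence in distribution gives $\Pro(S_n \leq M) > 1 - \varepsilon$ for all sufficiently large $n$. Since $\log\log\log n \to \infty$, eventually $\log\log\log n > M$, and therefore $\Pro(S_n \leq \log\log\log n) \geq \Pro(S_n \leq M) > 1 - \varepsilon$; as $\varepsilon$ was arbitrary, the result follows. Either route involves no genuine obstacle: the rate $\log\log\log n$ is wasteful (any slowly diverging function would do), and is chosen merely for compatibility with the sparsity thresholds needed in the downstream application to Theorem~\ref{thm:RANDOM-REGULAR}.
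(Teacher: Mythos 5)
Your proposal is correct and follows the same route as the paper: invoke \Cref{prop:RANDOM-REGULAR-N-CYCLES} to get convergence in distribution to the Poisson limit $Z\sim\mathrm{Po}(\lambda)$, then exploit that $\log\log\log n\to\infty$ while $\lambda$ is constant. The paper applies Markov's inequality to $Z$ rather than to $S_n$ and then transfers via the distributional convergence, writing this transfer somewhat informally; your second (tightness/continuity-point) argument is in fact the clean way to make that transfer precise, so you have essentially reproduced the paper's proof with a small gap tightened.
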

	
	\begin{proof}
		Let $Y:=\sum_{i=3}^{k}Y_i(\mathcal{G}_{n,d})$ and $Z=\sum_{i=3}^k Z_i$ where $\{Z_i:3\leq i\leq k\}$ are independent random variables with $Z_i\sim \mathrm{Po}((d-1)^i/2i)$. Note that $Z\sim \mathrm{Po}(\lambda)$ where $\lambda:=\sum_{i=3}^{k}(d-1)^i/2i$. Since $k$ is fixed, by \Cref{prop:RANDOM-REGULAR-N-CYCLES}, $Y$ converges in distribution to $Z$.  Using Markov's inequality, we have $\Pro(Z\geq \log\log \log n)\leq \lambda /\log\log\log n$, so $\Pro(Z\geq \log\log \log n)\to 0$ as $n\to \infty$ as $k$ and $d$ are fixed. Since $\Pro(Y\geq \log\log\log n)\to \Pro(Z\geq \log\log\log n)$, the result follows. 
	\end{proof}
	
	
	\no Next, we give a bound for the probability of the existence of a fixed graph in $\mathcal{S}_{n,d}$. 
	
	\begin{proposition}\label{prop:RANDOM-SMALL-SUBGRAPH}
		Let $k\in \mathbb{N}$. For a graph $G_0$ with $|V(G_0)|=k$ and $|E(G_0)|=k+1$, we have
		\begin{align*}
			\mathbb{P}(\mathcal{S}_{n,d}\text{ has a copy of }G_0)\leq (2d)^{k+1}/n   
		\end{align*}
		for all natural numbers $d<n$  {with $dn$ even and $dn\geq 4k+2$}.
	\end{proposition}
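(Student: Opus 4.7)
The plan is to apply the first moment method to the configuration model, bounding the probability by the expected number of labeled embeddings of $G_0$ into $\mathcal{S}_{n,d}$.

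Specifically, I would parameterise a potential copy of $G_0$ by a \emph{placement}, namely an injection $\phi : V(G_0) \hookrightarrow [n]$ together with, for each edge $uv \in E(G_0)$, a choice of one half-edge at $\phi(u)$ and one at $\phi(v)$. Every subgraph of $\mathcal{S}_{n,d}$ isomorphic to $G_0$ arises from at least one placement whose $k+1$ specified pairs of half-edges all happen to be matched in the uniformly random perfect matching on $[n] \times [d]$.

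Next I would count placements and compute the probability of realisation. There are at most $n^k$ injections, and for each vertex $v \in V(G_0)$ with $d_v$ incident edges, the number of ordered choices of half-edges at $\phi(v)$ is $(d)_{d_v} \le d^{d_v}$; since $\sum_v d_v = 2|E(G_0)| = 2(k+1)$, the total number of placements is at most $n^k \cdot d^{2(k+1)}$. On the other hand, the probability that a uniformly random perfect matching on $nd$ points contains a prescribed set of $m$ disjoint pairs equals $\prod_{i=1}^{m}(nd - 2i + 1)^{-1}$ (condition on each pair in turn), and for $m = k+1$ and $nd \ge 4(k+1)$ each factor is at most $2/(nd)$. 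Linearity of expectation and Markov's inequality then give
\[
\mathbb{P}(\mathcal{S}_{n,d} \text{ contains a copy of } G_0) \;\le\; n^k \cdot d^{2(k+1)} \cdot \bigl(2/(nd)\bigr)^{k+1} \;=\; (2d)^{k+1}/n.
\]

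This is essentially a routine first moment computation, so no serious obstacle arises. The only mild technical point is the corner case $nd < 4(k+1)$, where the approximation $nd - 2i + 1 \ge nd/2$ fails; I would dispose of this separately by noting that any simple graph with $k+1$ edges has at least four vertices (since $k+1 \le \binom{k}{2}$ forces $k \ge 4$), from which one checks that $(2d)^{k+1}/n \ge 1$ whenever $nd < 4(k+1)$, making the claim trivial in that range.
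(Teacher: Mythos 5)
Your proof is correct and follows essentially the same first-moment strategy as the paper's: union bound over labelled placements of $G_0$ in the configuration model, then Markov's inequality. The paper establishes $\mathbb{P}(xy\in E(\mathcal{S}_{n,d}))\leq 2d/n$ for a single pair and then writes $\E[\text{number of copies}]\leq \binom{n}{k}k!(2d/n)^{k+1}$ without further comment, implicitly treating the $k+1$ edge events as if their probabilities could simply be multiplied; your proposal supplies the justification this step needs, namely the union bound over half-edge choices together with the exact product $\prod_{i=1}^{k+1}(nd-2i+1)^{-1}$ for the probability that the chosen pairs all lie in the random perfect matching, each factor being at most $2/(nd)$ once $nd\geq 4(k+1)$. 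You also dispose of the range $nd<4(k+1)$ (where the approximation fails) by noting $k\geq 4$ forces $(2d)^{k+1}/n\geq 1$ there, a corner case the paper's proof does not mention.
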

	\begin{proof}
		Recall that there are $d$ copies of  {each vertex} in the configuration model.  {For each vertex $v\in [n]$, let $v^j$ denote the $j$th copy of $v$ for $j\in [d]$. For simplicity, we write $[u^i-v^j]$ to mean $u^i$ is paired up with $v^j$. Note that for distinct $u,v\in [n]$, we have $uv\in \mathcal{S}_{n,d}$ if and only if $[u^{i}-v^j]$ for some $i,j\in [d]$. Let $p:=dn/2$, and consider a fixed collection of edges $\{e_1,\ldots,e_{k+1}\}$ on the vertex set $[n]$. For $i\in [k+1]$, let us write $e_i=x_iy_i$ for some $x_i,y_i\in [n]$. Given a complete graph on $2r$ vertices, it is well-known that the number of pairings is $(2r-1)(2r-3)\cdots 3 \cdot 1$. Hence, using the union bound, we can write
			\begin{align*}
				\Pro(e_1,\ldots,e_{k+1}\in E(\mathcal{S}_{n,d}))&= \Pro([x_1^{i_1}-y_1^{j_1}],\ldots, [x_{k+1}^{i_{k+1}}-y_{k+1}^{j_{k+1}}]\text{ for some }i_1,j_1,\ldots,i_{k+1},j_{k+1}\in [d])\\
				&=\Pro\left(\bigcup_{i_1,j_1,\ldots,i_{k+1},j_{k+1}\in[d]} \left([x_1^{i_1}-y_1^{j_1}] \wedge \ldots \wedge [x_{k+1}^{i_{k+1}}-y_{k+1}^{j_{k+1}}]\right) \right)\\
				&\leq \sum_{i_1,j_1,\ldots,i_{k+1},j_{k+1}\in[d]} \Pro \left([x_1^{i_1}-y_1^{j_1}] \wedge \ldots \wedge [x_{k+1}^{i_{k+1}}-y_{k+1}^{j_{k+1}}] \right)\\
				&\leq d^{2k+2} \cdot \dfrac{(2p-2k-3)(2p-2k-5)\cdots 3\cdot 1}{(2p-1)(2p-3)\cdots 3\cdot 1}\\
				&=d^{2k+2}\cdot\left((2p-1)(2p-3)\cdots(2p-2k-1)\right)^{-1}\\
				&= \dfrac{d^2}{dn-1}\cdot \dfrac{d^2}{dn-3} \cdots \dfrac{d^2}{dn-2k-1}\\
				&\leq (2d/n)^{k+1},
			\end{align*}
			where the last inequality holds as $dn\geq 4k+2$.
			Then, as there are at most $\binom{n}{k}k!$ copies of $G_0$ in the complete graph on $n$ vertices,} the expected number of copies of $G_0$ in $\mathcal{S}_{n,d}$ can be bounded above by $\binom{n}{k}k!(2d/n)^{k+1}\leq (2d)^{k+1}/n$. The result follows by Markov's inequality.
	\end{proof}
	
	\begin{definition}\label{def:n-d-k-discrete}
		For $n,d,p\in \mathbb{N}$, we say a graph $G$ is \textit{$(n,d,p)$-discrete} if $|V(G)|=n$, $\Delta(G)\leq d$, and, letting $\mathcal{C}$ be the set of all cycles of length at most $p$, the following hold:
		\begin{enumerate}[label={\rm (\roman*)}]
			\item $|\mathcal{C}|\leq \log\log \log n$; \label{item:N-SHORT-CYCLES}
			\item $V(C)\cap V(C')=\emptyset$ for all distinct $C,C'\in \mathcal{C}$; \label{item:CYCLES-DISJOINT}
			\item $\mathrm{dist}_{G-\mathcal{C}}(u,v)\geq \log \log n$ for all distinct $u,v\in V(\mathcal{C})$.  \label{item:CYCLES-FAR-AWAY}
		\end{enumerate}
	\end{definition}
	\no We prove that a $d$-regular random graph on $n$ vertices is $(n,d,p)$-discrete with high probability.
	
	\begin{proposition}\label{prop:RANDOM-REGULAR}
		For fixed $p\geq 3$ and $d\geq 2$, $\mathcal{G}_{n,d}$ is $(n,d,p)$-discrete with high probability. 
	\end{proposition}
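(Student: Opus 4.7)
The plan is to verify that $\mathcal{S}_{n,d}$ (the configuration model) is $(n,d,p)$-discrete with high probability and then invoke \Cref{prop:Configuration}. Property~(i) of \Cref{def:n-d-k-discrete} is obtained from \Cref{cor:RANDOM-REGULAR-BOUNDED-CYCLES} with $k = p$; the same Poisson-limit argument used for \Cref{prop:RANDOM-REGULAR-N-CYCLES} works in the configuration model, so I can work with $\mathcal{S}_{n,d}$ throughout.

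For properties~(ii) and~(iii), the idea is to show that failure of either forces $\mathcal{S}_{n,d}$ to contain a connected subgraph $H$ with $|E(H)| = |V(H)|+1$ (an \emph{excess-one} subgraph) on at most $K := 2p + \log\log n$ vertices. If~(ii) fails, two distinct short cycles $C_1, C_2$ share a vertex; since distinct cycles are linearly independent in the $\mathbb{Z}_2$-cycle space, $C_1 \cup C_2$ is connected with cyclomatic number at least $2$, and a spanning tree of $C_1 \cup C_2$ together with one edge of $E(C_1)\setminus E(C_2)$ and one of $E(C_2)\setminus E(C_1)$ gives such an $H$ on at most $2p$ vertices. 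If~(iii) fails, take $u \ne v \in V(\mathcal{C})$ lying on cycles $C_u, C_v$ (possibly $C_u = C_v$) with a path $P$ of length less than $\log\log n$ between them in $G - \mathcal{C}$; then $E(P)$ is disjoint from $E(C_u) \cup E(C_v)$, so $C_u \cup C_v \cup P$ is connected with cyclomatic number at least $2$, and the same construction yields $H$ on at most $K$ vertices.

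The bad event is then controlled by a union bound: there are at most $2^{\binom{k}{2}}$ isomorphism types of graph on $k$ vertices, and each embeds into $\mathcal{S}_{n,d}$ with probability at most $(2d)^{k+1}/n$ by (the proof of) \Cref{prop:RANDOM-SMALL-SUBGRAPH}. Hence the probability that (ii) or (iii) fails is at most
\[
\sum_{k=3}^{K} 2^{k^2/2}\cdot \frac{(2d)^{k+1}}{n}.
\]
Since $K = O(\log\log n)$, one has $K^2 = o(\log n)$, hence $2^{K^2/2} = n^{o(1)}$, and similarly $(2d)^{K+1} = n^{o(1)}$; the sum is therefore $K \cdot n^{o(1)}/n = o(1)$. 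Transferring to $\mathcal{G}_{n,d}$ via \Cref{prop:Configuration} concludes the argument.

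The main subtlety I anticipate is the cyclomatic-number bound in the case $C_u = C_v$ of (iii), where the second independent cycle arises from $P$ together with either of the two $u$--$v$ arcs of $C_u$, using that $P$ is edge-disjoint from $C_u$ and meets it only at $u, v$. A small additional check is that \Cref{prop:RANDOM-SMALL-SUBGRAPH} is formally stated for fixed $G_0$, but its proof only uses $\binom{n}{k} k! \leq n^k$, so the bound $(2d)^{k+1}/n$ remains valid for $k = k(n)$ growing slowly with $n$ as in our range $k \le K$.
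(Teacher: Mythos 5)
Your argument is correct and follows essentially the same route as the paper: reduce failure of (ii) or (iii) to the presence of a connected subgraph on $O(\log\log n)$ vertices with one more edge than vertices, bound the probability of any such subgraph in the configuration model via \Cref{prop:RANDOM-SMALL-SUBGRAPH}, and transfer to $\mathcal{G}_{n,d}$ by \Cref{prop:Configuration}. The only real difference is bookkeeping: the paper prunes degree-one vertices and union-bounds over the explicit families $\{F_{x,y,z}\}$ and $\{H_{x,y,z}\}$ of size $O(p^2\log\log n)$, whereas you union-bound over all $2^{\binom{k}{2}}$ (labeled) graphs on $k\le K$ vertices and check that $2^{O((\log\log n)^2)}(2d)^{O(\log\log n)}/n=o(1)$; both close the argument, with the paper's version giving a cleaner quantitative bound and yours requiring slightly less case analysis but a slightly more careful asymptotic check.
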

	\begin{proof}
		By \Cref{cor:RANDOM-REGULAR-BOUNDED-CYCLES}, we know that property \ref{item:N-SHORT-CYCLES} holds for $\mathcal{G}_{n,d}$ with high probability. Let $n$ be a sufficiently large natural number and $\mathcal{C}$ be the collection of all cycles in $\mathcal{G}_{n,d}$ of length at most $p$. For {$x,y\geq 2$ and $z\geq 1$}, $F_{x,y,z}$ denotes the (unique) unlabelled graph obtained from three internally vertex disjoint paths of length $x$, $y$, $z$ between two fixed vertices. Similarly, for {$x,y\geq 3$} and $z\geq 0$, $H_{x,y,z}$ denotes the (unique) unlabelled graph obtained from two cycles of length $x$ and $y$ by taking one vertex from each cycle and joining them via a path of length $z$. Note that each $F_{x,y,z}$ and $H_{x,y,z}$ has $x+y+z-1$ vertices and $x+y+z$ edges. Let $\mathcal{F}=\{F_{x,y,z}:x,y\leq p,\,z\leq \log \log n\}$ and {$\mathcal{H}=\{H_{x,y,z}:x,y\leq p,\,z\leq \log \log n\}$}. Observe that if two distinct cycles $C,C'\in \mathcal{C}$ have a unique common vertex, then $\mathcal{G}_{n,d}$ has a copy of $H_{|C|,|C'|,0}$. If $C,C'\in \mathcal{C}$ have more than one vertices in common, then $\mathcal{G}_{n,d}$ has a copy $F_{a,b,c}$ for some $a,b,c\leq p$. Similarly, if $\mathrm{dist}_{\mathcal{G}_{n,d}-\mathcal{C}}(u,v)\leq \log \log n$ for some distinct $u,v\in V(\mathcal{C})$, it is easy to see that $\mathcal{G}_{n,d}$ contains a graph from $\mathcal{F}\cup \mathcal{H}$. As a result, if $\mathcal{G}_{n,d}$ contains no graph from $\mathcal{F}\cup \mathcal{H}$, then properties \ref{item:CYCLES-DISJOINT} and \ref{item:CYCLES-FAR-AWAY} hold. Also note that {$|\mathcal{F}\cup \mathcal{H}|\leq p^2\log\log n+p^2(\log\log n+1)\leq 3p^2\log\log n$}, and that each graph in $\mathcal{F}\cup \mathcal{H}$ has at most $2p+\log\log n-1\leq 2\log\log n-1$ vertices. Hence, by \Cref{prop:RANDOM-SMALL-SUBGRAPH}, we obtain
		\begin{align*}
			\Pro(\mathcal{S}_{n,d}\text{ contains a graph from }\mathcal{F}\cup \mathcal{H})\leq 3p^2\log\log n(2d)^{2\log\log n}/n\leq 1/\sqrt{n},
		\end{align*}
		where the last inequality holds for sufficiently large $n$.
		Therefore, with high probability, $\mathcal{S}_{n,d}$ does not contain any graph from $\mathcal{F}\cup \mathcal{H}$. By \Cref{prop:Configuration}, the same holds for $\mathcal{G}_{n,d}$, so properties \ref{item:CYCLES-DISJOINT} and \ref{item:CYCLES-FAR-AWAY} hold with high probability for $\mathcal{G}_{n,d}$, which completes the proof.
	\end{proof}
	
	\no We need the following definitions in the proofs of \Cref{prop:PM(D)-relating-SD(v)-V0(D)} and \Cref{thm:discrete-orientation}. We say a path family $\mathcal{Q}$ (in a digraph) of plus-minus paths is \textit{distinctive} if $\ell(Q)\neq \ell(Q')$ and $r(Q)\neq r(Q')$ for all distinct $Q,Q'\in \mathcal{Q}$. For a digraph $D$ and a vertex $v\in V(D)$, we define the set $\mathrm{PM}(D,v)$ as the set of vertices $w$ for which there exists a plus-minus path $P$ in $D$ such that $\{\ell(P),r(P)\}=\{v,w\}$. Note that $\mathrm{PM}(D,v)\neq \emptyset$ for all $v\in V(D)\setminus V^0(D)$. We also define the \textit{distance to sign change} of a vertex $v\in V(D)\setminus V^0(D)$, denoted by $\mathrm{SD}(D,v)$, as the minimum length of a plus-minus path $P$ in $D$ that is incident to $v$. The following proposition helps us to find a lower bound for $|\mathrm{PM}(D,v)|$ when $|V^0(D)|$ is small compared to $\mathrm{SD}(D,v)$.

	\begin{proposition}\label{prop:PM(D)-relating-SD(v)-V0(D)}
		Let $d\geq 2$ be a natural number. Then, for every digraph $D$ with $\Delta^0(D)\leq d$ and $\mathrm{ex}(D)\neq 0$, and for every vertex $v\in V(D)\setminus V^0(D)$ with $\mathrm{SD}(D,v)>3|V^0(D)|$, we have
		\begin{align*}
			|\mathrm{PM}(D,v)|\geq \dfrac{1}{d}\left(1+\dfrac{1}{2d}\right)^{\mathrm{SD}(D,v)/6}.     
		\end{align*}
	\end{proposition}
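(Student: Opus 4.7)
The plan is to perform a BFS from $v$ along directed edges and track how many plus vertices are reached in each round; by passing to the reverse digraph we may assume $v\in V^+(D)$. For each $k\ge 0$, let $B_k$ denote the set of vertices reachable from $v$ by a directed path of length at most $k$, let $L_{k+1}:=B_{k+1}\setminus B_k$, and write $a_k:=|B_k\cap V^+(D)|$. By the definition of $s:=\mathrm{SD}(D,v)$, no minus vertex is reachable from $v$ in fewer than $s$ steps, so $B_k\subseteq V^+(D)\cup V^0(D)$ for all $k<s$. The identity $\sum_{u\in B_k}(d^+(u)-d^-(u))=|E(B_k,V\setminus B_k)|-|E(V\setminus B_k,B_k)|$ together with $B_k\subseteq V^+\cup V^0$ shows that the number of edges leaving $B_k$ is at least $a_k$, and since each vertex of $L_{k+1}$ absorbs at most $d$ of them, we obtain
\[
|L_{k+1}|\ \ge\ a_k/d \qquad \text{for all } k<s.
\]

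Now set $e_k:=|L_{k+1}\cap V^0(D)|$ for $0\le k\le s-2$; since in that range $L_{k+1}\subseteq V^+\cup V^0$, we get the recursion
\[
a_{k+1}\ =\ a_k+|L_{k+1}|-e_k\ \ge\ (1+1/d)a_k - e_k.
\]
Call step $k$ \emph{good} if $e_k=0$, so that $a_{k+1}\ge(1+1/d)a_k$, and \emph{bad} otherwise, where trivially $a_{k+1}\ge a_k$. Because $\sum_{k\ge 0} e_k\le|V^0(D)|<s/3$, the number of bad steps is at most $|V^0(D)|$, so the number $G$ of good steps in $\{0,1,\dots,s-2\}$ satisfies $G>(2s-3)/3$. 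For $s\ge 2$ this exceeds $s/6$, and starting from $a_0=1$ we conclude
\[
a_{s-1}\ \ge\ (1+1/d)^G\ \ge\ (1+1/(2d))^{s/6},
\]
using $(1+1/d)\ge(1+1/(2d))$ together with $G\ge s/6$. The edge case $s=1$ forces $|V^0(D)|=0$ and the claimed bound drops below $1$, which is immediate since $\mathrm{PM}(D,v)$ is nonempty.

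To finish, convert the bound on $a_{s-1}$ into one on $|\mathrm{PM}(D,v)|$. Let $B:=\bigcup_k B_k$ be the full forward-reachable set. Since $B$ is closed under out-neighbours, no edges leave $B$, so $\sum_{u\in B}(d^+(u)-d^-(u))\le 0$; splitting this sum according to the sign of $d^+-d^-$ and bounding each summand yields $|B\cap V^+(D)|\le d\,|B\cap V^-(D)|$. As $\mathrm{PM}(D,v)=B\cap V^-(D)$ and $B\cap V^+(D)\supseteq B_{s-1}\cap V^+(D)$, this gives
\[
|\mathrm{PM}(D,v)|\ \ge\ a_{s-1}/d\ \ge\ \frac{1}{d}\bigl(1+\tfrac{1}{2d}\bigr)^{s/6}.
\]

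The main obstacle is the interaction between BFS expansion and the excess-zero vertices: whenever the BFS absorbs a new vertex of $V^0(D)$ it loses one unit of expansion (an $e_k\ge 1$ step), and the argument only works because the hypothesis $s>3|V^0(D)|$ is exactly strong enough to guarantee at least $\sim 2s/3$ good steps, which after converting the base $(1+1/d)$ to $(1+1/(2d))$ still comfortably covers the target exponent $s/6$.
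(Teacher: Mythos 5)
Your proof is correct and follows essentially the same approach as the paper: perform a BFS from $v$ (WLOG $v\in V^+$), show that the reached set expands multiplicatively for $\mathrm{SD}(D,v)$ levels while it stays inside $V^+\cup V^0$, and then convert the many reached plus vertices into many reachable minus vertices via an out-excess count. The only differences are bookkeeping: you track $a_k=|B_k\cap V^+|$ with a good/bad-step amortization (at most $|V^0(D)|$ bad steps), whereas the paper tracks $|B_k|$ and waits $2|V^0(D)|$ initial steps before establishing a $(1+\frac{1}{2d})$ growth factor, and for the final count you use that the full forward-reachable set is closed under out-neighbours rather than the paper's explicit out-tree $T$.
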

	\begin{proof}
		Due to symmetry, we may assume without loss of generality that $v\in V^+(D)$. For simplicity, write $|V^0(D)|=\ell$ and $\mathrm{SD}(D,v)=t$. Let $A_0=\{v\}$, and for every integer $k\geq 0$, define
		\begin{align*}
			A_{k+1}=\left(N^+(A_k)\cap (V^+(D)\cup V^0(D))\right)\setminus B_k,
		\end{align*}
		where $B_k=\bigcup_{i=0}^{k}A_i$. For every integer $k\geq 1$, we now construct a set $E_k\subseteq E(D)$ as follows. By definition, for every $w\in A_{k}$, we can find an edge $w'w\in E(D)$ with $w'\in A_{k-1}$. We pick one such edge for every $w\in A_{k}$ and put it in $E_k$. Letting $T$ be the digraph induced by $\bigcup_{k\geq 1}E_k$, we can see by the construction that for every $w\in V(T)$, there exists a unique path $P$ with $\ell(P)=v$ and $r(P)=w$. In fact, $T$ is an oriented out-tree rooted at $v$.\\   
		
		\no If $t=1$, then $(1+1/2d)^{1/6}\leq (1+1/2d)\leq d$ as $d\geq 2$, so the result follows as $|\mathrm{PM}(D,v)|\geq 1$. Suppose $t\geq 2$. By the definition of $\mathrm{SD}(D,v)$, we have $|A_i|\geq 1$ for all $0\leq i\leq t-1$. This, in particular, implies that $|B_{2\ell}|\geq 2\ell+1$ as $2\ell<t$. Now, fix some $2\ell \leq i\leq t-2$; we will prove that $|B_{i+1}|\geq (1+1/2d)|B_i|$. Since $B_{i}\subseteq V^+(D)\cup V^0(D)$, we have $\sum_{v\in B_i}\mathrm{ex}(v)\geq |B_i|-\ell\geq |B_i|/2$ as $|B_i|\geq |B_{2\ell}|\geq 2\ell+1$. Hence, we can find $|B_{i}|/2$ edges $f\in E(D)$ with $\ell(f)\in B_i$ and $r(f)\notin B_i$. Therefore, since $i\leq t-2$, it is necessarily the case that $\ell(f)\in A_i$ and $r(f)\in A_{i+1}$. This implies $|A_{i+1}|\geq |B_i|/2d$ as $\Delta^0(D)\leq d$. Therefore, we obtain $|B_{i+1}|\geq (1+1/2d)|B_i|$. Inductively, we obtain
		\begin{align*}
			|B_{t-1}|\geq |B_{2\ell}|(1+1/2d)^{t-1-2\ell}\geq (2\ell+1)(1+1/2d)^{(t-1)/3}\geq \ell+(1+1/2d)^{t/6},   
		\end{align*}
		where the last inequality holds as $\ell\geq 0$ and $t\geq 2$, and the penultimate inequality holds as $\ell\leq (t-1)/3$ and $|B_{2\ell}|\geq 2\ell+1$. By the definition of $T$, note that for any edge $f\in E(D)$ with $\ell(f)\in V(T)$ and $r(f)\notin V(T)$, we have $r(f)\in V^-(D)$. Since $V(T)\subseteq V^+(D)\cup V^0(D)$ and $B_{t-1}\subseteq V(T)$, we have $\sum_{v\in V(T)}\mathrm{ex}(v)\geq |B_{t-1}|-\ell$. Then, we can find $|B_{t-1}|-\ell$ many edges $f\in E(D)\setminus E(T)$ with $\ell(f)\in V(T)$ and {$r(f)\notin V(T)$}. For every such edge, we have $r(f)\in \mathrm{PM}(D,v)$. Since $\Delta^0(D)\leq d$, we obtain 
		\begin{align*}
			|\mathrm{PM}(D,v)|\geq (|B_{t-1}|-\ell)/d\geq (1+1/2d)^{t/6}/d,
		\end{align*}
		so the result follows.
	\end{proof}

	\no Now, we are ready to prove our main result in this section.
	
	\begin{theorem}\label{thm:discrete-orientation}
		For natural numbers $d\geq 2$ and $p$ with $p\geq 1000d^2$, there exists an integer $n_0=n_0(d,p)$ such that the following holds. Let $G$ be an $(n,d,p)$-discrete graph with $n\geq n_0$. Then, every orientation $D$ of $G$ satisfying $\mathrm{ex}_D(v)\neq 0$ for all $v\in V(D)$ is consistent.
	\end{theorem}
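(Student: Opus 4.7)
The plan is to construct a partial path decomposition $\mathcal{Q}$ of $D$ that absorbs all directed cycles of $D$ of length at most $p$, and then to apply \Cref{theorem:KEY} to $D' := D - \mathcal{Q}$, combining with \Cref{prop:PARTIAL-COMPLETION} to get a perfect path decomposition of $D$. Let $\mathcal{C}_D$ denote the set of directed cycles in $D$ of length at most $p$. Since $D$ is an orientation of the $(n,d,p)$-discrete graph $G$, these correspond to the short cycles of $G$, so by \Cref{def:n-d-k-discrete} the family $\mathcal{C}_D$ is vertex-disjoint, has size at most $\log\log\log n$, and its vertices are pairwise at distance at least $\log\log n$ in $G - \mathcal{C}$.

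For each $C \in \mathcal{C}_D$ I would build a pair $\mathcal{Q}_C = \{P_1^C, P_2^C\}$ of plus-minus paths whose union contains $E(C)$. Since $|V(C)| \ge 3$ and every vertex has non-zero excess, either $V^+(D) \cap V(C)$ or $V^-(D) \cap V(C)$ has size at least $2$; I would pick two same-sign vertices $z_1, z_2 \in V(C)$ and split $C$ into arcs $A_1$ (from $z_1$ to $z_2$) and $A_2$ (from $z_2$ to $z_1$). Next choose a radius $R$ with $20d^2 + 2 < R < \min\{p/2,\, \tfrac{1}{2}\log\log n\}$; the ball of radius $R$ around $z_i$ in $G - \mathcal{C}$ is then a tree in $G$ (since $g(G-\mathcal{C}) > p$), and by \Cref{item:CYCLES-FAR-AWAY} these balls are pairwise disjoint across all vertices of $\bigcup \mathcal{C}_D$. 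Using the no-chord property of a short cycle $C$ (any chord would create a shorter cycle in $G$ sharing vertices with $C$, violating vertex-disjointness), the vertex $z_i$ has an out-edge leaving $C$ into its tree; then \Cref{prop:PM(D)-relating-SD(v)-V0(D)}, applied inside this tree where $V^0$ remains empty, produces a plus-minus path $Q_i$ tangent to $C$ at $z_i$. Concatenating $A_j$ with the appropriate $Q_i$ yields $P_j^C$, and setting $\mathcal{Q} := \bigcup_{C \in \mathcal{C}_D} \mathcal{Q}_C$ gives a partial path decomposition of $D$ because paths across distinct cycles live in pairwise disjoint trees.

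Finally I would verify that $D' := D - \mathcal{Q}$ satisfies the hypotheses of \Cref{theorem:KEY}. We have $\Delta^0(D') \le d$ trivially, and $g(D') \ge p+1 \ge 1000d^2 + 1$ since every cycle in $\mathcal{C}_D$ has all its edges absorbed by $\mathcal{Q}_C$. The main obstacle is showing that $V^0(D')$ is $k$-sparse with $k = 20d^2 + 2$. A vertex becomes excess-zero in $D'$ only if it is an endpoint of some path in $\mathcal{Q}$ whose excess equals the number of such incidences; the candidates lie either on a cycle $C$ (at $z_i^C$) or at an inner endpoint $w_i^C$ of $Q_i$, hence within $D^\diamond$-distance $R + p$ of the corresponding $C$. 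Since different short cycles are at pairwise $G-\mathcal{C}$ distance at least $\log\log n$, which for large $n$ far exceeds any possible detour through the at most $p\log\log\log n$ short-cycle edges, the clusters around different $C$'s are mutually $k$-separated in $D^\diamond$. Within one cluster, by selecting each $w_i^C$ either at a vertex of excess at least two (using the abundance of reachable minus vertices guaranteed by \Cref{prop:PM(D)-relating-SD(v)-V0(D)}) or at tree-depth greater than $k$ from $\{z_1^C, z_2^C\}$, one ensures at most two excess-zero vertices per cluster, establishing $k$-sparsity. Then \Cref{theorem:KEY} yields a perfect path decomposition of $D'$, which combines with $\mathcal{Q}$ via \Cref{prop:PARTIAL-COMPLETION} to give one for $D$.
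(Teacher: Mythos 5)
Your overall strategy mirrors the paper's: absorb every short directed cycle into a small partial path decomposition $\mathcal{Q}$ using tangent plus-minus paths, verify that the leftover digraph has large girth and a $k$-sparse set of excess-zero vertices, and finish with \Cref{theorem:KEY} and \Cref{prop:PARTIAL-COMPLETION}. You differ by absorbing each short cycle $C$ \emph{completely} with two paths $P_1^C,P_2^C$, whereas the paper uses a single path $P_i$ per cycle that only eats one arc of $C_i$ and lets \Cref{theorem:KEY} cope with the residual arcs (which is fine since an arc alone is not a cycle). Both variants are workable; yours removes all short-cycle edges from $D'$ at the cost of doubling the absorbed paths and the excess-zero candidates.

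The genuine gap is in the construction of the tangent paths $Q_i$ and the subsequent $k$-sparsity check. You assert that \Cref{prop:PM(D)-relating-SD(v)-V0(D)} ``applied inside this tree, where $V^0$ remains empty, produces a plus-minus path $Q_i$ tangent to $C$ at $z_i$,'' but that proposition only lower-bounds $|\mathrm{PM}(D,v)|$; it does not confine any plus-minus path to the ball $B_R(z_i)$. When $\mathrm{SD}(D_0,z_i)>R$, the path necessarily leaves the ball, and then nothing in your argument prevents the far endpoint $w_i^C$ from landing close (in $G-\mathcal{C}$) to a $z_j$ or $w_j^{C'}$ of a different cycle, nor guarantees the different $Q_i$'s remain edge-disjoint once they escape their disjoint trees. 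Your $k$-sparsity clause — ``selecting each $w_i^C$ either at a vertex of excess at least two \ldots or at tree-depth greater than $k$'' — is not justified: \Cref{prop:PM(D)-relating-SD(v)-V0(D)} says nothing about the excess of reachable endpoints, and depth $>k$ from $\{z_1^C,z_2^C\}$ does not imply distance $>k$ from the other cycles' clusters. The paper resolves precisely this difficulty in Claim~\ref{claim:incident-partial-paths} via a two-stage extremal argument: first take a maximal family of short ($\leq\theta$) tangent paths (whose endpoints are automatically far apart by \Cref{def:n-d-k-discrete}\ref{item:CYCLES-FAR-AWAY}), then, for the remaining cycles, use the now-large value of $\mathrm{SD}$ together with \Cref{prop:PM(D)-relating-SD(v)-V0(D)} to obtain $\log\log n$ candidate endpoints — enough to dodge the $O\bigl((p+2)\log\log\log n\cdot d^{k+1}\bigr)$ forbidden vertices — and works in $D_0-\mathcal{Q}'$ to preserve edge-disjointness. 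Without an argument of this kind your proof does not go through.
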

	\begin{proof}
		Fix natural numbers $d\geq 2$ and $p\geq 1000d^2$. Let $k:=20d^2+2$, and $n$ be sufficiently large. Let $G$ be an $(n,d,p)$-discrete graph, and $D$ be an orientation of $G$ satisfying $\mathrm{ex}_D(v)\neq 0$ for all $v\in V(D)$. Let $\mathcal{C}_0$ be the collection of all cycles in $G$ of length at most $p$, and write $G_0:=G-\mathcal{C}_0$. Similarly, let $\mathcal{C}=\{C_i:i\in [t]\}$ be the collection of all directed cycles in $D$ of length at most $p$, and write $D_0:=D-\mathcal{C}$. Using \Cref{prop:PARTIAL-COMPLETION}, it suffices to find a partial path decomposition $\mathcal{P}$ for $D$ in which $D-\mathcal{P}$ is consistent. It is clear that $\Delta^0(D-\mathcal{P})\leq d$ and $V^0(D-\mathcal{P})\subseteq \bigcup_{P\in \mathcal{P}}\{\ell(P),r(P)\}$ for any partial path decomposition $\mathcal{P}$ for $D$. Then, by \Cref{theorem:KEY}, it is enough to find a partial path decomposition $\mathcal{P}$ for $D$ such that $g(D-\mathcal{P})\geq 1000d^2$ and that $\bigcup_{P\in \mathcal{P}}\{\ell(P),r(P)\}$ is $k$-sparse in $D - \mathcal{P}$. We will ensure the condition $g(D-\mathcal{P})\geq 1000d^2$ by choosing $\mathcal{P}$ in such a way that at least one edge from each $C_i\in \mathcal{C}$ is used in $\mathcal{P}$. As a first step, we prove that $\mathcal{C}$ satisfies certain properties as a consequence of $G$ being $(n,d,p)$-discrete.
		
		\begin{claim}\label{claim:D-is-n-d-p-discrete}
			The following hold:
			\begin{enumerate}[label={\rm (\roman*$'$)}]
				\item $t\leq \log\log\log n$; \label{item:DIRECTED-N-SHORT-CYCLES}
				\item for any distinct $C,C'\in \mathcal{C}$, $V(C)\cap V(C')=\emptyset$; \label{item:DIRECTED-CYCLES-DISJOINT}
				\item $\mathrm{dist}_{D_0^\diamond}(u,v)\geq \log \log n$ for all distinct $u,v\in V(\mathcal{C})$.\label{item:DIRECTED-CYCLES-FAR-AWAY}
			\end{enumerate}
		\end{claim} 
		\begin{proofclaim}
			Recall \Cref{def:n-d-k-discrete}. Since every cycle in $\mathcal{C}$ is an orientation of a cycle in $\mathcal{C}_0$, \ref{item:N-SHORT-CYCLES} and \ref{item:CYCLES-DISJOINT} immediately give \ref{item:DIRECTED-N-SHORT-CYCLES} and \ref{item:DIRECTED-CYCLES-DISJOINT}, respectively. Then, assume for a contradiction that, for some distinct $u,v\in V(\mathcal{C})$, there exists a path $P$ in $D_0^\diamond$ between $u$ and $v$ such that $|E(P)|<\log\log n$. By \ref{item:CYCLES-FAR-AWAY}, we have $\mathrm{dist}_{G_0}(u,v)\geq \log \log n$, which implies $P$ has an edge $xy\in E(\mathcal{C}_0)$ with $xy,yx\notin E(\mathcal{C})$. {Let $x_0y_0$ be} the first such edge {(i.e.~satisfying $x_0y_0\in E(\mathcal{C}_0)$ and $x_0y_0,y_0x_0\notin E(\mathcal{C})$)} encountered when traveling from $u$ to $v$ along $P$, with  {$x_0$} being closest to $u$. By \ref{item:CYCLES-DISJOINT}, we have {$x_0\in V(\mathcal{C}_0)\setminus V(\mathcal{C})$}, which in particular implies that  {$x_0\neq u$}. Then, we see that {$\mathrm{dist}_{G_0}(u,x_0)< \log \log n$ by the choice of $x_0y_0$}, which contradicts \ref{item:CYCLES-FAR-AWAY}. Hence, \ref{item:DIRECTED-CYCLES-FAR-AWAY} follows. 
		\end{proofclaim}
		
		\no Next, note that $\mathrm{ex}_{D_0}(v)=\mathrm{ex}_D(v)\neq 0$ for all $v\in V(D)$. Then, for every $i\in [t]$, we have either $|V(C_i)\cap V^+(D)|\geq 2$ or $|V(C_i)\cap V^-(D)|\geq 2$. Therefore, we can choose distinct $a_i,b_i\in V(C_i)$ for all $i\in [t]$ such that either $\{a_i,b_i\}\subseteq V^+(D)$ or $\{a_i,b_i\}\subseteq V^-(D)$. Recall the definition of distinctive path family just before \Cref{prop:PM(D)-relating-SD(v)-V0(D)}. 
		
		\begin{claim}\label{claim:incident-partial-paths}
			There exists a distinctive path family $\mathcal{Q}$ in $D_0$ such that
			\begin{enumerate}[label={\rm (\arabic*)}]
				\item there exists a bijection $f:[t]\to\mathcal{Q}$ such that $\{\ell(f(i)),r(f(i))\}\cap V(\mathcal{C})=\{a_i\}$ for all $i\in [t]$, \label{item:Q-i-incident-a-i}
				\item $\mathrm{dist}_{D_0^\diamond}(u,v)>k$ for all distinct $u,v\in \left(\bigcup_{Q\in \mathcal{Q}} \{\ell(Q),r(Q)\}\right)\cup V(\mathcal{C})$ unless {possibly} $\{u,v\}=\{\ell(Q),r(Q)\}$ for some $Q\in \mathcal{Q}$. \label{item:endpoints-far-away}
			\end{enumerate}
		\end{claim}
		\begin{proofclaim}
			We begin with a brief explanation of our proof strategy. Note that if for all $i\in [t]$, we manage to find a ``short" plus-minus path $Q_i$ that is incident to $a_i$, then Claim~\ref{claim:D-is-n-d-p-discrete} implies \ref{item:endpoints-far-away}. We start the proof by taking the largest collection of such ``short" plus-minus paths $Q_i$. If we fail to find a path for some $a_i$, we can conclude that $a_i$ has large distance to sign change, which means there exist a ``large" number of plus-minus paths incident to $a_i$ using \Cref{prop:PM(D)-relating-SD(v)-V0(D)}. Hence, we have a lot of choice for the other endpoint of $Q_i$. In this way, \ref{item:endpoints-far-away} can be satisfied as there are only a ``small" number of vertices which are close to the endpoints of the paths previously chosen (as $\Delta(G) \leq d$).\\
			
			\no Set $\theta:=6(\log\log\log n+\log d)/\left(\log(2d+1)-\log 2d\right)$ so that 
			\begin{align}
				\dfrac{1}{d}\left(1+\dfrac{1}{2d}\right)^{\theta/6}=\log \log n. \label{eqn:choice-of-theta}
			\end{align}
			Since $n$ is sufficiently large, we have that 
			\begin{align}
				6\log\log\log n<\theta<(\log\log n)/3.\label{eqn:inequality-theta}
			\end{align}
			
			\no Consider the largest subset $S\subseteq [t]$ that admits a distinctive path family $\mathcal{Q}$ in $D_0$ satisfying \ref{item:Q-i-incident-a-i} and \ref{item:endpoints-far-away} (where $S$ plays the role of $[t]$) such that $|E(Q)|\leq \theta$ for all $Q\in \mathcal{Q}$. If $S=[t]$, we are done, so assume $[t]\setminus S$ is nonempty. Let $D'_0:=D_0-\mathcal{Q}$ and $F_0:=\bigcup_{Q\in \mathcal{Q}} \{\ell(Q),r(Q)\}$. For any $i\in [t]\setminus S$, we have $a_i\notin F_0$ using \ref{item:Q-i-incident-a-i}. Also, it is clear that $V^0(D'_0)\subseteq F_0$, so $\mathrm{SD}(D'_0,a_i)$ is well-defined for all $i\in [t]\setminus S$. Suppose for a contradiction that $\mathrm{SD}(D'_0,a_i)\leq \theta$ for some $i\in [t]\setminus S$. Then, we can choose a plus-minus path $Q_i$ in $D'_0$ of length $\mathrm{SD}(D'_0,a_i)$ that is incident to $a_i$. Let $x_i$ be the other endpoint of $Q_i$. Since $|E(Q)|\leq \theta$ for all $Q\in \mathcal{Q}$ and $|E(Q_i)|\leq \theta$, by \ref{item:DIRECTED-CYCLES-FAR-AWAY} and \eqref{eqn:inequality-theta}, we see that for all $x\in (F_0\cup V(\mathcal{C}))\setminus\{a_i\}$,	 
			\begin{align*}
				\mathrm{dist}_{D_0^\diamond}(x_i,x)\geq \log\log n-2\theta>(\log\log n)/3>k.
			\end{align*}
			In particular, this gives {$x_i\notin F_0\cup V(\mathcal{C})$}. By \ref{item:DIRECTED-CYCLES-DISJOINT}, we have $\mathcal{Q}\cup \{Q_i\}$ is a distinctive path family in $D_0$ satisfying \ref{item:Q-i-incident-a-i} and \ref{item:endpoints-far-away} (where $S\cup \{i\}$ plays the role of $[t]$) such that $|E(Q)|\leq \theta$ for all $Q\in \mathcal{Q}\cup \{Q_i\}$, which contradicts $S$ being the largest such subset of $[t]$. As a result, 
			\begin{align}
				\mathrm{SD}(D'_0,a_i)>\theta\text{ holds for all }i\in [t]\setminus S. \label{eqn:SD-theta-a-i}
			\end{align}

			\no Then, consider the largest subset $T\subseteq [t]$ that admits a distinctive path family $\mathcal{Q}'$ satisfying \ref{item:Q-i-incident-a-i} and \ref{item:endpoints-far-away} (where $T$ plays the role of $[t]$) such that $\mathcal{Q}\subseteq \mathcal{Q}'$. We will complete the proof of the claim by showing that $T=[t]$. Assume for a contradiction that there exists $i\in [t]\setminus T$. Let $D'_1:=D_0-\mathcal{Q}'$ and $F_1:=\bigcup_{Q\in \mathcal{Q}'} \{\ell(Q),r(Q)\}$. We have $a_i\notin F_1$ using \ref{item:Q-i-incident-a-i}. Also, it is clear that $V^0(D'_1)\subseteq F_1$, so $\mathrm{SD}(D'_1,a_i)$ is well-defined. Moreover, we have $\mathrm{SD}(D'_0,a_i)\leq \mathrm{SD}(D'_1,a_i)$, which, in particular, implies that $\mathrm{SD}(D'_1,a_i)> \theta$ using \eqref{eqn:SD-theta-a-i}. On the other hand, by \eqref{eqn:inequality-theta} and \ref{item:DIRECTED-N-SHORT-CYCLES}, we have
			\begin{align}
				|V^0(D'_1)|\leq |F_1|\leq 2t\leq 2\log\log\log n<\theta/3<\mathrm{SD}(D'_1,a_i)/3.\label{eqn:relating-theta-SD}
			\end{align}
			Hence, by using \eqref{eqn:relating-theta-SD} to apply \Cref{prop:PM(D)-relating-SD(v)-V0(D)}, we have $|\mathrm{PM}(D'_1,a_i)|\geq \log\log n$ using \eqref{eqn:choice-of-theta}. Moreover, by \ref{item:DIRECTED-N-SHORT-CYCLES} and \eqref{eqn:relating-theta-SD}, we obtain $|F_1\cup V(\mathcal{C})|\leq(p+2)\log\log\log n$. Note that, using $\Delta(G)\leq d$, {for any $x\in V(D_0)$ and $j\in[k]$, the number of vertices $w\in V(D_0)$ satisfying $\mathrm{dist}_{D_0^{\diamond}}(w,x)=j$ is at most $d(d-1)^{j-1}$. As $d(d-1)^{j-1}\leq d(d-1)^{k-1}$ for any $j\in [k]$,} the number of vertices $w\in V(D_0)$ satisfying $\mathrm{dist}_{D_0^{\diamond}}(w,x)\leq k$ for some $x\in F_1\cup V(\mathcal{C})$ is at most
			\begin{align*}
				(1+ {kd(d-1)^{k-1}})(p+2)\log\log\log n<\log \log n\leq |\mathrm{PM}(D'_1,a_i)|,
			\end{align*}
			as $n$ is sufficiently large. Hence, we can find $x_i\in \mathrm{PM}(D'_1,a_i)$ and a plus-minus path $Q_i$ in $D'_1$ with $\{\ell(Q_i),r(Q_i)\}=\{a_i,x_i\}$ such that $\mathrm{dist}_{D_0^{\diamond}}(x_i,x)> k$ for all $x\in F_1\cup V(\mathcal{C})$. This, in particular, implies that {$x_i\notin F_1\cup V(\mathcal{C})$}, so we have $\mathcal{Q}'\cup \{Q_i\}$ is a distinctive path family in $D_0$ satisfying \ref{item:Q-i-incident-a-i} and \ref{item:endpoints-far-away} (where $T\cup \{i\}$ playing the role of $[t]$), which contradicts $T$ being the largest such subset of $[t]$. We conclude that $T=[t]$, so the result follows.
		\end{proofclaim}

		\no By Claim~\ref{claim:incident-partial-paths}, let $\mathcal{Q}:=\{Q_i:i\in [t]\}$ be a distinctive path family in $D_0$ satisfying \ref{item:Q-i-incident-a-i} (with $f(i)=Q_i$ for $i\in [t]$) and \ref{item:endpoints-far-away}. For $i\in [t]$, let $x_i$ be the other endpoint of $Q_i$. By \ref{item:Q-i-incident-a-i}, note that $x_i\notin V(\mathcal{C})$. Recall that for every $i\in [t]$, either we have $a_i,b_i\in V^+(D)$ or $a_i,b_i\in V^-(D)$. Now, for each $i\in [t]$, we will modify $Q_i$ to obtain another plus-minus path $P_i$ in $D_0$.  If $a_i\in V^+(D)$, we take the last vertex $a'_i$ along $Q_i$ such that $a'_i\in V(C_i)$. If $a'_i\neq b_i$, we define $P_i=b_iC_ia'_iQ_ix_i$, otherwise we define $P_i=a_iC_ib_iQ_ix_i$. Similarly, if $a_i\in V^-(D)$, we take the first vertex $a'_i$ along $Q_i$ such that $a'_i\in V(C_i)$. If $a'_i\neq b_i$, we define $P_i=x_iQ_ia'_iC_ib_i$, otherwise we define $P_i=x_iQ_ib_iC_ia_i$. It is easy to check that for each $i\in [t]$, we have that $E(P_i)\cap E(C_i)\neq \emptyset$ and that $\{\ell(P_i),r(P_i)\}=\{x_i,y_i\}$ where $y_i\in \{a_i,b_i\}$. Hence, $\mathcal{P}:=\{P_i:i\in[t]\}$ is a distinctive path family in $D_0$ since $\{x_i,a_i,b_i:i\in [t]\}$ are distinct using that $\mathcal{Q}$ is a distinctive path family, $x_i\notin V(\mathcal{C})$, and~\ref{item:DIRECTED-CYCLES-DISJOINT}. Since $\mathrm{ex}_{D_0}(v)=\mathrm{ex}_D(v)\neq 0$ for all $v\in V(D)$, we can conclude that $\mathcal{P}$ is a partial path decomposition for $D$.\\
		
		\no Let $F=D-{\mathcal{P}}$. By \Cref{prop:PARTIAL-COMPLETION}, it suffices to prove that $F$ is consistent. Note that $E(P_i)\cap E(C_i)\neq \emptyset$ for all $i\in [t]$, so we have $g(F)>p\geq 1000d^2$. Also, it is clear that $\Delta^0(F)\leq d$. Hence, by \Cref{theorem:KEY}, it is enough to prove that $V^0(F)$ is $k$-sparse in $F$ as $k=20d^2+2$. Note that $E(F)\setminus E(D_0)\subseteq E(D)\setminus E(D_0)= E(\mathcal{C})$ and that $V^0(F)\subseteq \bigcup_{i\in [t]}\{x_i,y_i\}$ where $y_i\in \{a_i,b_i\}$ for all $i\in [t]$. Consider a path $P$ in $F^\diamond$ between some distinct $u,v\in V^0(F)$ with $|E(P)|\leq k$; we will prove that {$\{u,v\}=\{x_i,y_i\}$} for some $i\in [t]$ (which shows $V^0(F)$ is $k$-sparse in $F$). {Note that  $a_i\in V^0(F)$ (resp.~$b_i\in V^0(F)$) implies that $y_i=a_i$ (resp.~$y_i=b_i$) as we have $\mathrm{ex}_F(v) \not = 0$ for all $v\notin \bigcup_{j\in [t]}\{\ell(P_j),r(P_j)\}$. Moreover, $\{u,v\}=\{a_i,b_i\}$ is not possible as at most one of $a_i$ and $b_i$ belongs to $V^0(F)$. Hence, it suffices to prove that $u,v\in \{a_i,b_i,x_i\}$.} If $E(P)\cap E(\mathcal{C}^\diamond)=\emptyset$, then $\mathrm{dist}_{D_0^\diamond}(u,v)\leq k$. Hence, we have $\{u,v\}=\{a_i,x_i\}$ for some $i\in [t]$ using \ref{item:endpoints-far-away}, so we are done. Suppose $E(P)\cap E(\mathcal{C}^\diamond)\neq \emptyset$. {Let $z_1z'_1\in E(P)\cap E(\mathcal{C}^\diamond)$ be the first such edge encountered when traveling from $u$ to $v$ along $P$, with $z_1$ being the closest vertex to $u$ (possibly $z_1=u$ and $z'_1=v$). By~\ref{item:DIRECTED-CYCLES-DISJOINT}, there exists a unique $i\in [t]$ with $z_1,z'_1\in V(C_i)$. Note that $uPz_1\subseteq D_0^\diamond$ and $|E(uPz_1)|\leq k$. Then, by \ref{item:endpoints-far-away}, we have either $z_1=u$ or $\{z_1,u\}=\{a_{i'},x_{i'}\}$ for some $i'\in [t]$. Using $z_1\in V(C_i)$ and $u\in V^0(F)\subseteq \bigcup_{j\in [t]}\{a_j,b_j,x_j\}$, we can conclude that either $u=z_1\in \{a_i,b_i\}$ or  $i'=i$, $u=x_i$, $z_1=a_i$. This, in particular, implies that $u\in \{x_i,a_i,b_i\}$. Let $z_2\in V(P)$ be the closest vertex to $v$ along $z_1Pv$ satisfying $E(z_1Pz_2)\subseteq E(C_i^\diamond)$ (possibly $z_2=z'_1$ and/or $z_2=v$). It is clear that $z_2\in V(C_i)$. Moreover, by~\ref{item:DIRECTED-CYCLES-DISJOINT}, we have $z_2\notin V(C_j)$ for all $j\in [t]\setminus\{i\}$. Hence, if $v=z_2$, we obtain $v\in \{a_i,b_i\}$ since $v\in V^0(F)\subseteq \bigcup_{j\in [t]}\{a_j,b_j,x_j\}$, so we are done. Suppose that $v\neq z_2$.}\\
		
		\no Next, assume for a contradiction that $E(z_2Pv)\cap E(\mathcal{C}^\diamond)\neq \emptyset$. Let $z_3z'_3\in E(P)\cap E(\mathcal{C}^\diamond)$ be the first such edge encountered when traveling from $z_2$ to $v$ along $P$, with $z_3$ being closest vertex to $z_2$ (a priori possibly $z_3=z_2$ and $z'_3=v$). Note that $z_2Pz_3\subseteq D_0^\diamond$ and $|E(z_2Pz_3)|\leq k$.  Then, by \ref{item:endpoints-far-away}, we have either $z_2=z_3$ or $\{z_2,z_3\}=\{a_{i'},x_{i'}\}$ for some $i'\in [t]$. Using $z_2,z_3\in V(\mathcal{C})$ and $x_{i'}\notin V(\mathcal{C})$, the latter case is impossible, so we find $z_2=z_3$. However, using $z_2\notin V(C_j)$ for all $j\in [t]\setminus\{i\}$, this implies that $z_3z'_3\in E(C_i)$, which is a contradiction due to the definition of $z_2$. As a result, we have $E(z_2Pv)\cap E(\mathcal{C}^\diamond)=\emptyset$. Then, by \ref{item:endpoints-far-away}, we have $\{z_2,v\}=\{a_{i'},x_{i'}\}$ for some $i'\in [t]$. Again, using $v\in V^0(F)\subseteq \bigcup_{j\in [t]}\{a_j,b_j,x_j\}$ and $z_2\notin V(C_j)$ for all $j\in [t]\setminus\{i\}$, we obtain $i'=i$, $z_2=a_i$, $v=x_i$, so we are done. 
	\end{proof}
	
	\no Now, the proof of \Cref{thm:RANDOM-REGULAR} is immediate.
	
	\begin{proof}[Proof of \Cref{thm:RANDOM-REGULAR}]
		If $d=1$, then any orientation of $\mathcal{G}_{n,d}$ is just a matching, so the result trivially follows. Let $d\geq 3$ be an odd integer. By \Cref{prop:RANDOM-REGULAR}, $\mathcal{G}_{n,d}$ is $(n,d,1000d^2)$-discrete with high probability. Moreover, any orientation $D$ of $\mathcal{G}_{n,d}$ satisfies $\mathrm{ex}_D(v)=|d^+(v)-d^-(v)|\neq 0$ for all $v\in V(D)$ as $d^+(v)-d^-(v)$ has the same parity as $d$. Hence, the result follows by \Cref{thm:discrete-orientation}.    
	\end{proof}

	\section{Conclusion}\label{sec:conclusion}
	In this section, we point out possible directions for further research. Recall that  \Cref{theorem:LARGE-GIRTH} does not require the graph to be regular and so \Cref{corollary:main} holds not just for regular graphs but also for graphs $G$ with all degrees odd. One might wonder if Pullman's Conjecture (Conjecture~\ref{conjecture:Regular})  could also be true for graphs with all degrees odd. However, an example is given in \cite{CUBIC} of a graph with degrees $1$, $3$, or $5$ which has an orientation $D$ with $\mathrm{pn}(D)=4$ and $\mathrm{ex}(D)=3$. Here, we generalize that example by showing that the difference $\mathrm{pn}(D)-\mathrm{ex}(D)$ can be made arbitrarily large even when $\mathrm{ex}(v) \not= 0$ for all $v \in V(D)$.	
	
	\begin{proposition}\label{prop:large-degree}
		For each integer $k\geq 0$, there exists an oriented graph $G_k$ on $20k+10$ vertices with $\Delta(G_k^\diamond)=2k+5$ such that $d_{G_k^\diamond}(v)$ is odd for all $v\in V(G_k)$ and $\mathrm{pn}(G_k)-\mathrm{ex}(G_k)\geq 2k+2$. In particular, $G_k$ is not consistent.  
	\end{proposition}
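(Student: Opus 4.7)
The plan is to proceed by explicit construction. I would describe the graph $G_k$ directly and then verify, in turn, the vertex count, the maximum degree, the odd-degree condition, and the value of $\mathrm{ex}(G_k)$, all of which should follow by direct inspection and summation; the lower bound on $\mathrm{pn}(G_k)$ is the only genuinely non-trivial part. The matching upper bound $\mathrm{pn}(G_k)\leq \mathrm{ex}(G_k)+2k+2$ is usually immediate from the construction itself, since one typically exhibits a path decomposition realising exactly that many paths.

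A natural way to build $G_k$ is as a combination of roughly $2k+2$ small ``obstruction gadgets'' $H_1,\ldots,H_{2k+2}$, each contributing $1$ to the gap $\mathrm{pn}-\mathrm{ex}$, glued together through a bounded number of common ``interface'' vertices whose total degree accumulates to $2k+5$. Each gadget would be modelled on the example from \cite{CUBIC} that gives gap $1$: a short oriented cycle decorated with pendant leaves of alternating orientation, so that every gadget vertex has nonzero excess while the cycle itself is Eulerian and attached to the rest of $G_k$ only through a couple of edges at a designated interface vertex. Together the $2k+2$ copies and the shared interface should account for the target of $20k+10$ vertices.

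For the lower bound $\mathrm{pn}(G_k)\geq \mathrm{ex}(G_k)+2k+2$, the strategy is a restriction-and-counting argument. Given any path decomposition $\mathcal{P}$ of $G_k$, restrict each $P\in \mathcal{P}$ to each gadget $H_i$, obtaining a collection of sub-paths that forms a path decomposition of $H_i$. Because the cycle inside $H_i$ is Eulerian and $H_i$ meets the remainder of $G_k$ only through the interface, at least one path of $\mathcal{P}$ must start or end at a vertex of $H_i$ beyond what $\mathrm{ex}_{H_i}$ already forces, contributing $+1$ to $|\mathcal{P}|-\mathrm{ex}(G_k)$. Summing over the $2k+2$ gadgets would then give the claimed bound, and the analogue of Observation~\ref{prop:basic} applied gadget-by-gadget converts these local surpluses into a global lower bound on $|\mathcal{P}|$. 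The hard part will be arranging the interface so that a single global path cannot simultaneously absorb cycles from two different gadgets and thereby collapse two unit contributions into one; I would enforce this by ensuring that every absorbing path has an endpoint strictly inside its own gadget (for instance at a pendant leaf), a locality condition that should follow from the chosen orientation at the interface together with the bounded number of interface edges per gadget.
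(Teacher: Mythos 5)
Your proposal has the same high-level blueprint as the paper --- build the obstruction from small gadgets, then bound $\mathrm{pn}$ from below by restricting an arbitrary path decomposition to the gadgets --- but the details diverge in ways that matter, and the crux of the lower-bound argument is left unresolved in a way the paper resolves cleanly.

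\textbf{Construction.} The paper does not glue $2k+2$ gap-one gadgets through a shared interface. It takes $4k+2$ \emph{vertex-disjoint} copies of a fixed $5$-vertex oriented graph $D_0$ with $\mathrm{ex}(D_0)=2$ and $\mathrm{pn}(D_0)=4$ (so gap $2$, not gap $1$ as in the \cite{CUBIC} example you had in mind), and then adds $4k+1$ extra edges, all incident to the excess-zero vertex of each copy, with two central copies accumulating degree $2k+5$. This gives $5(4k+2)=20k+10$ vertices exactly. Your proposed arithmetic --- $2k+2$ gadgets plus a bounded shared interface totalling $20k+10$ vertices --- doesn't close: with constant-size gadgets and a constant-size interface you cannot hit a linear target of $20k+10$ with only $2k+2$ pieces unless each gadget has exactly $10$ private vertices, and then the totals overshoot. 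Also note that $D_0$ itself has a vertex of even degree $4$; it is precisely the extra edges that turn every degree odd in $G_k$, so demanding odd degrees inside each gadget (as your ``pendant leaves'' description does) is an unnecessary constraint.

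\textbf{Lower bound.} You correctly identify the hard part, but the ``locality'' you propose to enforce --- that no global path can absorb obstructions from two gadgets at once --- is neither easy to arrange nor what the paper does; in the paper's $G_k$, paths very much can pass through several copies of $D_0$ via the extra edges. The paper's argument sidesteps locality entirely by a global count: restrict any path decomposition $\mathcal{P}$ of $G_k$ to the copies of $D_0$. Each copy receives a path decomposition with at least $\mathrm{pn}(D_0)=4$ sub-paths, so there are at least $4(4k+2)$ sub-paths in total. On the other hand, a path $P\in\mathcal{P}$ that uses $j$ of the $4k+1$ extra edges breaks into at most $j+1$ sub-paths when those edges are deleted, so the total number of sub-paths is at most $|\mathcal{P}|+(4k+1)$. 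Comparing gives $|\mathcal{P}|\ge 4(4k+2)-(4k+1)=12k+7$, and since $\mathrm{ex}(G_k)=10k+5$, the gap is $\ge 2k+2$. No per-gadget ``surplus endpoint'' argument and no use of Observation~\ref{prop:basic} is required, and nothing about where the absorbing paths end needs to be controlled. Your approach could in principle be pushed through, but you would have to resolve exactly the interaction you flag as the hard part, whereas the paper's counting makes the interaction harmless by design.
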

	\begin{proof}
		First, consider the oriented graph $D_0$ with $\mathrm{ex}(D_0)=2$, depicted in Figure~\ref{fig:D0}.
		
		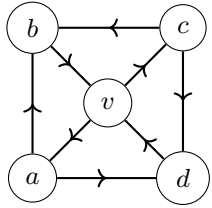
\begin{figure}[h]
			\centering
			
			\begin{tikzpicture}[scale=2]
				
				\node [style=whitecircle,scale=1.1] (1) at (0, 0) {\footnotesize$a$};
				\node [style=whitecircle,scale=1.1] (2) at (0, 1) {\footnotesize$b$};
				\node [style=whitecircle,scale=1.1] (3) at (1, 0) {\footnotesize$d$};
				\node [style=whitecircle,scale=1.1] (4) at (1, 1) {\footnotesize$c$};
				\node [style=whitecircle,scale=1.1] (a) at (0.5, 0.5) {\footnotesize$v$};

				\begin{scope}[thick,decoration={
						markings,
						mark=at position 0.5 with {\arrow{>}}}
					]
					\draw[postaction={decorate}] (1)--(2);
					\draw[postaction={decorate}] (4)--(2);
					\draw[postaction={decorate}] (1)--(3);
					\draw[postaction={decorate}] (4)--(3);
					\draw[postaction={decorate}] (a)--(1);
					\draw[postaction={decorate}] (a)--(4);
					\draw[postaction={decorate}] (2) to (a);
					\draw[postaction={decorate}] (3) to (a);
				\end{scope}

			\end{tikzpicture}
			\caption{The oriented graph $D_0$ on five vertices with $\mathrm{ex}(D_0)=2$ and $\mathrm{pn}(D_0)=4$.}
			\label{fig:D0}
		\end{figure}

		\no It can be easily checked that $\mathrm{pn}(D_0)=4$. Consider the oriented graph $G_k$ which consists of $4k+2$ copies of $D_0$ with additional 4k+1 edges, depicted in Figure~\ref{fig:large-degree-inconsistent}, where the unique excess-zero vertices in each copy of $D_0$ are shown as black circles. 
		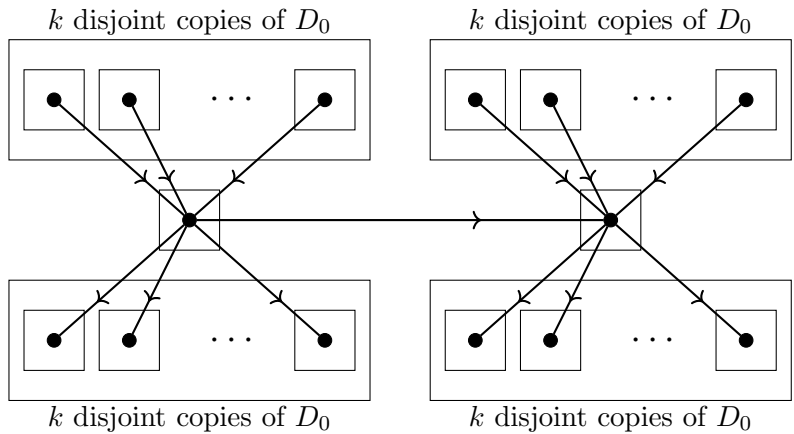
\begin{figure}[h]
			\centering
			
			\begin{tikzpicture}[scale=0.8]

				\node at (3.75,1) {\Large $\cdots$};
				\node at (10.75,1) {\Large $\cdots$};
				
				\node at (3,2.3) {$k$ disjoint copies of $D_0$};
				\draw  (0,0) -- (6,0) -- (6,2) -- (0,2) -- cycle ;

				\node at (10,2.3) {$k$ disjoint copies of $D_0$};
				\draw  (7,0) -- (13,0) -- (13,2) -- (7,2) -- cycle ;

				\draw  (0.25,1.5) -- (1.25,1.5) -- (1.25,0.5) -- (0.25,0.5) -- cycle ;
				\draw  (1.5,1.5) -- (2.5,1.5) -- (2.5,0.5) -- (1.5,0.5) -- cycle ;
				\draw  (4.75,1.5) -- (5.75,1.5) -- (5.75,0.5) -- (4.75,0.5) -- cycle ;
				
				\node [style=blackcircle,scale=0.5] (1) at (0.75, 1) {};
				\node [style=blackcircle,scale=0.5] (2) at (2,1) {};
				\node [style=blackcircle,scale=0.5] (3) at (5.25, 1) {};
				
				\draw  (7.25,1.5) -- (8.25,1.5) -- (8.25,0.5) -- (7.25,0.5) -- cycle ;
				\draw  (8.5,1.5) -- (9.5,1.5) -- (9.5,0.5) -- (8.5,0.5) -- cycle ;
				\draw  (11.75,1.5) -- (12.75,1.5) -- (12.75,0.5) -- (11.75,0.5) -- cycle ;
				
				\node [style=blackcircle,scale=0.5] (4) at (7.75, 1) {};
				\node [style=blackcircle,scale=0.5] (5) at (9,1) {};
				\node [style=blackcircle,scale=0.5] (6) at (12.25, 1) {};

				\node at (3.75,-3) {\Large $\cdots$};
				\node at (10.75,-3) {\Large $\cdots$};
				
				\node at (3,-4.3) {$k$ disjoint copies of $D_0$};
				\draw  (0,-4) -- (6,-4) -- (6,-2) -- (0,-2) -- cycle ;

				\node at (10,-4.3) {$k$ disjoint copies of $D_0$};
				\draw  (7,-4) -- (13,-4) -- (13,-2) -- (7,-2) -- cycle ;

				\draw  (0.25,-2.5) -- (1.25,-2.5) -- (1.25,-3.5) -- (0.25,-3.5) -- cycle ;
				\draw  (1.5,-2.5) -- (2.5,-2.5) -- (2.5,-3.5) -- (1.5,-3.5) -- cycle ;
				\draw  (4.75,-2.5) -- (5.75,-2.5) -- (5.75,-3.5) -- (4.75,-3.5) -- cycle ;
				
				\node [style=blackcircle,scale=0.5] (11) at (0.75, -3) {};
				\node [style=blackcircle,scale=0.5] (12) at (2,-3) {};
				\node [style=blackcircle,scale=0.5] (13) at (5.25, -3) {};
				
				\draw  (7.25,-2.5) -- (8.25,-2.5) -- (8.25,-3.5) -- (7.25,-3.5) -- cycle ;
				\draw  (8.5,-2.5) -- (9.5,-2.5) -- (9.5,-3.5) -- (8.5,-3.5) -- cycle ;
				\draw  (11.75,-2.5) -- (12.75,-2.5) -- (12.75,-3.5) -- (11.75,-3.5) -- cycle ;
				
				\node [style=blackcircle,scale=0.5] (14) at (7.75, -3) {};
				\node [style=blackcircle,scale=0.5] (15) at (9,-3) {};
				\node [style=blackcircle,scale=0.5] (16) at (12.25, -3) {};

				\draw  (2.5,-0.5) -- (3.5,-0.5) -- (3.5,-1.5) -- (2.5,-1.5) -- cycle ;
				
				\node [style=blackcircle,scale=0.5] (x) at (3, -1) {};
				
				\draw  (9.5,-0.5) -- (10.5,-0.5) -- (10.5,-1.5) -- (9.5,-1.5) -- cycle ;
				
				\node [style=blackcircle,scale=0.5] (y) at (10, -1) {};

				\begin{scope}[thick,decoration={
						markings,
						mark=at position 0.7 with {\arrow{>}}}
					]
					
					\draw[postaction={decorate}] (1)--(x);
					\draw[postaction={decorate}] (2)--(x);
					\draw[postaction={decorate}] (3)--(x);
					\draw[postaction={decorate}] (x)--(11);
					\draw[postaction={decorate}] (x)--(12);
					\draw[postaction={decorate}] (x)--(13);

					\draw[postaction={decorate}] (4)--(y);
					\draw[postaction={decorate}] (5)--(y);
					\draw[postaction={decorate}] (6)--(y);
					\draw[postaction={decorate}] (y)--(14);
					\draw[postaction={decorate}] (y)--(15);
					\draw[postaction={decorate}] (y)--(16);
					
					\draw[postaction={decorate}] (x)--(y);
				\end{scope}

				
			\end{tikzpicture}
			\caption{The (inconsistent) oriented graph $G_k$ on $20k+10$ vertices with $\mathrm{ex}(v)=1$ for all $v\in V(G_k)$.}
			\label{fig:large-degree-inconsistent}
		\end{figure}
		Note that vertices in the middle copies have degree $2k+5$ in $G_k^\diamond$. Moreover, it is clear from the picture that all the vertices in $G_k$ {have} excess one, so $\mathrm{ex}(G_k)=10k+5$. On the other hand, since $\mathrm{pn}(D_0)=4$, we have $\mathrm{pn}(G_k)\geq 4(4k+2)-(4k+1)=12k+7$, which completes the proof.    
	\end{proof}
	
	\no We note that the girth condition in \Cref{theorem:LARGE-GIRTH} is unlikely to be optimal, although \Cref{prop:large-degree} shows that we cannot drop the girth condition altogether. It would be interesting to try and improve the girth condition in Theorem~\ref{theorem:LARGE-GIRTH} perhaps even down to a constant independent of the maximum degree. \\


	\no 
	Of course Conjecture~\ref{conjecture:Regular} remains wide open. We have verified the conjecture in a sparse setting by imposing girth conditions on our graph. 
	Another direction for the sparse setting is to consider small values of the degree. 
	Recall that Conjecture~\ref{conjecture:Regular} is known to be true for $d=3$~\cite{CUBIC} using an inductive argument. However, the method does not seem to extend well to the case $d=5$, as the complexity of the structural analysis increases dramatically. A better understanding of the case $d=5$ could provide more insights about the sparse setting.  \\
	

	\no 
	Conjecture~\ref{conjecture:Regular} is also wide open in the dense setting with the only solved case being that of the complete graph (as far as we are aware), and this required considerable effort relying heavily on the robust expansion method. It would be interesting to determine whether $K_{2n+1,2n+1}$ is strongly consistent.
	


\begin{thebibliography}{99}
		\bibitem{Arboricity} J. Akiyama, G. Exoo, and F. Harary, ``Covering and packing in graphs iii: Cyclic and acyclic invariants", \textit{Mathematica Slovaca} \textbf{30:4} (1980), 405--417.
		
		\bibitem{Directed-Arboricity-Alon} N. Alon,	``Probabilistic methods in coloring and decomposition problems", \textit{Discrete Mathematics}, \textbf{127(1--3)} (1994), 31--46.
		
		\bibitem{Alon-Arboricity} N. Alon, ``The linear arboricity of graphs", \textit{Israel J. of Mathematics}, \textbf{62 (3)} (1988), 311--325.
		
		\bibitem{Directed-Path-Tournaments} B. R. Alspach, D. W. Mason, and N. J. Pullman, ``Path numbers of tournaments", \textit{J. Combin. Theory B}, \textbf{20 (3)} (1976), 222--228.
		
		\bibitem{Directed-Path-Decomposition} B. R. Alspach and N. J. Pullman, ``Path decompositions of digraphs", \textit{Bulletin of the Australian Mathematical Society}, \textbf{10 (3)} (1974), 421--427.
		
		
		
		\bibitem{Gallai-planar} A. Blanché, M. Bonamy, and N. Bonichon, ``Gallai’s path decomposition in planar graphs", \textit{preprint},
		arXiv:2110.08870 (2021). 
		
		\bibitem{BOLLOBAS} B. Bollobás, ``A probabilistic proof of an asymptotic formula for the number of labelled graphs", \textit{European J. on Combinatorics} \textbf{1} (1980), 311--316.
		
		\bibitem{BOOK-BOLLOBAS} B. Bollobás, ``Random Graphs", 2nd ed. Cambridge: Cambridge University Press (2001).
		
		\bibitem{Towards-Erdos-Gallai} M. Bucić and R. Montgomery, ``Towards the Erdős-Gallai cycle decomposition conjecture", \textit{Advances in Mathematics}, \textbf{437} (2024), 109434.
		
		\bibitem{Conlon-Fox-Sudakov} D. Conlon, J. Fox, and B. Sudakov. ``Cycle packing", \textit{Random Struct. Algorithms}, \textbf{45(4)} (2014), 608--626. 
		
		\bibitem{NEW-LINEAR-ARBORICITY}  {M. Christoph, N. Draganić, A. Girão, E. Hurley, L. Michel, and A. Müyesser, ``New bounds for linear arboricity and related problems", \textit{preprint}, arXiv:2507.20500 (2025).} 
		
		\bibitem{DEAN-DECOMPOSITION} N. Dean, ``What is the smallest number of dicycles in a dicycle decomposition of an Eulerian digraph?", J. Graph Theory, \textbf{10(3)} (1986), 299--308.
		
		\bibitem{Gallai-2n-over-3-first} N. Dean and M. Kouider, ``Gallai’s conjecture for disconnected graphs", \textit{Discrete Mathematics}, \textbf{213(1--3)} (2000), 43--54.
		
		
		\bibitem{Erdos-Gallai} P. Erdős, A. W. Goodman, and L. Pósa, ``The representation of a graph by set intersections", \textit{Canadian Journal of Mathematics}, \textbf{18} (1966), 106--112.
		
		\bibitem{Tournaments} A. Girão, B. Granet, D. Kühn, A. Lo, and D. Osthus, ``Path decompositions of tournaments", \textit{Proc. London Math. Soc.}, \textbf{126} (2023), 429--517.
		
		\bibitem{Directed-Arboricity-Complete} W.H. He, H. Li, Y. D. Bai, and Q. Sun, ``Linear arboricity of regular digraphs", \textit{Acta. Math. Sin.-English Ser.} \textbf{33} (2017), 501--508.
		
		\bibitem{KOTZIG} A. Kotzig, ``Aus der Theorie der endlichen regul{\"a}ren Graphen dritten und vierten Grades", \textit{Casopis Pest. Mat}, \textbf{82} (1957), 76--92.
		
		\bibitem{EULERIAN-DIRECTED-LOG-DELTA} C. Knierim, M. Larcher, A. Martinsson, and A. Noever. ``Long cycles, heavy cycles and cycle decompositions in digraphs", \textit{J. Combin. Theory B}, \textbf{148} (2021), 125--148.
		
		\bibitem{JACKSON-DECOMPOSITION} B. Jackson, ``Decompositions of graphs into cycles", \textit{Regards sur la Th{\'e}orie des Graphes}, 1980, 259--261.
		
		
		
		\bibitem{Kelly-Proof} D. Kühn and D. Osthus, ``Hamilton decompositions of regular expanders: a proof of Kelly’s conjecture for large tournaments", \textit{Advances in Mathematics}, \textbf{237} (2013), 62--146.
		
		
		
		\bibitem{Arboricity-Lang-Postle} R. Lang and L. Postle, ``An improved bound for the linear arboricity conjecture", \textit{Combinatorica} \textbf{43} (2023), 547--569.
		
		\bibitem{Tournaments-Viresh} A. Lo, V. Patel, J. Skokan, and J. Talbot, ``Decomposing tournaments into paths", \textit{Proc. London Math. Soc.}, \textbf{121 (2)} (2020), 426--461.
		
		\bibitem{Gallai} L. Lovász, ``On covering of graphs", In \textit{Theory of Graphs (Proc. Colloq., Tihany, 1966)}, Academic Press (1968), 231--236.
		
		\bibitem{Walecki} É. Lucas, ``Récréations mathématiques", \textbf{2}, \textit{Gauthier-Villars}, (1883).
		
		\bibitem{ALP} R. Montgomery, A. Müyesser, A. Pokrovskiy, and B. Sudakov, ``Approximate path decompositions of regular graphs",  {\textit{J. London Math. Soc.}, \textbf{112(2)} (2025), e70269.}
		
		\bibitem{Directed-Arboricity-First} A. Nakayama and B. Peroch\'e, ``Linear arboricity of digraphs", \textit{Networks}, \textbf{17} (1987), 39--53.
		
		
		\bibitem{Path-Decomposition-Upper-Bound} R. C. O'Brien, ``An upper bound on the path number of a digraph", \textit{J. Combin. Theory B}, \textbf{22 (2)} (1977), 168--174. 
		
		\bibitem{NP-complete-paper} B. Peroch\'e, ``NP-completeness of some problems of partitioning and covering in graphs", \textit{Discrete Applied Mathematics}, \textbf{8 (2)} (1984), 195--208.
		
		\bibitem{CUBIC} K. B. Reid and K. Wayland, ``Minimum path decompositions of oriented cubic graphs", \textit{J. Graph Theory}, \textbf{11} (1987), 113--118.
		
		\bibitem{LocalLemma} J. B. Shearer, ``On a problem of Spencer", \textit{Combinatorica}, \textbf{5} (1985), 241--245.
		
		
		\bibitem{NP-complete-thesis} T. de Vos, ``Decomposing directed graphs into paths", Master's thesis, Universiteit van Amsterdam (2020).
		
		\bibitem{Gallai-2n-over-3-second} L. Yan, ``On path decompositions of graphs", PhD thesis, Arizona State University, 1998.
		
		
		
		
		
		
		
		
		
		
		
		
	\end{thebibliography}
\end{document}